%
%
%
%
\documentclass{article}
%
\usepackage{graphicx} 
\usepackage{amsfonts,amsmath,amssymb,amsthm}
\usepackage{stmaryrd}
\usepackage{fullpage}
\usepackage[linesnumbered,algoruled,boxed]{algorithm2e}
\usepackage{comment}
\usepackage{hyperref}
\usepackage{nicefrac}

\newtheorem{theorem}{Theorem}[section]
\newtheorem{lemma}[theorem]{Lemma}
\newtheorem{proposition}[theorem]{Proposition}
\newtheorem{assumption}[theorem]{Assumption}
\newtheorem{remark}[theorem]{Remark}

\newcommand{\dx}{\,\mathrm{d}x}

\providecommand{\keywords}[1]{\par\vspace{1ex}\noindent\textbf{Keywords:} #1\par\vspace{1ex}}

\usepackage{pgfplots}
\usepgfplotslibrary{groupplots}
\pgfplotsset{compat=newest}
\usepgfplotslibrary{external}
\usepgfplotslibrary{colorbrewer}
\newcommand{\logLogSlopeTriangle}[6]
{

    \pgfplotsextra
    {
        \pgfkeysgetvalue{/pgfplots/xmin}{\xmin}
        \pgfkeysgetvalue{/pgfplots/xmax}{\xmax}
        \pgfkeysgetvalue{/pgfplots/ymin}{\ymin}
        \pgfkeysgetvalue{/pgfplots/ymax}{\ymax}

        \pgfmathsetmacro{\xArel}{#1}
        \pgfmathsetmacro{\yArel}{#3}
        \pgfmathsetmacro{\xBrel}{#1-#2}
        \pgfmathsetmacro{\yBrel}{\yArel}
        \pgfmathsetmacro{\xCrel}{\xArel}

        \pgfmathsetmacro{\lnxB}{\xmin*(1-(#1-#2))+\xmax*(#1-#2)} 
        \pgfmathsetmacro{\lnxA}{\xmin*(1-#1)+\xmax*#1} 
        \pgfmathsetmacro{\lnyA}{\ymin*(1-#3)+\ymax*#3} 
        \pgfmathsetmacro{\lnyC}{\lnyA+#4*(\lnxA-\lnxB)}
        \pgfmathsetmacro{\yCrel}{\lnyC-\ymin)/(\ymax-\ymin)} 

        \coordinate (A) at (rel axis cs:\xArel,\yArel);
        \coordinate (B) at (rel axis cs:\xBrel,\yCrel);
        \coordinate (C) at (rel axis cs:\xCrel,\yCrel);

        \draw[#5]   (A)--
                    (B)-- 
                    (C)-- node[pos=0.5,anchor=west] {#6}
                    cycle;
    }
}

\newcommand{\logLogSlopeTriangleBelow}[6]
{

    \pgfplotsextra
    {
        \pgfkeysgetvalue{/pgfplots/xmin}{\xmin}
        \pgfkeysgetvalue{/pgfplots/xmax}{\xmax}
        \pgfkeysgetvalue{/pgfplots/ymin}{\ymin}
        \pgfkeysgetvalue{/pgfplots/ymax}{\ymax}

        \pgfmathsetmacro{\xArel}{#1}
        \pgfmathsetmacro{\yArel}{#3}
        \pgfmathsetmacro{\xBrel}{#1-#2}
        \pgfmathsetmacro{\yBrel}{\yArel}
        \pgfmathsetmacro{\xCrel}{\xArel}

        \pgfmathsetmacro{\lnxB}{\xmin*(1-(#1-#2))+\xmax*(#1-#2)} 
        \pgfmathsetmacro{\lnxA}{\xmin*(1-#1)+\xmax*#1} 
        \pgfmathsetmacro{\lnyA}{\ymin*(1-#3)+\ymax*#3} 
        \pgfmathsetmacro{\lnyC}{\lnyA+#4*(\lnxA-\lnxB)}
        \pgfmathsetmacro{\yCrel}{\lnyC-\ymin)/(\ymax-\ymin)} 

        \coordinate (A) at (rel axis cs:\xArel,\yArel);
        \coordinate (B) at (rel axis cs:\xBrel,\yCrel);
        \coordinate (C) at (rel axis cs:\xBrel,\yArel);

        \draw[#5]   (A)--
                    (B)-- node[pos=0.5,anchor=east] {#6}
                    (C)-- 
                    cycle;
    }
}
\title{Adaptive finite element method for an unregularized semilinear optimal control problem
\thanks{This project has received funding by the Federal Ministry of Education and Research (BMBF) and the Baden-Württemberg Ministry of Science as part of the Excellence Strategy of the German Federal and State Governments. In addition, the first author has been supported by ANID through FONDECYT postdoctoral project 3230126.}}
\author{Francisco Fuica
\thanks{Facultad de Matem\'aticas, Pontificia Universidad Cat\'olica de Chile, Avenida Vicu\~{n}a Mackenna 4860, Santiago, Chile.}
\thanks{Zukunftskolleg, Konstanz University, Universit\"atsstra{\ss}e 10, 78464 Konstanz, Germany. email \texttt{francisco.fuica@uc.cl}}
\and{Nicolai Jork\thanks{Department of Mathematics, Eberhard-Karls-Universit\"at T\"ubingen, D-72076 T\"ubingen, Germany. email \texttt{nicolai.jork@uni-tuebingen.de}}
}}

\date{\today}
%

%
%
\begin{document}

\maketitle

\begin{abstract}
We devise an a posteriori error estimator for an affine optimal control problem subject to a semilinear elliptic PDE and control constraints.
To approximate the problem, we consider a semidiscrete scheme based on the variational discretization approach.
For this solution technique, we design an a posteriori error estimator that accounts for the discretization of the state and adjoint equations, and prove, under suitable local growth conditions of optimal controls, reliability and efficiency properties of such error estimator. 
A simple adaptive strategy based on the devised estimator is designed and its performance is illustrated with numerical examples.
\end{abstract}

%
\keywords{optimal control, bang-bang control, convergence, semilinear elliptic equations, error estimates, a posteriori analysis.}


\section{Introduction}
This work aims to design and analyze a posteriori error estimates for an affine optimal control problem governed by a semilinear elliptic partial differential equation (PDE); bilateral control constraints are also considered.
The affine structure of the objective functional produces several difficulties, which are further increased by the nonlinearity of the constraining equation.
To be precise, in contrast to the setting where the objective functional includes the classical Tikhonov regularization term $\nicefrac{\lambda}{2}\|u\|_{L^{2}(\Omega)}^{2}$ with $\lambda > 0$, the affine structure does not incorporate such a term, making the analysis of second-order sufficient optimality conditions more involved.
We note that the available formulations of these conditions in the literature (see, e.g., \cite{CDJ2023,MR4791221}) are considerably weaker compared to the regularized case.
As a result, standard techniques, which rely on strong second-order sufficient optimality conditions, used to derive a posteriori error estimates for optimal control problems with nonlinear constraints cannot be directly applied.

The study of affine optimal control problems has its origin in optimal control problems subject to ordinary differential equations (ODEs). In this context, several works have addressed the stability of these problems, employing tools from Variational Analysis.
For some works in which the stability and the error estimation of the numerical approximation played a major role, we refer to \cite{MR3772055, MR4352916, MR1972537, MR3484461, MR4144113, MR3956959}. 
More recently, the study of affine optimal control problems with PDE constraints has gained considerable attention. 
In particular, the study of bang-bang optimal controls in the context of elliptic equations can be found in \cite{zbMATH06111099, zbMATH06797168, zbMATH06982426, CDJ2023, MR4525177, zbMATH07865499} and the references therein.
For results on numerical approximation and error analysis of these problems, particularly without the use of regularization strategies, we refer the reader to \cite{MR2891922, MR4791221, Fuica2024, CM2021}.

A widely used method for approximating solutions to optimal control problems--and PDEs in general--is the class of adaptive finite element methods (AFEMs). 
These are iterative methods that assess the discretization error after computation and guide adaptive mesh refinement by identifying regions where the solution is more difficult to approximate. 
This strategy allows maintaining an optimal distribution of computational resources, typically measured in terms of degrees of freedom. 
At the core of these methods we find a posteriori error estimators, which provide both global and local information on the error of discrete solutions and that can be easily computed from the numerical solution and given problem data.
A key component of a posteriori error analysis is establishing the \emph{reliability} and \emph{efficiency} of such estimators. 
Reliability ensures that the estimator does not underestimate the true error, while efficiency guarantees that the estimator is proportional to the actual error, avoiding significant overestimation \cite{MR3059294}. 
Together, these properties ensure that the estimator offers a meaningful measure of the accuracy of the numerical solution. 
The use of these methods in the context of affine optimal control problems is rather scarce; there exist only a few works on this matter \cite{Fuica2024,MR3095657,MR3385653}, all focusing on affine optimal control problems governed by the Poisson equation.
In \cite{MR3095657}, the author investigated a simultaneous Tikhonov regularization and discretization of bang-bang control problems, developing a parameter choice rule that adaptively selects the Tikhonov regularization parameter based on a posteriori computable quantities. 
However, the error estimates were not robust with respect to $\lambda > 0$.
This was improved in \cite{MR3385653}, in which robust global reliability estimates were provided.
We note that efficiency estimates were not provided in \cite{MR3095657,MR3385653} for the corresponding error estimators.
We conclude this paragraph by mentioning the very recent work \cite{Fuica2024}, in which reliable and efficient a posteriori error estimates were proved for a tracking-type optimal control problem without regularization terms. 

To the best of our knowledge, this is the first work to study a posteriori error estimates for an unregularized affine optimal control problem involving a semilinear elliptic PDE. 
To approximate optimal variables, we consider the variational discretization approach introduced in \cite{MR2122182}.
More precisely, we discretize optimal state and adjoint state variables using standard continuous piecewise linear functions, while the control variable is not discretized.
Since we are concerned with unregularized problems, we do not consider a Tikhonov regularization.
This allows us to avoid dealing with discretization and regularization errors simultaneously, eliminating the need to choose a suitable regularization parameter for each mesh; cf. \cite{MR3385653}.
However, this also means that we cannot use the standard approach revolving around the second-order sufficient optimality conditions as strong as in the regularized situation.
Within such an unregularized framework, we develop a residual-based a posteriori error estimator consisting of only two contributions, corresponding to the discretization errors in the state and adjoint equations. 
We shall assume the growth condition \eqref{eq:assumption_control_growth} and that the approximation error $\|\bar{u} - \bar{\mathfrak{u}}_{\ell}\|_{L^{1}(\Omega)}$ is smaller than an appropriate constant; here, $\bar{u}$ denotes a locally optimal control and $\bar{\mathfrak{u}}_{\ell}$ denotes a suitable approximation of $\bar{u}$.
Under these assumptions, we prove reliability and efficiency properties for the proposed error estimator in Theorems \ref{thm:global_rel_ocp} and \ref{thm:efficiency_est_ocp}, respectively, in two- and three-dimensional convex polygonal/polyhedral domains.
The analysis primarily relies on assumption \eqref{eq:assumption_control_growth}, a weaker form of optimality conditions designed for optimal control problems where the controls are expected to exhibit a bang-bang structure, which is typical of unregularized affine problems.

To precise the model under investigation, we let $\Omega\subset\mathbb{R}^d$ ($d\in \{2, 3\}$) be an open, bounded, and convex polygonal/polyhedral domain with boundary $\partial\Omega$ and let $y_\Omega\in L^2(\Omega)$ be a given desired state.
We thus define the tracking-type cost functional
\begin{align*}
J(u):=\frac{1}{2}\|y_{u} - y_{\Omega}\|_{\Omega}^{2},
\end{align*} 
and consider the following optimal control problem:
\begin{equation}\label{def:opt_cont_prob}
\min J(u) ~ \text{ subject to } ~ u\in U_{ad} ~ \text{ and } ~ -\Delta y_{u} + f(\cdot,y_u)  =  u ~  \text{ in }  \Omega, \quad y_u  =  0 ~ \text{ on }  \partial\Omega,
\end{equation}
where the set of \emph{admissible} controls $U_{ad}$ is given by
\[
U_{ad}:=\{v \in L^2(\Omega):  a \leq v(x) \leq b ~ \text{ for almost all (f.a.a.) } x \in \Omega \},
\]
with $a,b\in \mathbb{R}$ being such that $a<b$.
Assumptions on the nonlinear term $f$ will be deferred until Section \ref{sec:not_and_prel}.
We note that the PDE-constrained optimization problem \eqref{def:opt_cont_prob} entails minimizing a cost functional in which the cost of the control is negligible.

We organize the remainder of the manuscript as follows. 
Section \ref{sec:not_and_prel} establishes the notation and preliminary results related to the PDEs involved in our problem. 
In Section \ref{sec:OCP}, we introduce a weak formulation for the problem \eqref{def:opt_cont_prob} and present first- and second-order optimality conditions. 
The core of our work is Section \ref{sec:a_post_analysis}, in which we propose a semidiscrete approximation scheme, design for it a residual-type posteriori error estimator, and prove reliability and efficiency properties of such estimator.
We conclude with Section \ref{sec:num_ex}, where we display a number of numerical examples that illustrate the theory and exhibit a competitive performance of the devised AFEM.


\section{Notation and preliminaries}\label{sec:not_and_prel}


\subsection{Notation}
Throughout this work, we use standard notation for Lebesgue and Sobolev spaces and their corresponding norms. 
In particular, the space $H_0^{1}(\Omega)$ denotes the closure of $C_0^{\infty}(\Omega)$ under $\|\nabla \cdot\|_{L^{2}(\Omega)}$.
The dual space of $H_0^{1}(\Omega)$ is denoted by $H^{-1}(\Omega)$.
Given an open and bounded domain $\omega$, we denote by $(\cdot,\cdot)_{\omega}$ and $\| \cdot \|_{\omega}$ the inner product and norm of $L^{2}(\omega)$, respectively.
If $\mathcal{X}$ and $\mathcal{Y}$ are Banach function spaces, we write $\mathcal{X}\hookrightarrow \mathcal{Y}$ to denote that $\mathcal{X}$ is continuously embedded in $\mathcal{Y}$. 
The relation $\mathfrak{a} \lesssim \mathfrak{b}$ indicates that $\mathfrak{a} \leq C \mathfrak{b}$, with a positive constant that depends neither on $\mathfrak{a}$, $\mathfrak{b}$ nor on the discretization parameters. 
The value of the constant $C$ may vary from one occurrence to another.


\subsection{Results on the involved PDEs}

For completeness, in this section, we collect properties of solutions to linear and semilinear elliptic PDEs. 
We notice that in \cite{CDJ2023}, the results are obtained for a non-monotone and non-coercive semilinear elliptic PDE. 
Since the PDE considered in this work can be seen as a special case, the results in \cite{CDJ2023} apply.


\subsubsection{Linear elliptic PDE}
Let $a_0 \in L^\infty(\Omega)$ be a nonnegative function and let $z\in L^{2}(\Omega)$. We consider the equation
\begin{equation*}
    -\Delta \varphi_{z} +a_0 \varphi_{z} = z \text{ in }\Omega, \quad \varphi_z = 0 \text{ on } \partial\Omega.
\end{equation*}
Its variational formulation is given by: Find $\varphi_{z}\in H_0^{1}(\Omega)$ such that
\begin{align}\label{eq:weak_st_lin}
(\nabla \varphi_{z},\nabla v)_{\Omega} + (a_0\varphi_z,v)_{\Omega} = (z,v)_{\Omega} \quad \forall v \in H_0^1(\Omega).
\end{align}

We now present some properties for $\varphi_{z}$. 
We start with the following result; see, e.g., \cite[Lemma 2.2]{CDJ2023} and \cite[Theorems 3.2.1.2]{MR775683}.

\begin{lemma}[well-posedness and stability]\label{thm:wp_and_stab_linear}
    Let $z\in L^r(\Omega)$ with $r>d/2$. 
    Then, the linear equation \eqref{eq:weak_st_lin} has a unique solution $\varphi_{z}\in H^1_0(\Omega)\cap C(\bar \Omega)$.
    In addition, it holds that
\begin{equation}\label{solubound}
\|\varphi_z\|_{H^1_0(\Omega)}+\|\varphi_z\|_{L^{\infty}(\Omega)}\leq C_r\| z\|_{L^r(\Omega)},
\end{equation}
where $C_r>0$ is independent of $a_0$ and $z$. 
Moreover, if $\Omega$ is convex, then $\varphi_z\in H^1_0(\Omega)\cap H^2(\Omega)$ and
\begin{equation*}
\|\varphi_z\|_{H^2(\Omega)}
\lesssim
\|z\|_{\Omega}.
\end{equation*}

\label{estlincont}
\end{lemma}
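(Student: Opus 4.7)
The plan is to assemble three standard pieces: a Lax--Milgram argument for existence and the $H^1_0$ bound, a Stampacchia truncation argument for the $L^\infty$ bound with a constant independent of $a_0$, and a quotation of Grisvard's $H^2$-regularity result on convex domains.

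First I would handle existence, uniqueness and the $H^1_0$ estimate at once. The bilinear form $a(\varphi,v):=(\nabla\varphi,\nabla v)_\Omega + (a_0\varphi,v)_\Omega$ is bilinear and continuous on $H^1_0(\Omega)\times H^1_0(\Omega)$ by $a_0\in L^\infty(\Omega)$, and coercive because $a_0\ge 0$ gives $a(\varphi,\varphi)\ge \|\nabla\varphi\|_\Omega^2$, which controls $\|\varphi\|_{H^1_0(\Omega)}^2$ by Poincar\'e. Since $z\in L^r(\Omega)\hookrightarrow H^{-1}(\Omega)$ for $r>d/2$, Lax--Milgram yields a unique $\varphi_z\in H^1_0(\Omega)$. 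Testing \eqref{eq:weak_st_lin} with $\varphi_z$ and using $a_0\ge 0$ together with H\"older and Poincar\'e delivers $\|\varphi_z\|_{H^1_0(\Omega)}\lesssim \|z\|_\Omega\lesssim \|z\|_{L^r(\Omega)}$ with a constant independent of $a_0$.

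The main obstacle is the $L^\infty$-bound with constant independent of $a_0$; this is exactly where the hypothesis $r>d/2$ enters. I would carry this out by a Stampacchia truncation argument: for $k\ge 0$, test \eqref{eq:weak_st_lin} with $(\varphi_z-k)_+\in H^1_0(\Omega)$ (the sign of $a_0\varphi_z(\varphi_z-k)_+\ge -k a_0(\varphi_z-k)_+$ can be handled, but a cleaner route is to test with $\operatorname{sign}(\varphi_z)(|\varphi_z|-k)_+$ so the $a_0$ term becomes nonnegative and disappears). On the level set $A(k):=\{|\varphi_z|>k\}$ we then obtain
\begin{equation*}
\int_{A(k)}|\nabla(|\varphi_z|-k)_+|^2\,\mathrm{d}x
\le \int_{A(k)}|z|\,(|\varphi_z|-k)_+\,\mathrm{d}x,
\end{equation*}
and Sobolev, H\"older with $r>d/2$, and Stampacchia's level-set lemma give a nonlinear recursion that implies $\|\varphi_z\|_{L^\infty(\Omega)}\le C_r\|z\|_{L^r(\Omega)}$ with $C_r$ depending only on $\Omega$, $d$ and $r$. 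Continuity $\varphi_z\in C(\bar\Omega)$ then follows from De Giorgi--Nash--Moser-type local H\"older regularity for $L^r$ right-hand sides, again valid since $r>d/2$. An alternative, shorter route is to introduce $\psi\in H^1_0(\Omega)$ with $-\Delta\psi=|z|$ and use the weak maximum principle on the equation satisfied by $\psi-|\varphi_z|$ (here $a_0\ge 0$ is crucial) to conclude $|\varphi_z|\le\psi$ pointwise, then apply the classical Stampacchia $L^\infty$-estimate to $\psi$.

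Finally, for the $H^2$ bound on convex polygonal/polyhedral $\Omega$, I would rewrite \eqref{eq:weak_st_lin} as the pure Poisson problem $-\Delta\varphi_z = z-a_0\varphi_z$ with zero Dirichlet data. The right-hand side is in $L^2(\Omega)$ since $a_0\in L^\infty$ and $\varphi_z\in L^2(\Omega)$, so Grisvard's theorem \cite[Theorem~3.2.1.2]{MR775683} applies and gives
\begin{equation*}
\|\varphi_z\|_{H^2(\Omega)}\lesssim \|z-a_0\varphi_z\|_\Omega\lesssim \|z\|_\Omega+\|a_0\|_{L^\infty(\Omega)}\|\varphi_z\|_\Omega.
\end{equation*}
Combining with the previously derived $\|\varphi_z\|_\Omega\lesssim\|z\|_\Omega$ (independent of $a_0$) absorbs the lower-order term and yields $\|\varphi_z\|_{H^2(\Omega)}\lesssim \|z\|_\Omega$, completing the proof.
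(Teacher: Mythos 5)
Your argument is correct and follows essentially the same route as the paper, which does not prove this lemma itself but simply cites \cite[Lemma 2.2]{CDJ2023} and \cite[Theorem 3.2.1.2]{MR775683}: Lax--Milgram with coercivity from $a_0\ge 0$ for existence and the $H^1_0$ part of \eqref{solubound}, a Stampacchia truncation (the $a_0$-term discarded by sign, which is exactly how the $a_0$-independence of $C_r$ is obtained) for the $L^\infty$ bound with $r>d/2$, De Giorgi--Nash--Moser for continuity up to the boundary, and Grisvard's convexity result for the $H^2$ estimate. One precision: your bootstrap $-\Delta\varphi_z=z-a_0\varphi_z$ yields $\|\varphi_z\|_{H^2(\Omega)}\le C\bigl(1+\|a_0\|_{L^\infty(\Omega)}\bigr)\|z\|_{\Omega}$, so the lower-order term is bounded but its $\|a_0\|_{L^\infty(\Omega)}$ factor is not ``absorbed''; the resulting $H^2$ constant therefore does depend on $\|a_0\|_{L^\infty(\Omega)}$, which is consistent with the statement as written (only $C_r$ in \eqref{solubound} is claimed to be independent of $a_0$), but it is worth saying explicitly since the lemma is later invoked with $a_0=\frac{\partial f}{\partial y}(\cdot,y_\theta)$ varying with the mesh level.
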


For the proof of the next result, we refer the reader to \cite[Lemma 2.3]{CDJ2023}.

\begin{lemma}[stability in $L^{s}(\Omega)$]\label{lemma:stab_Ls}
Let $a_0 \in L^\infty(\Omega)$ be a nonnegative function.
Assume that $s \in [1,\frac{d}{d - 2})$ and that $s'$ is its conjugate. 
Then, there exists a positive constant $C_{s'}$ independent of $a_0$ such that
\begin{equation*}
\|\varphi_{z}\|_{L^s(\Omega)} \le C_{s'}\|z\|_{L^1(\Omega)} \qquad \forall z \in H^{-1}(\Omega) \cap L^1(\Omega),
\end{equation*}
where $C_{s'}$ is given by \eqref{solubound} with $r = s'$.
\label{L2.2}
\end{lemma}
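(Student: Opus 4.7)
The natural approach is an $L^{s}$--$L^{s'}$ duality argument, combined with Lemma \ref{thm:wp_and_stab_linear} applied to the (formally) adjoint problem. Since the bilinear form defining \eqref{eq:weak_st_lin} is symmetric, the operator is self-adjoint, which keeps the argument clean.

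First, I would write
\[
\|\varphi_{z}\|_{L^{s}(\Omega)} = \sup_{g \in L^{s'}(\Omega),\ \|g\|_{L^{s'}(\Omega)} = 1}\, \int_{\Omega} \varphi_{z}\, g \dx,
\]
and, for each such $g$, introduce $\psi_{g} \in H_{0}^{1}(\Omega)$ as the unique solution of
\[
(\nabla \psi_{g}, \nabla v)_{\Omega} + (a_{0} \psi_{g}, v)_{\Omega} = (g, v)_{\Omega} \qquad \forall v \in H_{0}^{1}(\Omega).
\]
The key observation, which also pins down the admissible range of $s$, is that $s \in [1, d/(d-2))$ is equivalent to $s' > d/2$ (with $s' = \infty$ when $s = 1$). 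Hence Lemma \ref{thm:wp_and_stab_linear} applies to $\psi_{g}$ with $r = s'$ and yields $\psi_{g} \in H_{0}^{1}(\Omega) \cap C(\bar\Omega)$ together with
\[
\|\psi_{g}\|_{L^{\infty}(\Omega)} \le C_{s'} \|g\|_{L^{s'}(\Omega)},
\]
where $C_{s'}$ is independent of $a_{0}$.

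Next, I would exploit symmetry of the bilinear form: testing \eqref{eq:weak_st_lin} against $\psi_{g}$ and the auxiliary problem against $\varphi_{z}$ gives
\[
\int_{\Omega} \varphi_{z}\, g \dx = (\nabla \varphi_{z}, \nabla \psi_{g})_{\Omega} + (a_{0} \varphi_{z}, \psi_{g})_{\Omega} = \langle z, \psi_{g} \rangle,
\]
where $\langle \cdot, \cdot\rangle$ is the duality pairing between $H^{-1}(\Omega)$ and $H_{0}^{1}(\Omega)$. Since $z \in L^{1}(\Omega)$ and $\psi_{g} \in L^{\infty}(\Omega)$, this pairing coincides with $\int_{\Omega} z\,\psi_{g}\dx$, and can therefore be bounded by $\|z\|_{L^{1}(\Omega)}\|\psi_{g}\|_{L^{\infty}(\Omega)}$. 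Inserting the $L^{\infty}$ bound on $\psi_{g}$ and taking the supremum over $g$ yields the claim with the same constant $C_{s'}$ from \eqref{solubound}.

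The main delicate point is precisely the identification $\langle z, \psi_{g}\rangle = \int_{\Omega} z\,\psi_{g}\dx$, which is what forces the $L^{\infty}$ bound on $\psi_{g}$ and hence the restriction $s' > d/2$, i.e., $s < d/(d-2)$; once $\psi_{g}$ is known to be bounded, everything else is standard duality and the symmetry trick. Other routine checks (uniqueness of $\psi_{g}$, density to justify that both test function choices are admissible) are guaranteed by Lemma \ref{thm:wp_and_stab_linear} and the hypotheses on $a_{0}$ and $z$.
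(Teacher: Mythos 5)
Your duality (transposition) argument is correct: the equivalence $s<\tfrac{d}{d-2}\Leftrightarrow s'>\tfrac{d}{2}$ is exactly what lets Lemma \ref{thm:wp_and_stab_linear} produce the $a_0$-independent $L^\infty$ bound on $\psi_g$, and the symmetry of the form then gives $\int_\Omega \varphi_z g\dx=\langle z,\psi_g\rangle\le C_{s'}\|z\|_{L^1(\Omega)}\|g\|_{L^{s'}(\Omega)}$. The paper itself gives no proof (it defers to \cite[Lemma 2.3]{CDJ2023}), but this is precisely the standard transposition argument used there, and you correctly flag the only delicate step, the identification $\langle z,\psi_g\rangle=\int_\Omega z\,\psi_g\dx$, which is justified by approximating $\psi_g\in H_0^1(\Omega)\cap C(\bar\Omega)$ by smooth functions converging in $H_0^1(\Omega)$ and boundedly pointwise.
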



\subsubsection{Semilinear elliptic PDE}
We begin by introducing a variational formulation for the semilinear elliptic state equation in \eqref{def:opt_cont_prob}: Given $u\in L^{2}(\Omega)$ find $y_{u}\in H_0^{1}(\Omega)$ such that
\begin{align}\label{eq:weak_st_eq2}
(\nabla y_{u},\nabla v)_{\Omega} + (f(\cdot,y),v)_{\Omega} = (u,v)_{\Omega} \quad \forall v \in H_0^1(\Omega).
\end{align}
We assume that the nonlinear function $f$ satisfies the following assumptions:
\begin{assumption}\label{A1}
We assume that $f:\Omega \times \mathbb{R}\longrightarrow \mathbb{R}$ is a Carath\'eodory function of class $C^2$ with respect to the second variable satisfying:
\begin{align*}
\left\{\begin{array}{l}
f(\cdot,0)\in L^\infty(\Omega) \text{ and } \frac{\partial f}{\partial y}(x,y) \geq 0 \  \forall y \in \mathbb{R},
\vspace{1mm}\\
\displaystyle \forall M>0\ \exists C_{f,M}>0 \text{ s.t. }\left\vert\frac{\partial f}{\partial y}(x,y)\right\vert+
\left\vert\frac{\partial^2 f}{\partial y^2}(x,y)\right\vert \leq C_{f,M} \ \forall \vert y\vert \leq M,
\vspace{1mm}\\
\displaystyle \forall \rho > 0 \text{ and } \forall M>0\ \exists\ \epsilon > 0 \text{ such that } \\\displaystyle
\left\vert\frac{\partial^2f}{\partial y^2}(x,y_2) - \frac{\partial^2f}{\partial y^2}(x,y_1)\right\vert < \rho \ \forall \vert y_1\vert, \vert y_2\vert \leq M \mbox{ with }  \vert y_2 - y_1\vert \le \epsilon, \end{array}\right.
\end{align*}
for almost every $x \in \Omega$.
\end{assumption}

The following theorem guarantees existence and uniqueness of a weak solution for \eqref{eq:weak_st_eq2}, and regularity properties for such a solution; see, e.g., \cite[Sections 2.1 and 3.1]{MR2009948}.

\begin{theorem}[well-posedness and regularity]\label{contandregsem}
Let Assumption \ref{A1} hold. For every $u \in L^r(\Omega)$ with $r > d/2$ there exists a unique $y_u \in
H^1_0 (\Omega)\cap C(\bar  \Omega)$ solution of \eqref{eq:weak_st_eq2}. Moreover, there exists a constant $T_r > 0$ independent of $u$ such that
\begin{equation*}
    \| y_u\|_{H^1_0(\Omega)}+\|y_u\|_{L^{\infty}(\Omega)}\leq T_r(\|u\|_{L^r(\Omega)}+\|f(\cdot,0)\|_{L^\infty(\Omega)}).
\end{equation*}
If $u_k\rightharpoonup u$ weakly in $L^r(\Omega)$, then we have the strong convergence 
\begin{equation*}
    \| y_{u_k}-y_u\|_{H^1_0(\Omega)}+\|y_{u_k}-y_u\|_{L^{\infty}(\Omega)}\to 0.
\end{equation*}
\end{theorem}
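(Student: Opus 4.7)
The plan is to establish the three claims in sequence---well-posedness, the stability bound, and the weak-to-strong convergence---exploiting the monotonicity $\frac{\partial f}{\partial y} \geq 0$ from Assumption \ref{A1} and the linear theory of Lemma \ref{thm:wp_and_stab_linear}. For existence and uniqueness, I would define $T\colon H_0^1(\Omega) \to H^{-1}(\Omega)$ by $\langle Ty, v\rangle := (\nabla y, \nabla v)_\Omega + (f(\cdot, y), v)_\Omega$; Assumption \ref{A1} renders $T$ strictly monotone, hemicontinuous, and coercive (the last via Poincar\'e's inequality and $f(\cdot, 0) \in L^\infty(\Omega)$), so Browder--Minty yields a unique $y_u \in H_0^1(\Omega)$ solving \eqref{eq:weak_st_eq2} for each $u \in L^r(\Omega) \hookrightarrow H^{-1}(\Omega)$ with $r > d/2$. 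To reach $L^\infty(\Omega) \cap C(\bar\Omega)$ with the stated bound, I would rewrite the PDE as $-\Delta y_u + a_0 y_u = u - f(\cdot, 0)$ with $a_0(x) := \int_0^1 \frac{\partial f}{\partial y}(x, t\, y_u(x))\, dt \geq 0$. A Stampacchia truncation argument on $y_u^{\pm}$, whose constants depend only on $\Omega$, $d$, and $r$ (not on $a_0$, thanks to its sign), first delivers $y_u \in L^\infty(\Omega)$; then $a_0 \in L^\infty(\Omega)$ and Lemma \ref{thm:wp_and_stab_linear} applied with source $u - f(\cdot, 0) \in L^r(\Omega)$ produces continuity and the stability bound with a constant $T_r$ independent of $u$.

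For the weak-to-strong statement, assume $u_k \rightharpoonup u$ in $L^r(\Omega)$. Banach--Steinhaus bounds $\{u_k\}$ in $L^r(\Omega)$, and the stability bound then bounds $\{y_{u_k}\}$ in $H_0^1(\Omega) \cap L^\infty(\Omega)$. Along a subsequence, $y_{u_k} \rightharpoonup \tilde y$ in $H_0^1(\Omega)$, and Rellich--Kondrachov combined with interpolation against the uniform $L^\infty$ bound promotes this to $y_{u_k} \to \tilde y$ in every $L^q(\Omega)$ with $q < \infty$. Dominated convergence then gives $f(\cdot, y_{u_k}) \to f(\cdot, \tilde y)$ in $L^2(\Omega)$, so passing to the limit in \eqref{eq:weak_st_eq2} and invoking uniqueness identifies $\tilde y = y_u$; the whole sequence thus converges. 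To upgrade to strong $H_0^1$-convergence, I would subtract the equations for $y_{u_k}$ and $y_u$ and test with $w_k := y_{u_k} - y_u$:
\begin{equation*}
\|\nabla w_k\|_\Omega^2 + (f(\cdot, y_{u_k}) - f(\cdot, y_u), w_k)_\Omega = (u_k - u, w_k)_\Omega.
\end{equation*}
Monotonicity of $f$ makes the middle term nonnegative, while the right-hand side vanishes in the limit because $w_k \to 0$ strongly in $L^{r'}(\Omega)$ while $\{u_k - u\}$ is bounded in $L^r(\Omega)$.

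The uniform convergence is the main technical obstacle: a direct application of Lemma \ref{thm:wp_and_stab_linear} to the linear equation satisfied by $w_k$ only yields $\|w_k\|_{L^\infty} \lesssim \|u_k - u\|_{L^r}$, which under merely weak convergence need not vanish. My plan is therefore to gain compactness in a H\"older space. Writing $-\Delta w_k + b_k w_k = u_k - u$ with $b_k(x) := \int_0^1 \frac{\partial f}{\partial y}(x, y_u(x) + t\, w_k(x))\, dt$ nonnegative and uniformly bounded in $L^\infty(\Omega)$ by Assumption \ref{A1} and the $L^\infty$-bound on $\{y_{u_k}\}$, the right-hand side is bounded in $L^r(\Omega)$ with $r > d/2$. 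Classical $C^{0,\alpha}$-estimates up to the boundary for linear elliptic equations in divergence form with bounded nonnegative zeroth-order coefficient and $L^r$ source data on the convex polygonal/polyhedral domain $\Omega$ then produce a uniform bound of $\{w_k\}$ in $C^{0,\alpha}(\bar\Omega)$ for some $\alpha > 0$. Arzel\`a--Ascoli extracts a uniformly convergent subsequence whose limit must coincide with the already-established $L^2$-limit $0$, and since this argument applies to every subsequence, $w_k \to 0$ uniformly on $\bar\Omega$, completing the proof.
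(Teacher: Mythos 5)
The paper does not prove this theorem; it only cites \cite[Sections 2.1 and 3.1]{MR2009948}, so your argument has to be measured against the standard proof in that literature, which it largely reproduces: truncation/Stampacchia for the $L^{\infty}$ bound, reduction to the linear equation of Lemma \ref{thm:wp_and_stab_linear} with a frozen nonnegative coefficient $a_0$, and a compactness argument for the weak-to-strong statement. Your treatment of the stability bound, of the strong $H_0^1$-convergence (energy identity plus monotonicity), and of the uniform convergence (uniform $C^{0,\alpha}$ bounds from the De Giorgi--Nash theory with $L^r$ data, $r>d/2$, on a domain with the exterior cone property, followed by Arzel\`a--Ascoli and identification of the limit) is sound, and the ordering of the $L^{\infty}$ bound before the use of $a_0\in L^{\infty}(\Omega)$ is handled correctly.

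There is, however, one genuine gap: the direct application of Browder--Minty to $T\colon H_0^1(\Omega)\to H^{-1}(\Omega)$, $\langle Ty,v\rangle=(\nabla y,\nabla v)_{\Omega}+(f(\cdot,y),v)_{\Omega}$. Assumption \ref{A1} is purely local in $y$ (the constants $C_{f,M}$ may blow up as $M\to\infty$) and imposes no growth restriction on $f$; for instance $f(x,y)=e^{e^{y}}-e$ satisfies all of its hypotheses. For such $f$ and a general $y\in H_0^1(\Omega)$ (which need not be bounded, especially for $d=3$ where $H_0^1\hookrightarrow L^6$ only), the product $f(\cdot,y)\,v$ need not be integrable, so $T$ does not map $H_0^1(\Omega)$ into $H^{-1}(\Omega)$, and the monotonicity, hemicontinuity, and coercivity you invoke are not even well posed. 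The standard repair --- and the one used in the reference the paper cites --- is to first replace $f$ by its truncation $f_M(x,y):=f\bigl(x,\min\{M,\max\{-M,y\}\}\bigr)$, which is globally bounded and monotone so that Browder--Minty applies, then run your Stampacchia argument to obtain an $L^{\infty}$ bound on the truncated solution that is \emph{independent of $M$} (it only uses the sign of the zeroth-order term and the data $u-f(\cdot,0)$), and finally observe that for $M$ larger than this bound the truncated solution solves the original equation; uniqueness then follows from monotonicity exactly as you argue. With that modification inserted before your existence step, the rest of your proof goes through.
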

For each $r>d/2$, we define the map $G_r:L^r(\Omega)\to H_0^1(\Omega)\cap C(\bar\Omega)$ by $G_r(u)=y_u$.
\begin{theorem}[properties of $G_r$]\label{T2.2}
		Let Assumption \ref{A1} hold. 
        For every $r > d/2$ the map $G_r$ is of class $C^2$.
        In addition, the first derivative at $u\in L^r(\Omega)$ in the direction $z\in L^r(\Omega)$, denoted by $\varphi_{u,z} = G'_r(u)z$, corresponds to the unique solution to
    \begin{align*}
     -\Delta \varphi_{u,z} +\frac{\partial f}{\partial y}(x,y_u)\varphi_{u,z} = z \textnormal{ in }\Omega,\quad \varphi_{u,z}=0 \textnormal{ on } \partial\Omega.
    \end{align*}    
    The second derivative at $u\in L^r(\Omega)$ in the directions $z_1, z_2\in L^r(\Omega)$, denoted by $\varphi_{u,z_1,z_2} = G''_r(u)(z_1,z_2)$, corresponds to the unique solution to
\begin{align*}
-\Delta \varphi_{u,z_1,z_2} +\frac{\partial f}{\partial y}(x,y_u)\varphi_{u,z_1,z_2} = -\frac{\partial^2f}{\partial y^2}(x,y_u)\varphi_{u,z_1}\varphi_{u,z_2} \textnormal{ in }\Omega, \quad \varphi_{u,z_1,z_2}=0 \textnormal{ on } \partial\Omega.
\end{align*}
\end{theorem}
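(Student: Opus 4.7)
The plan is to invoke the implicit function theorem applied to the mapping
\[
F:L^r(\Omega)\times \bigl(H^1_0(\Omega)\cap C(\bar\Omega)\bigr)\longrightarrow H^{-1}(\Omega)\cap L^r(\Omega),\qquad F(u,y):=-\Delta y+f(\cdot,y)-u,
\]
so that $G_r(u)$ is characterized by $F(u,G_r(u))=0$. Existence and uniqueness of such an implicit function are already granted by Theorem \ref{contandregsem}; what remains is to establish $C^2$ regularity and to compute the derivatives.

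First, I would verify that the Nemytskii operator $N_f\colon y\mapsto f(\cdot,y)$ is of class $C^2$ from $C(\bar\Omega)$ into $L^\infty(\Omega)$. This is the substantive step: using the local boundedness of $\partial f/\partial y$ and $\partial^2 f/\partial y^2$ on bounded sets (second bullet of Assumption \ref{A1}), a Taylor expansion shows that the candidate derivatives $N_f'(y)h=\tfrac{\partial f}{\partial y}(\cdot,y)h$ and $N_f''(y)(h_1,h_2)=\tfrac{\partial^2 f}{\partial y^2}(\cdot,y)h_1h_2$ provide the correct linearizations, while the uniform continuity condition on $\partial^2 f/\partial y^2$ (third bullet of Assumption \ref{A1}) is exactly what is needed to upgrade the second derivative from merely existing to being continuous in $y\in C(\bar\Omega)$. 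Since the Laplacian is a bounded linear operator from $H^1_0(\Omega)\cap C(\bar\Omega)$ into $H^{-1}(\Omega)$ and $u\mapsto u$ is linear and continuous, it follows that $F$ is of class $C^2$ with
\[
\partial_y F(u,y)\,w=-\Delta w+\tfrac{\partial f}{\partial y}(\cdot,y)\,w,\qquad \partial^2_{yy}F(u,y)(w_1,w_2)=\tfrac{\partial^2 f}{\partial y^2}(\cdot,y)\,w_1w_2,
\]
and all mixed derivatives in $u$ vanishing beyond the one obvious linear term.

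Next, I would show that $\partial_y F(u,y_u)$ is a topological isomorphism onto its codomain. Since $\tfrac{\partial f}{\partial y}(\cdot,y_u)\in L^\infty(\Omega)$ and is nonnegative by Assumption \ref{A1}, Lemma \ref{thm:wp_and_stab_linear} (applied with $a_0=\tfrac{\partial f}{\partial y}(\cdot,y_u)$) yields, for every right-hand side in $L^r(\Omega)$, a unique solution in $H^1_0(\Omega)\cap C(\bar\Omega)$ together with the estimate \eqref{solubound}; by duality and Lax--Milgram, the same operator is invertible out of $H^{-1}(\Omega)$ into $H^1_0(\Omega)$, so the bounded-inverse theorem closes the argument. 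The implicit function theorem then gives that $G_r$ is of class $C^2$.

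Finally, I would differentiate the identity $F(u,G_r(u))=0$. The first derivative at $u$ in direction $z$ yields $\partial_y F(u,y_u)\varphi_{u,z}=z$, which is precisely the stated linearized equation for $\varphi_{u,z}=G_r'(u)z$. Differentiating once more, the product rule gives
\[
\partial_y F(u,y_u)\varphi_{u,z_1,z_2}+\partial^2_{yy}F(u,y_u)(\varphi_{u,z_1},\varphi_{u,z_2})=0,
\]
which upon rearrangement is exactly the stated equation for $\varphi_{u,z_1,z_2}=G_r''(u)(z_1,z_2)$. The main obstacle is the first step, namely the $C^2$ regularity of the Nemytskii operator on $C(\bar\Omega)$; everything downstream is essentially bookkeeping once Assumption \ref{A1} is used to its full strength and Lemma \ref{thm:wp_and_stab_linear} provides the invertibility of the linearization.
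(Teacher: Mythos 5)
The paper does not actually prove this theorem: it simply cites \cite[Theorem 2.6]{CDJ2023} (and \cite[Theorem 2.11]{MR4162938}). Your sketch reconstructs the standard implicit-function-theorem argument that those references use, and the overall strategy --- $C^2$ regularity of the Nemytskii operator $y\mapsto f(\cdot,y)$ on $C(\bar\Omega)$ via the second and third bullets of Assumption \ref{A1}, invertibility of the linearization via Lemma \ref{thm:wp_and_stab_linear} with $a_0=\tfrac{\partial f}{\partial y}(\cdot,y_u)\ge 0$, and then differentiation of the identity $F(u,G_r(u))=0$ to read off both PDEs --- is the right one; the computed derivatives match the statement.

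There is, however, one step that fails as literally written: the choice of spaces for the implicit function theorem. With $y$ ranging over $H^1_0(\Omega)\cap C(\bar\Omega)$, the quantity $-\Delta y+f(\cdot,y)-u$ lies only in $H^{-1}(\Omega)$, not in $L^r(\Omega)$, so $F$ does not map into your stated codomain $H^{-1}(\Omega)\cap L^r(\Omega)$. Dropping the $L^r$ part does not help either: the operator $w\mapsto -\Delta w+\tfrac{\partial f}{\partial y}(\cdot,y_u)w$ is \emph{not} surjective from $H^1_0(\Omega)\cap C(\bar\Omega)$ onto $H^{-1}(\Omega)$, because for general $H^{-1}$ data the solution need not be continuous (continuity is exactly what Lemma \ref{thm:wp_and_stab_linear} buys you for $L^r$ data with $r>d/2$, not for $H^{-1}$ data). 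So $\partial_yF(u,y_u)$ is not an isomorphism between the spaces you wrote down, and the implicit function theorem cannot be invoked in that frame. The standard repair, used in the cited references, is to restrict the state space to $Y_r:=\{y\in H^1_0(\Omega)\cap C(\bar\Omega):\Delta y\in L^r(\Omega)\}$ equipped with the graph norm, and to take $F:L^r(\Omega)\times Y_r\to L^r(\Omega)$. Then $F$ is well defined and $C^2$ (since $f(\cdot,y)\in L^\infty(\Omega)\subset L^r(\Omega)$ for $y\in C(\bar\Omega)$ and $\Omega$ bounded), $y_u\in Y_r$ because $\Delta y_u=f(\cdot,y_u)-u\in L^r(\Omega)$, and $\partial_yF(u,y_u):Y_r\to L^r(\Omega)$ is a bijection by Lemma \ref{thm:wp_and_stab_linear}, hence a topological isomorphism by the open mapping theorem. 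With this adjustment the rest of your argument goes through unchanged.
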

\begin{proof}
    For a proof, we refer the reader to \cite[Theorem 2.6]{CDJ2023} (see also \cite[Theorem 2.11]{MR4162938}).
\end{proof}


\section{The optimal control problem}\label{sec:OCP}
\label{sec:ocp}

This section introduces a weak formulation for problem \eqref{def:opt_cont_prob} and recalls first- and second-order optimality conditions. 
We also introduce a finite element discretization scheme.


\subsection{Weak formulation}

We consider the following weak version of problem \eqref{def:opt_cont_prob}: Find
\begin{align}\label{def:weak_ocp}
\min \{ J(u): u \in U_{ad}\}
\end{align}
subject to 
\begin{align}\label{eq:weak_st_eq}
y_{u}\in H_0^{1}(\Omega) ~:~ (\nabla y_{u},\nabla v)_{\Omega} + (f(\cdot,y),v)_{\Omega} = (u,v)_{\Omega} \quad \forall v \in H_0^1(\Omega).
\end{align}
As a consequence of the direct method of the calculus of variations, problem \eqref{def:weak_ocp}--\eqref{eq:weak_st_eq} admits at least one global optimal solution $\bar{u}\in U_{ad}$ \cite[Theorem 4.15]{MR2583281}. 
Since the problem is not convex, we may also have local minimizers: Given $r>d/2$, we say that $\bar u \in U_{ad}$ is an $L^r (\Omega)$-weak local minimum of problem \eqref{def:weak_ocp}--\eqref{eq:weak_st_eq}, if there exists $\varepsilon > 0$ such that
\[
J( \bar u) \le J(u) \quad \forall u \in U_{ad} \text{ with } \|u-\bar u\|_{L^r(\Omega)}\le \varepsilon.
\]
We say that $\bar u \in U_{ad}$ is a strong local minimum of \eqref{def:weak_ocp}--\eqref{eq:weak_st_eq}, if there exists $\varepsilon >0$ such that
\[
J( \bar u) \le J(u) \quad \forall u \in U_{ad} \text{ with } \|y_u-y_{\bar u}\|_{C(\bar \Omega)}\le \varepsilon.
\]
We say that $\bar u \in U_{ad}$ is a strict weak (resp. strong) local minimum if the above inequalities are
strict for $u\ne \bar u $.
Strong local minimizers were first considered in \cite[Definition 1.6]{BBS2014}. For more details on these notions of optimality, we refer to \cite[Lemma 2.8]{CM2020}.


\subsection{The optimality conditions}
We present the first and second variations of the cost functional to discuss necessary and sufficient optimality conditions. 
The calculations in the following are standard and we refer to \cite[Sections 4.6 and 4.10]{MR2583281}.

\begin{proposition}[properties of $J$]\label{T3.1}
Let Assumption \ref{A1} hold.
For every $r > \frac{d}{2}$, the functional $J:L^r(\Omega) \longrightarrow \mathbb{R}$ is of class $C^2$. 
Moreover, given $u, z, z_1, z_2 \in L^r(\Omega)$ we have
\begin{align*}
     J'(u)z = (y_u-y_\Omega,\varphi_{u,z})_{\Omega}= (p_u, z)_{\Omega}, 
\end{align*}
\begin{align*}
   J''(u)(z_1,z_2) = \left(1 - p_u\tfrac{\partial^2f}{\partial y^2}(x,y_u),
       \varphi_{u,z_1}\varphi_{u,z_2}\right)_{\Omega}.
\end{align*}
Here, $p_u \in H^1_0(\Omega) \cap C(\bar \Omega)$ is the unique solution to the adjoint equation
\begin{equation*}
      -\Delta p_u + \frac{\partial f}{\partial y}(x,y_u)p_u  =  y_u-y_\Omega \textnormal{ in } \Omega, \quad p_u = 0 \textnormal{ on } \partial\Omega.
\end{equation*}
\end{proposition}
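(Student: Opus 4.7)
The plan is to view $J$ as a composition and then apply the chain rule, after which the stated expressions will follow from integration by parts against the adjoint state $p_u$. Concretely, I would write $J = \tilde J \circ G_r$, where $\tilde J:L^2(\Omega)\to\mathbb{R}$ is the quadratic tracking functional $\tilde J(y)=\tfrac12\|y-y_\Omega\|_\Omega^2$ and $G_r:L^r(\Omega)\to H^1_0(\Omega)\cap C(\bar\Omega)$ is the control-to-state map from Theorem \ref{T2.2}. Since $\tilde J$ is of class $C^\infty$ on $L^2(\Omega)$ and $G_r$ is of class $C^2$ by Theorem \ref{T2.2}, and since $H^1_0(\Omega)\cap C(\bar\Omega)\hookrightarrow L^2(\Omega)$, the chain rule immediately yields that $J:L^r(\Omega)\to\mathbb{R}$ is of class $C^2$.

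Next, I would compute the first derivative by the chain rule: for $z\in L^r(\Omega)$,
\begin{equation*}
J'(u)z=\tilde J'(y_u)G'_r(u)z=(y_u-y_\Omega,\varphi_{u,z})_\Omega.
\end{equation*}
To rewrite this in terms of $p_u$, I first need to certify that the adjoint equation is well posed. Since $y_u\in L^\infty(\Omega)$ and $y_\Omega\in L^2(\Omega)$, the right-hand side $y_u-y_\Omega$ lies in $L^2(\Omega)\subset L^r(\Omega)$ with some $r>d/2$, and $\tfrac{\partial f}{\partial y}(\cdot,y_u)\in L^\infty(\Omega)$ is nonnegative by Assumption \ref{A1}; hence Lemma \ref{thm:wp_and_stab_linear} applies and gives a unique $p_u\in H^1_0(\Omega)\cap C(\bar\Omega)$. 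Testing the adjoint equation for $p_u$ with $\varphi_{u,z}\in H^1_0(\Omega)$ and the linearized state equation for $\varphi_{u,z}$ with $p_u$, and subtracting the two weak formulations, gives
\begin{equation*}
(y_u-y_\Omega,\varphi_{u,z})_\Omega=(p_u,z)_\Omega,
\end{equation*}
which is the claimed expression for $J'(u)z$.

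For the second derivative, the chain rule produces
\begin{equation*}
J''(u)(z_1,z_2)=\tilde J''(y_u)(G'_r(u)z_1,G'_r(u)z_2)+\tilde J'(y_u)\,G''_r(u)(z_1,z_2)=(\varphi_{u,z_1},\varphi_{u,z_2})_\Omega+(y_u-y_\Omega,\varphi_{u,z_1,z_2})_\Omega.
\end{equation*}
I would then process the last term by the same adjoint trick: testing the adjoint equation with $\varphi_{u,z_1,z_2}$ and the second-order sensitivity equation from Theorem \ref{T2.2} with $p_u$, the Laplacian and zeroth-order terms cancel, yielding
\begin{equation*}
(y_u-y_\Omega,\varphi_{u,z_1,z_2})_\Omega=-\left(p_u\tfrac{\partial^2 f}{\partial y^2}(x,y_u),\varphi_{u,z_1}\varphi_{u,z_2}\right)_\Omega.
\end{equation*}
Substituting back and collecting terms gives the stated identity for $J''(u)(z_1,z_2)$.

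I do not expect a genuine obstacle here: the regularity of $G_r$ is delegated to Theorem \ref{T2.2}, and the only substantive points are (i) verifying that $y_u-y_\Omega$ is admissible as a right-hand side for Lemma \ref{thm:wp_and_stab_linear} so that $p_u$ is well defined in $H^1_0(\Omega)\cap C(\bar\Omega)$, and (ii) justifying the integration-by-parts manipulations, which are legitimate because all the involved solutions lie in $H^1_0(\Omega)$ and the coefficients $\tfrac{\partial f}{\partial y}(\cdot,y_u)$ and $\tfrac{\partial^2 f}{\partial y^2}(\cdot,y_u)$ are bounded by Assumption \ref{A1}. The rest is bookkeeping.
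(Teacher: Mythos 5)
Your proof is correct and follows exactly the standard chain-rule-plus-adjoint-state argument that the paper itself does not spell out but delegates to \cite[Sections 4.6 and 4.10]{MR2583281}; the decomposition $J=\tilde J\circ G_r$, the verification that $y_u-y_\Omega\in L^2(\Omega)$ is an admissible right-hand side for the adjoint equation, and the two test-function swaps to eliminate $\varphi_{u,z}$ and $\varphi_{u,z_1,z_2}$ are precisely the intended computations. No gaps.
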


We define the Hamiltonian 
$\Omega\times\mathbb R\times \mathbb R\times \mathbb R \ni (x,y,\varphi,u) \mapsto H(x,y,\varphi,u) \in \mathbb R$
by
\begin{align*}
       H(x,y,\varphi,u):=\nicefrac{1}{2}(y_u-y_d)^2+\varphi(u-f(x,y)).
\end{align*}
We now establish the following local form of the Pontryagin-type necessary optimality conditions for problem \eqref{def:weak_ocp}-\eqref{eq:weak_st_eq} (see e.g. \cite[Theorem 4]{CM2021}, \cite[Theorem 1.1]{MR2891922} or \cite[Section 4.8]{MR2583281}).
 
\begin{theorem}[first-order optimality conditions]
\label{pontryagin}
Let Assumption \ref{A1} hold.
If $\bar u$ is a weak or strong local minimizer for problem \eqref{def:weak_ocp}-\eqref{eq:weak_st_eq}, 
then there exist unique elements $\bar y, \bar p\in H^1_0(\Omega)\cap C(\bar \Omega)$ such that 
\begin{align}
&-\Delta\bar y + f(x,\bar y) = \bar u \textnormal{ in } \Omega,\quad \bar y = 0\textnormal{ on } \partial\Omega,
\nonumber\\
&-\Delta\bar p = \frac{\partial H}{\partial y}(\cdot,\bar y,\bar p,\bar u) \textnormal{ in } \Omega,\quad \bar p = 0\textnormal{ on } \partial\Omega,
\nonumber\\
& J'(\bar{u})(u - \bar{u}) = (\bar p, u - \bar u)_{\Omega}\dx \ge 0 \quad \forall u \in U_{ad}. \label{eq:var_ineq} 
\end{align}
\end{theorem}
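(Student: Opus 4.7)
The plan is to reduce both notions of local optimality to the standard first-order variational inequality associated with differentiation of $J$, and then to identify the directional derivative with the $L^2$ inner product against $\bar p$ via Proposition \ref{T3.1}.

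First, I would handle existence, uniqueness, and regularity of $\bar y$ and $\bar p$, which follow from earlier results. Since $U_{ad}$ is bounded in $L^\infty(\Omega)$, we have $\bar u \in L^r(\Omega)$ for every $r > d/2$, and Theorem \ref{contandregsem} provides a unique $\bar y := y_{\bar u} \in H^1_0(\Omega)\cap C(\bar\Omega)$ satisfying the state equation. Setting $a_0 := \partial_y f(\cdot,\bar y)$, Assumption \ref{A1} ensures $a_0 \in L^\infty(\Omega)$ and $a_0 \ge 0$; since $\bar y - y_\Omega \in L^2(\Omega)$ and $2 > d/2$ for $d \in \{2,3\}$, Lemma \ref{thm:wp_and_stab_linear} delivers a unique $\bar p \in H^1_0(\Omega)\cap C(\bar\Omega)$ solving $-\Delta \bar p + a_0 \bar p = \bar y - y_\Omega$. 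A direct computation of $\partial H/\partial y$ shows that this is precisely the stated adjoint equation $-\Delta \bar p = \partial H/\partial y(\cdot,\bar y,\bar p,\bar u)$.

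Next, for the variational inequality, I would first treat a weak $L^r(\Omega)$-local minimizer with $r > d/2$. Given $u \in U_{ad}$, convexity of $U_{ad}$ yields $u_t := \bar u + t(u-\bar u) \in U_{ad}$ for $t \in [0,1]$; since $\|u_t-\bar u\|_{L^r(\Omega)} = t\|u-\bar u\|_{L^r(\Omega)}$, for sufficiently small $t>0$ local optimality gives $J(\bar u) \le J(u_t)$, and dividing by $t$ and letting $t \to 0^+$ produces $J'(\bar u)(u-\bar u) \ge 0$, using the $C^2$-regularity of $J$ established in Proposition \ref{T3.1}. The representation $J'(\bar u)z = (\bar p,z)_\Omega$ from the same proposition then closes this case. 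To handle a strong local minimizer, I would invoke the continuity of $G_r : L^r(\Omega) \to C(\bar\Omega)$ supplied by Theorem \ref{contandregsem}: it converts the strong neighborhood $\{u : \|y_u - y_{\bar u}\|_{C(\bar\Omega)} \le \varepsilon\}$ into an $L^r$-neighborhood of $\bar u$, so $\bar u$ is automatically a weak $L^r$-local minimizer and the previous argument applies verbatim.

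The main (mild) obstacle is thus just reconciling the two notions of local optimality within a single statement; once the continuous-embedding argument above is in place, the first-order conditions follow from standard convex-direction variational calculus combined with Proposition \ref{T3.1}. No further work is needed for uniqueness, since $\bar y$ is determined as $G_r(\bar u)$ via Theorem \ref{contandregsem} and $\bar p$ is the unique solution of a well-posed linear problem with nonnegative coefficient via Lemma \ref{thm:wp_and_stab_linear}.
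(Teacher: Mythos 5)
Your proposal is correct and follows exactly the standard argument: the paper does not actually prove Theorem \ref{pontryagin} but cites \cite{CM2021,MR2891922,MR2583281}, whose proofs proceed precisely as you do — well-posedness of the state and adjoint equations via Theorem \ref{contandregsem} and Lemma \ref{thm:wp_and_stab_linear}, the convex-direction quotient $t^{-1}(J(u_t)-J(\bar u))\ge 0$ combined with the representation $J'(\bar u)z=(\bar p,z)_\Omega$ of Proposition \ref{T3.1}, and the reduction of strong to weak local optimality via continuity of $G_r$ (cf. \cite[Lemma 2.8]{CM2020}). No gaps.
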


Inequality \eqref{eq:var_ineq} implies that, f.a.a. $x\in\Omega$, we have 
\begin{align*}
\bar{u}(x) = a ~ \text{ if } ~  \bar{p}(x) > 0, \quad \bar{u}(x) \in [a,b] ~ \text{ if } ~  \bar{p}(x) = 0, \quad \bar{u}(x) = b  ~ \text{ if } ~  \bar{p}(x) < 0;
\end{align*}
see \cite[Remark 1.2]{MR2891922}.

For the upcoming analysis, we assume, given $\gamma\in ((2/(2+d),1]$, that there exists positive constants $\kappa$ and $\alpha$ such that 
\begin{align}\label{eq:assumption_control_growth}
J'(\bar{u})(u - \bar{u}) + J''(\bar{u})(u - \bar{u})^2 \geq \kappa \|u - \bar{u}\|_{L^{1}(\Omega)}^{1+\frac{1}{\gamma}} \qquad \forall u\in U_{ad} \text{ with }\|u - \bar{u}\|_{L^1(\Omega)} < \alpha.
\end{align}

The next estimate follows from \cite[Lemma 3.9]{MR4791221} (see also \cite[Section 3]{MR4525177}).
Let $\bar{u},u \in U_{ad}$ and define $u_{\theta}:= \bar{u} + \theta(u - \bar{u})$ for some measurable function $\theta$ with $0 \leq \theta(x) \leq 1$. Then, for every $\epsilon > 0$ there exists $\delta_{\varepsilon}$ such that
\begin{align}\label{eq:sec_var_est}
    |[J''(u_{\theta})- J''(\bar{u})](u - \bar{u})^2|
    \leq 
    \varepsilon\|u - \bar{u}\|_{L^1(\Omega)}^{1+\frac{1}{\gamma}} \qquad \forall u\in U_{ad} \text{ with } \|u - \bar{u}\|_{L^{1}(\Omega)} < \delta_{\varepsilon}.
\end{align}
For error analysis it will be important the case $\varepsilon = \frac{\kappa}{2}$ with $\kappa$ given as in \eqref{eq:assumption_control_growth} (cf. Theorem \ref{thm:global_rel_ocp}). 
Due to \eqref{eq:sec_var_est} assumption \eqref{eq:assumption_control_growth} leads to a strict local minimum; see \cite[Theorem 3.11]{MR4791221}.
\begin{theorem}[local optimality]\label{thm:eqhalf}
Let $\bar u\in U_{ad}$ be given and let \eqref{eq:assumption_control_growth} be satisfied together with the first-order necessary optimality condition \eqref{eq:var_ineq}.
Then there exist positive constants $\kappa$ and $\alpha$ such that
    \begin{equation*}
    J(u)-J(\bar u)\geq \kappa\|u-\bar u\|_{L^1(\Omega)}^{1+\frac{1}{\gamma}},
    \end{equation*}
    for all $u\in U_{ad}$ with $\|u-\bar u\|_{L^1(\Omega)}<\alpha$.
\end{theorem}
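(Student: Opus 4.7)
The plan is to perform a second-order Taylor expansion of $J$ about $\bar u$ and estimate each resulting term using the three tools at hand: the variational inequality \eqref{eq:var_ineq}, the growth condition \eqref{eq:assumption_control_growth}, and the continuity estimate \eqref{eq:sec_var_est}. Since $J$ is of class $C^2$ on $L^r(\Omega)$ by Proposition \ref{T3.1}, for every $u\in U_{ad}$ there exists $u_{\theta} := \bar u + \theta(u - \bar u)$, with $\theta$ as in the setting of \eqref{eq:sec_var_est}, such that
\[
J(u) - J(\bar u) = J'(\bar u)(u - \bar u) + \tfrac{1}{2} J''(u_\theta)(u - \bar u)^2.
\]
Adding and subtracting $\tfrac{1}{2}J''(\bar u)(u-\bar u)^2$ produces the decomposition
\[
J(u) - J(\bar u) = J'(\bar u)(u-\bar u) + \tfrac{1}{2}J''(\bar u)(u-\bar u)^2 + \tfrac{1}{2}\bigl[J''(u_\theta) - J''(\bar u)\bigr](u - \bar u)^2.
\]

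Next, I would deal with the two deterministic terms. The factor $\tfrac{1}{2}$ in front of $J''(\bar u)$ does not directly match the normalization in \eqref{eq:assumption_control_growth}; however, since $\bar u$ fulfils the first-order optimality condition, $J'(\bar u)(u - \bar u) \geq 0$ for every $u \in U_{ad}$. Splitting
\[
J'(\bar u)(u - \bar u) + \tfrac{1}{2}J''(\bar u)(u - \bar u)^2 = \tfrac{1}{2}J'(\bar u)(u-\bar u) + \tfrac{1}{2}\bigl[J'(\bar u)(u - \bar u) + J''(\bar u)(u - \bar u)^2\bigr]
\]
and invoking first \eqref{eq:var_ineq} on the first summand and then \eqref{eq:assumption_control_growth} on the second, I obtain the lower bound $\tfrac{\kappa}{2}\|u - \bar u\|_{L^1(\Omega)}^{1 + 1/\gamma}$ whenever $\|u - \bar u\|_{L^1(\Omega)} < \alpha$.

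For the remainder, I apply \eqref{eq:sec_var_est} with the explicit choice $\varepsilon = \kappa/4$, which yields a radius $\delta_{\kappa/4} > 0$ such that
\[
\tfrac{1}{2}\bigl|[J''(u_\theta) - J''(\bar u)](u - \bar u)^2\bigr| \leq \tfrac{\kappa}{8}\,\|u - \bar u\|_{L^1(\Omega)}^{1 + 1/\gamma}
\]
whenever $\|u - \bar u\|_{L^1(\Omega)} < \delta_{\kappa/4}$. Combining both contributions,
\[
J(u) - J(\bar u) \geq \Bigl(\tfrac{\kappa}{2} - \tfrac{\kappa}{8}\Bigr)\|u - \bar u\|_{L^1(\Omega)}^{1+1/\gamma} = \tfrac{3\kappa}{8}\,\|u - \bar u\|_{L^1(\Omega)}^{1+1/\gamma}
\]
for all $u \in U_{ad}$ with $\|u - \bar u\|_{L^1(\Omega)} < \min(\alpha, \delta_{\kappa/4})$. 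Renaming the constants (redefining $\kappa$ as $3\kappa/8$ and $\alpha$ as $\min(\alpha,\delta_{\kappa/4})$) concludes the proof.

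I do not foresee a serious obstacle; the only subtlety is to reconcile the prefactor $\tfrac{1}{2}$ from the Taylor expansion with the normalization chosen in assumption \eqref{eq:assumption_control_growth}, which is handled by exploiting the nonnegativity of $J'(\bar u)(u-\bar u)$ on $U_{ad}$ granted by the first-order necessary condition.
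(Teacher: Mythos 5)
Your argument is correct: the Taylor expansion with Lagrange remainder, the split of $J'(\bar u)(u-\bar u)+\tfrac12 J''(\bar u)(u-\bar u)^2$ into $\tfrac12 J'(\bar u)(u-\bar u)\ge 0$ (by \eqref{eq:var_ineq}) plus $\tfrac12$ times the left-hand side of \eqref{eq:assumption_control_growth}, and the absorption of the remainder via \eqref{eq:sec_var_est} with a sufficiently small $\varepsilon$ all go through, and the bookkeeping of constants is sound. The paper does not print its own proof but defers to the cited reference, whose argument is precisely this standard one, so your proposal matches the intended route.
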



\subsection{Some general remarks}

\begin{remark}[more general problem]

Extending the analysis to a more general affine problem is possible.
For example, we could consider the case in which $J(u):=\int_{\Omega} L(x,y(x),u(x))\textnormal{d}x$, with a function $L$ satisfying the following properties:
The function $L:\Omega \times \mathbb{R}^2 \longrightarrow \mathbb{R}$ is Carath\'eodory and of class $C^2$ with respect to the second variable. 
Moreover, $L(x,y,u) = L_a(x,y) + L_b(x,y)u$, with $L_a(\cdot,0), L_b(\cdot,0) \in L^1(\Omega)$ and
\begin{align*}
\left\{\begin{array}{l} 
\displaystyle \forall M > 0 \ \exists  C_{L,M} > 0 \text{ such that }\vspace{1mm}\\
\displaystyle \Big\vert\frac{\partial L}{\partial y}(x,y,u)\Big\vert + \Big\vert\frac{\partial^2L}{\partial y^2}(x,y,u)\Big\vert \le C_{L,M}\ \forall \vert u\vert \le M, \, \forall y\in \mathbb{R}, \vspace{1mm} \\
\displaystyle \forall \rho > 0 \text{ and } M > 0 \ \vspace{1mm} \exists \epsilon>0 \text{ such that }\\
\displaystyle\left\vert\frac{\partial^2L}{\partial y^2}(x,y_2,u) - \frac{\partial^2L}{\partial y^2}(x,y_1,u)\right\vert < \rho\ \vert y_1\vert, \vert y_2\vert \leq M \mbox{ with }  \vert y_2 - y_1\vert \le \epsilon, \end{array}\right.
\end{align*}
for almost every $x \in \Omega$.
 We omitted this case in the paper to facilitate the presentation of the main results. 
\end{remark}

\begin{remark}[larger range for $\gamma$]
The growth condition established in Theorem \eqref{thm:eqhalf} can be extended to $\gamma\in (0,1]$ by assuming \eqref{eq:assumption_control_growth} for $ \gamma \in (0,1]$ together with the second-order sufficient optimality condition commonly used in control constraint problems. 
For the detailed arguments, we refer to \cite[Remark 3.12]{MR4791221}.
\end{remark}


\subsection{Finite element approximation}\label{sec:FEM}

We start by presenting standard ingredients for finite element approximations \cite{MR2373954,MR2050138}. 
Let $\mathcal{T} = \{T\}$ be a conforming partition of $\bar{\Omega}$ into simplices $T$ with size $h_T := \textrm{diam}(T)$. 
We denote by $\mathbb{T}$ a collection of conforming and shape regular meshes that are refinements of an initial mesh $\mathcal{T}_0$.
Given a mesh $\mathcal{T}_{\ell} \in \mathbb{T}$ with $\ell\in\mathbb{N}_{0}$, we denote by $\mathcal{E}_{\ell}$ the set of \emph{internal} $(d-1)$-dimensional interelement boundaries $e$ of $\mathcal{T}_{\ell}$. 
For $T \in \mathcal{T}_{\ell}$, we let $\mathcal{E}_T$ denote the subset of $\mathcal{E}_{\ell}$ which contains the sides of the element $T$. 
We denote by $\mathcal{N}_e \subset \mathcal{T}_{\ell}$ the subset that contains the two elements that have $e\in \mathcal{E}_{\ell}$ as a side, i.e., $\mathcal{N}_e=\{T^+,T^-\}$, where $T^+, T^- \in \mathcal{T}_{\ell}$ are such that  $T^{+} \neq T^{-}$ and $e = T^+ \cap T^-$. 
Given $T \in \mathcal{T}_{\ell}$, we define the \emph{star} associated with the $T$ as
\begin{equation}\label{def:patch}
\mathcal{N}_T:= \left \{ T^{\prime}\in\mathcal{T}_{\ell}: \mathcal{E}_{T}\cap \mathcal{E}_{T^\prime}\neq\emptyset \right \}.
\end{equation}
In an abuse of notation, below we denote by $\mathcal{N}_T$ either the set itself or the union of its elements. 

Given a mesh $\mathcal{T}_{\ell} \in \mathbb{T}$ with $\ell\in\mathbb{N}_{0}$, we define the finite element space of continuous piecewise polynomials of degree one that vanish on the boundary as
\begin{align*}
\mathbb{V}_{\ell} := \{v_{\ell}\in C(\bar{\Omega}): v_{\ell}|_{T}\in \mathbb{P}_1(T) ~ \forall T\in \mathcal{T}_{\ell} \} \cap H_0^{1}(\Omega).
\end{align*}

Given $v_{\ell} \in \mathbb{V}_{\ell}$ and $e \in \mathcal{E}_{\ell}$, we define the \emph{jump} or interelement residual on $e$ by
\[
\llbracket \nabla v_{\ell}\cdot \mathbf{n} \rrbracket|_{e} = \llbracket \nabla v_{\ell}\cdot \mathbf{n} \rrbracket := \mathbf{n}^{+} \cdot \nabla v_{\ell}|^{}_{T^{+}} + \mathbf{n}^{-} \cdot \nabla v_{\ell}|^{}_{T^{-}},
\]
where $\mathbf{n}^{\pm}$ denote the unit exterior normal to the element $T^{\pm}$. Here, $T^{+}$, $T^{-} \in \mathcal{N}_{e}$.


\subsubsection{Semidiscrete scheme}
We consider the following semidiscrete version of the optimal control problem \eqref{def:weak_ocp}--\eqref{eq:weak_st_eq}: Find 
\begin{align}\label{eq:discrete_opc_semi}
\min \{ J_{\ell}(\mathfrak{u}): \mathfrak{u} \in U_{ad}\}
\end{align}
subject to the discrete state equation
\begin{align}\label{eq:discrete_pde_semi}
(\nabla y_{\ell}(\mathfrak{u}),\nabla v_\ell)_{\Omega} + (f(\cdot,y_{\ell}(\mathfrak{u})),v_{\ell})_{\Omega}
=
(\mathfrak{u}, v_\ell)_{\Omega} 
\qquad \forall v_\ell\in \mathbb{V}_{\ell}.
\end{align}
Here, $J_{\ell}(\mathfrak{u}) = \nicefrac{1}{2}\|y_{\ell}(\mathfrak{u}) - y_{\Omega}\|_{\Omega}^{2}$.
Problem \eqref{eq:discrete_opc_semi}--\eqref{eq:discrete_pde_semi} admits at least one optimal control $\bar{\mathfrak{u}}$ which can be characterized as follows \cite[Section 4.1]{MR4791221}:
\begin{align}\label{eq:semi_var_ineq}
J_{\ell}^{\prime}(\bar{\mathfrak{u}})(u - \bar{\mathfrak{u}}) = (\bar{p}_{\ell}, u - \bar{\mathfrak{u}})_{\Omega} \geq 0 \quad \forall u \in U_{ad},
\end{align}
where $\bar{p}_{\ell}\in \mathbb{V}_{\ell}$ solves the discrete adjoint state equation
\begin{align}\label{eq:discrete_adj_eq}
(\nabla v_{\ell},\nabla \bar{p}_{\ell})_{\Omega} + \left(\frac{\partial f}{\partial y}(\cdot,\bar{y}_{\ell})\bar{p}_{\ell},v_{\ell}\right)_{\Omega}=(\bar{y}_{\ell} - y_{\Omega},v_{\ell})_{\Omega} \quad \forall v_{\ell} \in \mathbb{V}_{\ell}.
\end{align}
Here, $\bar{y}_{\ell}=\bar{y}_{\ell}(\bar{\mathfrak{u}})$ solves problem \eqref{eq:discrete_pde_semi} with $\mathfrak{u} = \bar{\mathfrak{u}}$.

As in the continuous case, the variational inequality \eqref{eq:semi_var_ineq} implies the following characterization for optimal controls $\bar{\mathfrak{u}}$, f.a.a. $x\in \Omega$: 
\begin{equation}\label{eq:pointwise_charac_discrete}
\bar{\mathfrak{u}}(x) = a ~ \text{ if } ~  \bar{p}_{\ell}(x) > 0, \quad \bar{\mathfrak{u}}(x) \in [a,b] ~ \text{ if } ~  \bar{p}_{\ell}(x) = 0, \quad \bar{\mathfrak{u}}(x) = b  ~ \text{ if } ~  \bar{p}_{\ell}(x) < 0. \hspace{-0.2cm}
\end{equation}

Since $\bar{\mathfrak{u}}$ implicitly depends on $\mathcal{T}_\ell$, in what follows we shall adopt the notation $\bar{\mathfrak{u}}_{\ell}$.


\section{A posteriori error analysis}\label{sec:a_post_analysis}

In this section, we devise an a posteriori error estimator for the optimal control problem \eqref{def:weak_ocp}--\eqref{eq:weak_st_eq}.
Such an error estimator will be formed by the sum of two contributions related to the discretization of the state and adjoint state equations.
We prove reliability and efficiency properties for the aforementioned error estimator.


\subsection{Reliability}\label{sec:reliability}

In what follows, we present error estimators for discrete state and adjoint state equations.


\subsubsection{A posteriori error estimates: state equation}

Let $\bar{\mathfrak{u}}_{\ell}$ be a local minimum of the semidiscrete optimal control problem. 
We introduce the auxiliary variable $y_{\bar{\mathfrak{u}}_{\ell}} \in H_0^1(\Omega)$ as the unique solution to
\begin{align}\label{eq:aux_state_apost}
(\nabla y_{\bar{\mathfrak{u}}_{\ell}},\nabla v)_{\Omega} + (f(\cdot,y_{\bar{\mathfrak{u}}_{\ell}}),v)_{\Omega} = (\bar{\mathfrak{u}}_{\ell},v)_{\Omega} \quad \forall v \in H_0^1(\Omega).
\end{align}
We note that the discrete optimal state $\bar{y}_{\ell}$, associated to $\bar{\mathfrak{u}}_{\ell}$, corresponds to the finite element approximation of $y_{\bar{\mathfrak{u}}_{\ell}}$ in $\mathbb{V}_{\ell}$.

We introduce the following global error estimator and local error indicator, respectively,
\begin{align}\label{def:state_indicator}
\eta_{st}^2 := \sum_{T\in \mathcal{T}_{\ell}} \eta_{st,T}^2,
\qquad
\eta_{st,T}^2 := h_T^4\| \bar{\mathfrak{u}}_{\ell} - f(\cdot,\bar{y}_{\ell})\|_{T}^{2} + \sum_{e\in \mathcal{E}_{T}}h_T^3\|\llbracket \nabla \bar{y}_{\ell}\cdot \mathbf{n} \rrbracket\|_e^2. 
\end{align}

We present the following reliability result and, for the sake of completeness, a proof.

\begin{lemma}[reliability: state equation]\label{lemma:rel_state}
Let Assumption \ref{A1} hold.
Let $\bar{\mathfrak{u}}_{\ell}$ be a local minimum of the semidiscrete optimal control problem and $\bar{y}_{\ell}$ its associated discrete optimal state.
If $\frac{\partial f}{\partial y}(\cdot, y)\in L^{\infty}(\Omega)$ for all $y\in H_0^1(\Omega)$, then
\begin{align}\label{eq:reliability_state_l2}
    \|y_{\bar{\mathfrak{u}}_{\ell}} - \bar{y}_{\ell}\|_{\Omega} 
    \lesssim
    \eta_{st},
\end{align}
where the hidden constant is independent of $y_{\bar{\mathfrak{u}}_{\ell}}$ and $\bar{y}_{\ell}$, the size of the elements in the mesh $\mathcal{T}_{\ell}$, and $\#\mathcal{T}_{\ell}$.
\end{lemma}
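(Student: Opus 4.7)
The estimator carries the weights $h_T^4$ on the element residual and $h_T^3$ on the jump residual, which is the classical signature of a duality (Aubin--Nitsche) argument used to bound the $L^2$-error rather than the $H^1$-error. The plan is therefore a standard duality proof, adapted to the semilinear case by using the mean value theorem to linearise $f$.

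First, I would set $e:=y_{\bar{\mathfrak{u}}_\ell}-\bar y_\ell$ and write
\[
f(\cdot,y_{\bar{\mathfrak{u}}_\ell})-f(\cdot,\bar y_\ell)=\hat a\, e,\qquad \hat a(x):=\int_0^1\frac{\partial f}{\partial y}\bigl(x,\bar y_\ell+t(y_{\bar{\mathfrak{u}}_\ell}-\bar y_\ell)\bigr)\,dt.
\]
Because $y_{\bar{\mathfrak u}_\ell},\bar y_\ell\in H_0^1(\Omega)\cap C(\bar\Omega)$ and $\partial_y f\ge 0$ is bounded in $L^\infty$ on bounded sets (Assumption \ref{A1} plus the hypothesis of the lemma), $\hat a\in L^\infty(\Omega)$ and $\hat a\ge 0$. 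Subtracting the variational formulations \eqref{eq:aux_state_apost} and \eqref{eq:discrete_pde_semi} gives the residual identity
\[
(\nabla e,\nabla v)_\Omega+(\hat a\,e,v)_\Omega
=(\bar{\mathfrak u}_\ell,v)_\Omega-(\nabla\bar y_\ell,\nabla v)_\Omega-(f(\cdot,\bar y_\ell),v)_\Omega\qquad\forall v\in H_0^1(\Omega),
\]
which vanishes when $v\in\mathbb{V}_\ell$ (Galerkin orthogonality).

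Next I would introduce the dual problem: let $z\in H_0^1(\Omega)$ solve
\[
-\Delta z+\hat a\, z=e\ \text{in }\Omega,\qquad z=0\ \text{on }\partial\Omega.
\]
Since $\hat a\in L^\infty(\Omega)$ is nonnegative and $\Omega$ is convex, Lemma \ref{thm:wp_and_stab_linear} yields $z\in H^2(\Omega)$ with $\|z\|_{H^2(\Omega)}\lesssim\|e\|_\Omega$. Testing the dual equation with $e$ gives $\|e\|_\Omega^2=(\nabla e,\nabla z)_\Omega+(\hat a\,e,z)_\Omega$. Substituting $v=z-z_\ell$ for an arbitrary $z_\ell\in\mathbb{V}_\ell$ in the residual identity (by Galerkin orthogonality this is the same as taking $v=z$), and integrating by parts element-wise, I obtain, using $\Delta\bar y_\ell|_T=0$ for piecewise linear functions,
\[
\|e\|_\Omega^2=\sum_{T\in\mathcal{T}_\ell}(\bar{\mathfrak u}_\ell-f(\cdot,\bar y_\ell),\,z-z_\ell)_T-\sum_{e\in\mathcal{E}_\ell}\bigl(\llbracket\nabla\bar y_\ell\cdot\mathbf n\rrbracket,\,z-z_\ell\bigr)_e.
\]

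Finally, I would pick $z_\ell$ to be a Scott--Zhang (or Cl\'ement) quasi-interpolant of $z$ into $\mathbb{V}_\ell$, invoke the local interpolation estimates
\[
\|z-z_\ell\|_T\lesssim h_T^2\|z\|_{H^2(\mathcal{N}_T)},\qquad \|z-z_\ell\|_e\lesssim h_T^{3/2}\|z\|_{H^2(\mathcal{N}_T)},
\]
apply Cauchy--Schwarz element-by-element and then globally, together with shape regularity (so the overlap of the patches $\mathcal{N}_T$ is bounded), to obtain
\[
\|e\|_\Omega^2\lesssim\eta_{st}\,\|z\|_{H^2(\Omega)}\lesssim\eta_{st}\,\|e\|_\Omega,
\]
and the claim follows by dividing by $\|e\|_\Omega$.

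The only nontrivial step is the justification of the linearisation and the regularity of the dual problem: one has to check that $\hat a$ is truly bounded and nonnegative so that Lemma \ref{thm:wp_and_stab_linear} applies and yields $H^2$-regularity independently of $\bar{\mathfrak u}_\ell$ (in particular, the constant in $\|z\|_{H^2(\Omega)}\lesssim\|e\|_\Omega$ depends only on $\Omega$, not on $\hat a$). Everything else is routine residual-type a posteriori machinery.
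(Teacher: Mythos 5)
Your proposal is correct and follows essentially the same route as the paper: a duality argument with the linearized dual problem $-\Delta z+\hat a z=e$ (the paper writes the coefficient via the pointwise mean value theorem as $\tfrac{\partial f}{\partial y}(\cdot,y_\theta)$ rather than your integral form, and uses the Lagrange interpolator instead of Scott--Zhang, but these are cosmetic differences), followed by Galerkin orthogonality, elementwise integration by parts, interpolation estimates, and the $H^2$-stability bound $\|z\|_{H^2(\Omega)}\lesssim\|e\|_{\Omega}$. You also correctly identify the role of the hypothesis $\tfrac{\partial f}{\partial y}(\cdot,y)\in L^{\infty}(\Omega)$ in keeping the constants independent of $\bar y_{\ell}$, which the paper highlights in the remark following the lemma.
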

\begin{proof}
Let $\varphi \in H_0^1(\Omega)$ be the unique solution to
\begin{align*}
    (\nabla \varphi, \nabla v)_\Omega + \left(\frac{\partial f}{\partial y}(\cdot, y_{\theta})\varphi,v\right)_{\Omega} = (y_{\bar{\mathfrak{u}}_{\ell}} - \bar{y}_{\ell}, v)_{\Omega} \qquad \forall v \in H_0^1(\Omega),
\end{align*}
where $y_{\theta} = \bar{y}_{\ell} + \theta(y_{\bar{\mathfrak{u}}_{\ell}} - \bar{y}_{\ell})$, with $\theta\in (0,1)$, satisfies $\frac{\partial f}{\partial y}(\cdot, y_{\theta})(y_{\bar{\mathfrak{u}}_{\ell}} - \bar{y}_{\ell})= f(\cdot,y_{\bar{\mathfrak{u}}_{\ell}}) - f(\cdot,\bar{y}_{\ell})$.
Hence, choosing $v = y_{\bar{\mathfrak{u}}_{\ell}} - \bar{y}_{\ell}$ in the previous equation, and invoking the Galerkin orthogonality between $y_{\bar{\mathfrak{u}}_{\ell}}$ and $\bar{y}_{\ell}$, we obtain 
\begin{align*}
\|y_{\bar{\mathfrak{u}}_{\ell}} - \bar{y}_{\ell}\|_{\Omega}^2 
= & \,
(\nabla \varphi, \nabla (y_{\bar{\mathfrak{u}}_{\ell}} - \bar{y}_{\ell}))_\Omega + \left(\frac{\partial f}{\partial y}(\cdot, y_{\theta})\varphi,y_{\bar{\mathfrak{u}}_{\ell}} - \bar{y}_{\ell}\right)_{\Omega} \\
= & \,
(\nabla \varphi, \nabla (y_{\bar{\mathfrak{u}}_{\ell}} - \bar{y}_{\ell}))_\Omega + (\varphi, f(\cdot,y_{\bar{\mathfrak{u}}_{\ell}}) - f(\cdot,\bar{y}_{\ell}))_{\Omega} \\
= & \,
(\nabla (\varphi - \mathcal{L}_{\ell}\varphi), \nabla (y_{\bar{\mathfrak{u}}_{\ell}} - \bar{y}_{\ell}))_\Omega + (\varphi - \mathcal{L}_{\ell}\varphi, f(\cdot,y_{\bar{\mathfrak{u}}_{\ell}}) - f(\cdot,\bar{y}_{\ell}))_{\Omega}.
\end{align*}
Here, $\mathcal{L}_{\ell}: H^2(\Omega) \to \mathbb{V}_{\ell}$ denotes the classical Lagrange interpolator \cite{MR2373954,MR2050138}.
Then, the use of an elementwise integration by parts formula and standard approximation
properties for $\mathcal{L}_{\ell}$ results in 
\begin{align*}
   \|y_{\bar{\mathfrak{u}}_{\ell}} - \bar{y}_{\ell}\|_{\Omega}^2 
   \lesssim 
     \sum_{T\in \mathcal{T}_{\ell}}\left(h_T^2\| \bar{\mathfrak{u}}_{\ell} - f(\cdot,\bar{y}_{\ell})\|_{T}\|\varphi\|_{H^2(T)} + \sum_{e\in \mathcal{E}_{T}}h_T^{\frac{3}{2}}\|\llbracket \nabla \bar{y}_{\ell}\cdot \mathbf{n} \rrbracket\|_e\|\varphi\|_{H^2(T)}\right).
\end{align*}
We conclude the proof in view of Cauchy-Schwarz inequality and the bound $\|\varphi\|_{H^2(\Omega)} \lesssim \|y_{\bar{\mathfrak{u}}_{\ell}} - \bar{y}_{\ell}\|_{\Omega}$ (cf. Theorem \ref{thm:wp_and_stab_linear}).
\end{proof}

\begin{remark}[on assumption $\frac{\partial f}{\partial y}(\cdot, y)\in L^{\infty}(\Omega)$ for all $y\in H_0^1(\Omega)$]
Let $y_{\theta} = \bar{y}_{\ell} + \theta(y_{\bar{\mathfrak{u}}_{\ell}} - \bar{y}_{\ell})$, with $\theta\in (0,1)$ be given as in the proof of Lemma \ref{lemma:rel_state}. 
Since $\bar{y}_{\ell}$ is not necessarily uniformly bounded in $L^{\infty}(\Omega)$ when $\ell$ increases, we cannot ensure that $\|y_{\theta}\|_{L^{\infty}(\Omega)} \leq C$ uniformly for some $C>0$.
Therefore, such an assumption is required to have, in \eqref{eq:reliability_state_l2}, a hidden constant independent of $\|\bar{y}_{\ell}\|_{L^{\infty}(\Omega)}$.
\end{remark}


\subsubsection{A posteriori error estimates: adjoint state equation}

Let $\bar{\mathfrak{u}}_{\ell}$ be a local minimum of the semidiscrete optimal control problem and $\bar{y}_{\ell}$ its associated discrete optimal state.
We introduce the auxiliary variable $p_{\bar{\mathfrak{u}}_{\ell}} \in H_0^1(\Omega)$ as the unique solution to
\begin{align}\label{eq:aux_adjoint_apost}
(\nabla v,\nabla p_{\bar{y}_{\ell}})_{\Omega} + \left(\frac{\partial f}{\partial y}(\cdot, \bar{y}_{\ell})p_{\bar{y}_{\ell}},v\right)_{\Omega} = (\bar{y}_{\ell} - y_{\Omega},v)_{\Omega} \quad \forall v \in H_0^1(\Omega).
\end{align}
We immediately notice that $\bar{p}_{\ell}$ corresponds to the finite element approximation of $p_{\bar{y}_{\ell}}$ in $\mathbb{V}_{\ell}$.

We define the global error estimator and local error indicators for the adjoint state equation as
\begin{align}\label{def:adjoint_indicator}
 {\eta}_{adj}:=\max_{T\in \mathcal{T}_{\ell}}\eta_{adj,T}, \qquad
\eta_{adj,T} := h_T^{2-\frac{d}{2}}\| \bar{y}_{\ell} - y_{\Omega} - \tfrac{\partial f}{\partial y}(\cdot, \bar{y}_{\ell})\bar{p}_{\ell}\|_{T} + h_T\max_{e\in\mathcal{E}_{T}}\|\llbracket \nabla \bar{p}_{\ell}\cdot \mathbf{n} \rrbracket\|_{L^\infty(e)}.
\end{align} 

To present the global reliability result below, we introduce the term
\begin{equation*}
\iota_{\ell}:= \left|\log\bigg(\max_{T\in\mathcal{T}_{\ell}}\frac{1}{h_T}\bigg)\right|.
\end{equation*}

\begin{lemma}[reliability: adjoint state equation]\label{lemma:rel_adj}
Let Assumption \ref{A1} hold.
Let $\bar{\mathfrak{u}}_{\ell}$ be a local minimum of the semidiscrete optimal control problem with $\bar{y}_{\ell}$ and $\bar{p}_{\ell}$ being the corresponding state and adjoint state, respectively. 
If $\frac{\partial f}{\partial y}(\cdot, y)\in L^{\infty}(\Omega)$ for all $y\in H_0^1(\Omega)$, then
\begin{align*}
    \|p_{\bar{y}_{\ell}} - \bar{p}_{\ell}\|_{L^{\infty}(\Omega)} 
    \lesssim
    \iota_{\ell}\eta_{adj},
\end{align*}
where the hidden constant is independent of $p_{\bar{\mathfrak{u}}_{\ell}}$ and $\bar{p}_{\ell}$, the size of the elements in the mesh $\mathcal{T}_{\ell}$, and $\#\mathcal{T}_{\ell}$.
\end{lemma}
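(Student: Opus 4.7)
My plan is to adapt the classical Green's function duality argument for pointwise a posteriori estimates, applied to the linearized adjoint operator $-\Delta + a_0$, where $a_0 := \frac{\partial f}{\partial y}(\cdot,\bar y_\ell)\in L^\infty(\Omega)$ by assumption. Write $e_\ell := p_{\bar y_\ell} - \bar p_\ell$; Lemma \ref{thm:wp_and_stab_linear} applied to \eqref{eq:aux_adjoint_apost} with $z = \bar y_\ell - y_\Omega \in L^2(\Omega)$ gives $p_{\bar y_\ell}\in H_0^1(\Omega)\cap C(\bar\Omega)$, hence $e_\ell\in H_0^1(\Omega)\cap C(\bar\Omega)$. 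I would pick $x_0\in\bar\Omega$ with $|e_\ell(x_0)|=\|e_\ell\|_{L^\infty(\Omega)}$, take $T_0\in\mathcal{T}_\ell$ containing $x_0$, and introduce a regularized delta $\delta_0$ supported near $x_0$ on scale $h_{T_0}$, satisfying $\int_\Omega \delta_0 = \mathrm{sign}(e_\ell(x_0))$ and $\|\delta_0\|_{L^\infty(\Omega)}\lesssim h_{T_0}^{-d}$, arranged so that $|(\delta_0,e_\ell)_\Omega| \gtrsim \|e_\ell\|_{L^\infty(\Omega)}$ modulo a correction absorbable into $\eta_{adj}$; this step exploits the continuity of $e_\ell$ together with the piecewise-linear structure of $\bar p_\ell$ on $T_0$.

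Next I would introduce the regularized Green's function $G\in H_0^1(\Omega)\cap H^2(\Omega)$ given by Lemma \ref{thm:wp_and_stab_linear} as the unique solution of $(\nabla v,\nabla G)_\Omega + (a_0 G,v)_\Omega = (\delta_0,v)_\Omega$ for all $v\in H_0^1(\Omega)$. Testing with $v = e_\ell$, invoking the Galerkin orthogonality $(\nabla e_\ell,\nabla v_\ell)_\Omega + (a_0 e_\ell,v_\ell)_\Omega = 0$ for all $v_\ell\in\mathbb{V}_\ell$ (from subtracting \eqref{eq:discrete_adj_eq} from \eqref{eq:aux_adjoint_apost}) to replace $G$ by $G - \mathcal{L}_\ell G$, and performing element-wise integration by parts while using $\Delta\bar p_\ell \equiv 0$ on each $T$, one arrives at the residual representation
\begin{align*}
(\delta_0, e_\ell)_\Omega = \sum_{T\in\mathcal{T}_\ell}(\bar y_\ell - y_\Omega - a_0\bar p_\ell, G - \mathcal{L}_\ell G)_T - \sum_{e\in\mathcal{E}_\ell}(\llbracket\nabla\bar p_\ell\cdot\mathbf n\rrbracket, G - \mathcal{L}_\ell G)_e.
\end{align*}

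Now I would estimate each contribution. For the element term, Cauchy-Schwarz together with $\|G-\mathcal{L}_\ell G\|_T \lesssim h_T^2 |G|_{H^2(T)}$ gives the bound $h_T^{2-d/2}\|R_T\|_T \cdot h_T^{d/2}|G|_{H^2(T)}$, with $R_T := \bar y_\ell - y_\Omega - a_0\bar p_\ell$. For the edge term, a trace inequality combined with the Sobolev embedding $H^2(T)\hookrightarrow L^\infty(T)$ (valid in $d\in\{2,3\}$) yields $\|G-\mathcal{L}_\ell G\|_{L^1(e)}\lesssim h_T^{d/2+1}|G|_{H^2(T)}$, producing the bound $h_T\|\llbracket\nabla\bar p_\ell\cdot\mathbf n\rrbracket\|_{L^\infty(e)}\cdot h_T^{d/2}|G|_{H^2(T)}$. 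Extracting the two indicator factors from \eqref{def:adjoint_indicator}, the remaining Green's-function weight is $\sum_T h_T^{d/2}|G|_{H^2(T)}$, which I would bound by $\iota_\ell$ using the pointwise decay $|D^2 G(x)|\lesssim \max(|x-x_0|, h_{T_0})^{-d}$ known for Green's functions of uniformly elliptic operators with $L^\infty$ coefficients on convex polyhedral domains (Frehse-Rannacher, Schatz-Wahlbin), combined with the elementary logarithmic summation $\sum_T h_T^d/r_T^d \lesssim \iota_\ell$ on shape-regular meshes.

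The principal technical obstacle lies in two classical but delicate ingredients: (i) making the pointwise and weighted $H^2$-estimates on the regularized Green's function uniform in the variable coefficient $a_0$ on a convex polyhedral domain (rather than a smooth domain), and (ii) calibrating the regularized delta $\delta_0$ carefully enough that $|(\delta_0,e_\ell)_\Omega|$ faithfully captures $\|e_\ell\|_{L^\infty(\Omega)}$ up to terms controlled by $\eta_{adj}$, which will rely on interior Hölder regularity of $p_{\bar y_\ell}$ coming from $H^2(\Omega)\hookrightarrow C^{0,2-d/2}(\Omega)$ together with the affine behavior of $\bar p_\ell$ on $T_0$.
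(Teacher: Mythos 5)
Your overall architecture is sound and genuinely different from the paper's: you build a regularized Green's function for the full linearized operator $-\Delta + a_0$ with $a_0 = \frac{\partial f}{\partial y}(\cdot,\bar y_\ell)$ and a mollified delta on the scale $h_{T_0}$, whereas the paper works with the \emph{exact} Green's function of the pure Laplacian (Appendix \ref{sec:appendix}) and absorbs the lower-order term by an additive correction: it introduces $\zeta$ solving $(\nabla\zeta,\nabla v)_\Omega = (a_0(p_{\bar y_\ell}-\bar p_\ell),v)_\Omega$, sets $\mathfrak e = (p_{\bar y_\ell}-\bar p_\ell)+\zeta$, and reduces to estimating $\mathfrak e(x_M) = (\nabla G,\nabla\mathfrak e)_\Omega$ using only the Laplacian bounds \eqref{eq:DG_est_rho}--\eqref{eq:D2G_est_rho}, with uniformity in $a_0$ coming for free from Lemma \ref{thm:wp_and_stab_linear}. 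Your residual representation and the splitting into element and jump contributions then coincide with the paper's \eqref{eq:error_e_I_II}.

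The genuine gap is in your final weight estimate $\sum_T h_T^{d/2}|G|_{H^2(T)}\lesssim\iota_\ell$, which you base on the pointwise decay $|D^2G(x)|\lesssim\max(|x-x_0|,h_{T_0})^{-d}$ citing Frehse--Rannacher and Schatz--Wahlbin. Those pointwise second-derivative bounds require (at least H\"older/Dini) smoothness of the coefficients; here $a_0$ is merely nonnegative and $L^\infty$, and moreover it changes with $\ell$, so any constant must be uniform in $a_0$ --- under these hypotheses $D^2G$ need not even be locally bounded away from $x_0$, and the claimed decay is not available. The estimate can be rescued in integrated form: on dyadic annuli $A_j$ of radius $2^jh_{T_0}$ one can combine the comparison principle (using $a_0\ge 0$) to get $\|G\|_{L^\infty(A_j)}\lesssim (2^jh_{T_0})^{2-d}$, write $-\Delta G = \delta_0 - a_0G$, and invoke convexity-based local $H^2$ estimates to obtain $\|D^2G\|_{L^2(A_j)}\lesssim (2^jh_{T_0})^{-d/2}$, after which Cauchy--Schwarz over each annulus and summation over the $O(\iota_\ell)$ annuli gives the logarithm --- but this is a different argument from the one you state, and it is precisely the technical content you deferred. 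A secondary loose end is the calibration $|(\delta_0,e_\ell)_\Omega|\gtrsim\|e_\ell\|_{L^\infty(\Omega)}$ up to estimator terms: the oscillation of $p_{\bar y_\ell}$ over a ball of radius $h_{T_0}$ is controlled by $H^2\hookrightarrow C^{0,2-d/2}$ as you say, but bounding that oscillation by $\eta_{adj}$ (rather than by a norm of the data) requires an explicit kick-back or a localized residual bound that you do not supply; the paper sidesteps this entirely by evaluating at the exact maximizer with the exact Green's function.
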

\begin{proof}
We introduce the auxiliary variable $\zeta \in H_0^{1}(\Omega)\cap H^{2}(\Omega)$ as the unique solution to
\begin{align}\label{eq:aux_zeta}
    (\nabla \zeta, \nabla v)_{\Omega} = \left(\frac{\partial f}{\partial y}(\cdot, \bar{y}_\ell)(p_{\bar{y}_{\ell}} - \bar{p}_{\ell}),v\right)_{\Omega} \quad \forall v\in H_0^{1}(\Omega).
\end{align}
Then, we define the term $\mathfrak{e} := (p_{\bar{y}_{\ell}} - \bar{p}_{\ell}) + \zeta$, and immediately note that $\mathfrak{e}\in W_0^{1,q}(\Omega)$ for some $q>d$, since $p_{\bar{y}_{\ell}},\zeta\in H^{2}(\Omega)$.
From the definition of  $\mathfrak{e}$ and problem \eqref{eq:aux_zeta}, we infer that $\zeta$ also solves 
\begin{align*}
    (\nabla \zeta, \nabla v)_{\Omega} + \left(\frac{\partial f}{\partial y}(\cdot, \bar{y}_\ell)\zeta,v\right)_{\Omega}= \left(\frac{\partial f}{\partial y}(\cdot, \bar{y}_\ell)\mathfrak{e},v\right)_{\Omega} \quad \forall v\in H_0^{1}(\Omega).
\end{align*}
Consequently, in view of Lemma \ref{thm:wp_and_stab_linear}, we have that $\|\zeta\|_{L^{\infty}(\Omega)} \lesssim \|\mathfrak{e}\|_{L^{\infty}(\Omega)}$, which implies 
\begin{equation*}
     \|p_{\bar{y}_{\ell}} - \bar{p}_{\ell}\|_{L^{\infty}(\Omega)} 
     \lesssim
     \|\mathfrak{e}\|_{L^{\infty}(\Omega)},
\end{equation*}
with a hidden constant that is independent of $\bar{y}_{\ell}$.
In what follows, we estimate $\|\mathfrak{e}\|_{L^{\infty}(\Omega)}$ based on the arguments developed in the proof of \cite[Lemma 4.2]{MR3878607}. 

Let $x_{\mathrm{M}}\in \Omega$ be such that $\|\mathfrak{e}\|_{L^{\infty}(\Omega)} = |\mathfrak{e}(x_{\mathrm{M}})|$.
Let $G=G(x_{\mathrm{M}},\cdot)$ be the Green function defined in Appendix \ref{sec:appendix} and let $G_{\ell}(x_{\mathrm{M}},\cdot) = G_{\ell} \in \mathbb{V}_{\ell} $ be some suitable quasi-interpolation of this function by using, e.g., the Scott-Zhang interpolant (see \cite{MR1011446} and \cite[Section 4.8]{MR2373954}).
The use of representation \eqref{eq:Green_rep}, a density argument that allows us to choose $v=G$ in \eqref{eq:aux_zeta}, even when $G\notin H_0^{1}(\Omega)$, and Galerkin orthogonality yield
\begin{align}\label{eq:identity_green_adjoint}
    \mathfrak{e}(x_{\mathrm{M}})
    = (\nabla G, \nabla \mathfrak{e})_{\Omega} 
    = & \,  (\nabla G, \nabla (p_{\bar{y}_{\ell}} - \bar{p}_{\ell}))_{\Omega} +  \left(\tfrac{\partial f}{\partial y}(\cdot, \bar{y}_\ell)(p_{\bar{y}_{\ell}} - \bar{p}_{\ell}),G\right)_{\Omega} \\
    = & \,  (\nabla (G - G_{\ell}), \nabla (p_{\bar{y}_{\ell}} - \bar{p}_{\ell}))_{\Omega} + \left(\tfrac{\partial f}{\partial y}(\cdot, \bar{y}_{\ell})(G - G_{\ell}),p_{\bar{y}_{\ell}} - \bar{p}_{\ell}\right)_{\Omega}. \nonumber
\end{align}
The same aforementioned density argument can be used to conclude that $v=G$ in \eqref{eq:aux_adjoint_apost} is also well-defined.
This, in combination with an elementwise integration by parts in \eqref{eq:identity_green_adjoint}, results in
\begin{equation}\label{eq:error_e_I_II}
      \mathfrak{e}(x_{\mathrm{M}})
      = 
      \sum_{T\in \mathcal{T}_{\ell}} \left( \bar{y}_{\ell} - y_{\Omega} - \tfrac{\partial f}{\partial y}(\cdot, \bar{y}_{\ell})\bar{p}_{\ell}, G - G_{\ell}\right)_{T} - \sum_{e\in \mathcal{E}_{\ell}}(\llbracket \nabla \bar{p}_{\ell}\cdot \mathbf{n}\rrbracket, G - G_{\ell})_{e} = : \mathsf{I} + \mathsf{II}.
\end{equation}

We bound the term $\mathsf{I}$. The use Cauchy-Schwarz inequality yields
\begin{equation*}
    |\mathsf{I}| \leq \sum_{T\in \mathcal{T}_{\ell}}\|\bar{y}_{\ell} - y_{\Omega} - \tfrac{\partial f}{\partial y}(\cdot, \bar{y}_{\ell})\bar{p}_{\ell}\|_{T} \| G - G_{\ell}\|_{T}.
\end{equation*}
We now estimate the sum in the right-hand side of the previous expression on two different subsets of $\mathcal{T}_{\ell}$, namely, $\mathcal{M}_{x_{M}}:=\{T\in \mathcal{T}_{\ell} : x_{M} \in \mathcal{M}_{T}\}$ and $\mathcal{M}^{c}_{x_{M}}:= \mathcal{T}_{\ell}\setminus \mathcal{M}_{x_{M}}$, where $\mathcal{M}_T:= \left \{ T^{\prime}\in\mathcal{T}_{\ell}: T\cap T^\prime\neq\emptyset \right \}$ for every $T\in \mathcal{T}_{\ell}$.
We begin by estimating over the elements in $\mathcal{M}^{c}_{x_{M}}$. 
The use of standard approximation results (\cite[Section 4]{MR1011446}, see also \cite[Proposition 5.1]{MR3076038}) yields
\begin{align*}
    \|G - G_{\ell}\|_{T} \lesssim h_T^{2-\tfrac{d}{2}}\|D^{2}G\|_{L^1(\mathcal{M}_{T})} \quad \forall T\in \mathcal{M}^{c}_{x_{M}}.
\end{align*}
Hence, the finite intersection property of patches and estimate \eqref{eq:D2G_est_rho} with $\omega_{\rho}(x_M)$ being the biggest ball satisfying $\omega_\rho(x_{M})\subset \cup_{T\in \mathcal{T}_{\ell}:x_{M}\in T}T$, in which case $\min_{T'\in \mathcal{T}_{\ell}}h_{T'}\lesssim \rho$, give
\begin{align*}
    \sum_{T\in \mathcal{M}_{x_{M}}^{c}}\|\bar{y}_{\ell} - y_{\Omega} - \tfrac{\partial f}{\partial y}(\cdot, \bar{y}_{\ell})\bar{p}_{\ell}\|_{T} \| G - G_{\ell}\|_{T}
    \lesssim & \,
    \max_{T\in\mathcal{T}_{\ell}}\left(h_{T}^{2-\tfrac{d}{2}}\|\bar{y}_{\ell} - y_{\Omega} - \tfrac{\partial f}{\partial y}(\cdot, \bar{y}_{\ell})\bar{p}_{\ell}\|_{T}\right)\sum_{T\in \mathcal{M}_{x_{M}}^{c}}\|D^{2}G\|_{L^1(\mathcal{M}_{T})} \\
    \lesssim & \,
    \iota_{\ell}\eta_{adj}.
\end{align*}
To estimate the sum in $\mathcal{M}_{x_{M}}$, we invoke approximation properties of quasi-interpolators and estimate \eqref{eq:DG_est_rho}.
Hence, we obtain $\|G - G_{\ell}\|_{T}  \lesssim \| \nabla G\|_{L^{2d/(d+2)}(\mathcal{M}_{T})} \lesssim h_{T}^{2 - \frac{d}{2}}$.  
Finally, since the number of elements in $\mathcal{M}_{x_{M}}$ is uniformly bounded, we obtain that
\begin{align*}
    \sum_{T\in \mathcal{M}_{x_{M}}}\|\bar{y}_{\ell} - y_{\Omega} - \tfrac{\partial f}{\partial y}(\cdot, \bar{y}_{\ell})\|_{T}\|G - G_{\ell}\|_{T}
    \lesssim
    \sum_{T\in \mathcal{M}_{x_{M}}}h_T^{2-\tfrac{d}{2}}\|\bar{y}_{\ell} - y_{\Omega} - \tfrac{\partial f}{\partial y}(\cdot, \bar{y}_{\ell})\|_{T} 
    \lesssim
    \eta_{adj}.
\end{align*}
The previous estimates thus imply that $|\mathsf{I}| \lesssim \iota_{\ell}\eta_{adj}$.

To estimate $\mathsf{II}$ in \eqref{eq:error_e_I_II}, we write $|\mathsf{II}| \leq \sum_{e\in \mathcal{E}_{\ell}} \|\llbracket \nabla \bar{p}_{\ell}\cdot \mathbf{n}\rrbracket\|_{L^{\infty}(e)}\|G - G_{\ell}\|_{L^{1}(e)}$; we shall estimate the latter by using again the sets $\mathcal{M}_{x_{M}}$ and $\mathcal{M}^{c}_{x_{M}}$.
On the one hand, consider the case when $e\in \mathcal{E}_{T}$ with $T\in \mathcal{M}^{c}_{x_{M}}$.
The use of the scaled trace inequality $\|G - G_{\ell}\|_{L^{1}(e)} \lesssim h_T^{-1}\|G - G_{\ell}\|_{L^{1}(T)} + \|\nabla (G - G_{\ell})\|_{L^{1}(T)}$ \cite[Section 3.3]{MR3059294}, in combination with standard approximation properties (\cite[Section 4]{MR1011446}, see also \cite[Proposition 5.1]{MR3076038}), results in $\|G - G_{\ell}\|_{L^1(e)} \lesssim h_T\| D^2 G\|_{L^{1}(\mathcal{M}_{T})}$.
Consequently, in view of estimate \eqref{eq:D2G_est_rho}, we conclude that
\begin{align*}
    \sum_{e\in \mathcal{E}_{T}: T \in \mathcal{M}^{c}_{x_{M}}}\|\llbracket \nabla \bar{p}_{\ell}\cdot \mathbf{n}\rrbracket\|_{L^{\infty}(e)}\|G - G_{\ell}\|_{L^{1}(e)}
    \lesssim
    \eta_{adj}\sum_{T \in \mathcal{M}^{c}_{x_{M}}}\|D^2G\|_{L^1(\mathcal{M}_{T})}
    \lesssim 
    \iota_{\ell}\eta_{adj}.
\end{align*}
On the other hand, to estimate the product $\|\llbracket \nabla \bar{p}_{\ell}\cdot \mathbf{n}\rrbracket\|_{L^{\infty}(e)}\|G - G_{\ell}\|_{L^{1}(e)}$ when $e\in \mathcal{E}_{T}$ with $T\in \mathcal{M}_{x_{M}}$, we use that $G\in W^{1,(d+1)/d}(\Omega)$ (note that $(d+1)/d < d/(d-1)$ and $G\in W^{1,\mathsf{p}}(\Omega)$ with $\mathsf{p} < d/(d-1)$), an application of the inequality $\|G - G_{\ell}\|_{L^{1}(e)} \lesssim h_T^{-1}\|G - G_{\ell}\|_{L^{1}(T)} + \|\nabla (G - G_{\ell})\|_{L^{1}(T)}$, and standard approximation estimates. 
These arguments yield 
\begin{equation*}
    \|\llbracket \nabla \bar{p}_{\ell}\cdot \mathbf{n}\rrbracket\|_{L^{\infty}(e)}\|G - G_{\ell}\|_{L^{1}(e)}
    \lesssim 
    h_T^{d-d^2/(d+1)}\|\llbracket \nabla \bar{p}_{\ell}\cdot \mathbf{n}\rrbracket\|_{L^{\infty}(e)}\|\nabla G\|_{L^{(d+1)/d}(\mathcal{M}_{T})}.
\end{equation*}
This, in combination with the estimate $\|\nabla G\|_{L^{(d+1)/d}(\mathcal{M}_{T})} \lesssim h_{T}^{1-d+d^2/(d+1)}$ (cf. estimate \eqref{eq:DG_est_rho}), gives us
\begin{align*}
    \sum_{e\in \mathcal{E}_{T}: T\in \mathcal{M}_{x_{M}}}\|\llbracket \nabla \bar{p}_{\ell}\cdot\mathbf{n}\rrbracket\|_{L^{\infty}(e)}\|G - G_{\ell}\|_{e}
    \lesssim
    \sum_{T\in \mathcal{M}_{x_M}}h_T\|\llbracket \nabla \bar{p}_{\ell}\cdot \mathbf{n}\rrbracket\|_{L^{\infty}(\partial T\setminus \partial \Omega)}
    \lesssim
    \eta_{adj},
\end{align*}
where, in the last inequality, we have used that $\#\mathcal{M}_{x_M}$ is uniformly bounded.
Therefore, $|\mathsf{II}| \lesssim \iota_{\ell}\eta_{adj}$.

We conclude the desired result using the estimates obtained for $\mathsf{I}$ and $\mathsf{II}$ into \eqref{eq:error_e_I_II}.
\end{proof}


\subsubsection{A posteriori error estimates: optimal control problem}

We propose the a posteriori error estimator
\begin{align}\label{def:total_estimator}
    \eta_{ocp}:= \eta_{st} + \eta_{adj},
\end{align}
where $\eta_{st}$ and $\eta_{adj}$ are defined in \eqref{def:state_indicator} and \eqref{def:adjoint_indicator}, respectively.

\begin{theorem}[global reliability]\label{thm:global_rel_ocp}
Let Assumption \ref{A1} hold.
Let $\bar{u}\in U_{ad}$ be a local solution to \eqref{def:weak_ocp}--\eqref{eq:weak_st_eq} such that it satisfies assumption \eqref{eq:assumption_control_growth}.
Let $\bar{\mathfrak{u}}_{\ell}$ be a local minimum to the semidiscrete optimal control problem with $\bar{y}_{\ell}$ and  $\bar{p}_{\ell}$ being the corresponding state and adjoint state, respectively. 
If $\|\bar{u} - \bar{\mathfrak{u}}_{\ell}\|_{L^1(\Omega)} < \min\{\delta_{\frac{\kappa}{2}},\alpha\}$, $\frac{\partial f}{\partial y}(\cdot, y)\in L^{\infty}(\Omega)$ for all $y\in H_0^1(\Omega)$, and $\frac{\partial f}{\partial y}(\cdot, y)$ is globally Lipschitz with respect to $y\in \mathbb{R}$, then
\begin{equation}\label{eq:global_reliab}
    \|\bar{u} - \bar{\mathfrak{u}}_{\ell}\|_{L^1(\Omega)} 
    \lesssim
    (1 + \iota_{\ell})^{\gamma}\eta_{ocp}^{\gamma},
\end{equation}
and 
\begin{equation}\label{eq:global_reliab_st_adj}
    \|\bar{y}-\bar{y}_{\ell}\|_{\Omega} \lesssim  (1 + \iota_{\ell})^{\gamma}\eta_{ocp}^{\gamma} + \eta_{st}, \qquad
    \|\bar{p}-\bar{p}_{\ell}\|_{L^{\infty}(\Omega)} \lesssim (1 + \iota_{\ell})^{\gamma}\eta_{ocp}^{\gamma} +  (1 + \iota_{\ell})\eta_{ocp}.
\end{equation}
The hidden constants in \eqref{eq:global_reliab} and \eqref{eq:global_reliab_st_adj} are independent of the continuous and discrete optimal variables, the size of the elements in the mesh $\mathcal{T}_{\ell}$, and $\#\mathcal{T}_{\ell}$.
\end{theorem}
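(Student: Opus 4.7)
The plan is to use the growth condition \eqref{eq:assumption_control_growth} in combination with the continuous and discrete first-order variational inequalities to reduce the control error to an $L^\infty$-discrepancy of adjoint states, and then to control that discrepancy via the already-established Lemmas \ref{lemma:rel_state} and \ref{lemma:rel_adj}.

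Set $z := \bar{\mathfrak{u}}_\ell - \bar u$. Since by hypothesis $\|z\|_{L^1(\Omega)} < \min\{\delta_{\kappa/2},\alpha\}$, both \eqref{eq:assumption_control_growth} and \eqref{eq:sec_var_est} with $\varepsilon = \kappa/2$ apply. The $C^2$-regularity of $J$ from Proposition \ref{T3.1} yields the mean-value identity $J'(\bar{\mathfrak{u}}_\ell)z - J'(\bar u)z = J''(\bar u_\theta)z^2$ for some $\theta \in (0,1)$, with $\bar u_\theta := \bar u + \theta z$; rearranging gives
\[
J'(\bar u)z + J''(\bar u)z^2 = J'(\bar{\mathfrak{u}}_\ell)z - [J''(\bar u_\theta) - J''(\bar u)]z^2.
\]
Combining \eqref{eq:assumption_control_growth} on the left with \eqref{eq:sec_var_est} to absorb the perturbation bracket on the right produces $\tfrac{\kappa}{2}\|z\|_{L^1(\Omega)}^{1+1/\gamma} \le J'(\bar{\mathfrak{u}}_\ell)z$. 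Rewriting $J'(\bar{\mathfrak{u}}_\ell)z = (p_{\bar{\mathfrak{u}}_\ell}, z)_\Omega$ via Proposition \ref{T3.1}, inserting $\pm\bar p_\ell$, and using that the discrete variational inequality \eqref{eq:semi_var_ineq} tested at $u = \bar u$ yields $(\bar p_\ell, z)_\Omega \le 0$, one arrives at
\[
\tfrac{\kappa}{2}\|z\|_{L^1(\Omega)}^{1+\frac{1}{\gamma}} \le \|p_{\bar{\mathfrak{u}}_\ell} - \bar p_\ell\|_{L^\infty(\Omega)}\, \|z\|_{L^1(\Omega)}.
\]

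To estimate the adjoint discrepancy I pass through the auxiliary variable $p_{\bar y_\ell}$ defined in \eqref{eq:aux_adjoint_apost}: $\|p_{\bar{\mathfrak{u}}_\ell} - \bar p_\ell\|_{L^\infty(\Omega)} \le \|p_{\bar{\mathfrak{u}}_\ell} - p_{\bar y_\ell}\|_{L^\infty(\Omega)} + \|p_{\bar y_\ell} - \bar p_\ell\|_{L^\infty(\Omega)}$. The second summand is $\lesssim \iota_\ell\eta_{adj}$ by Lemma \ref{lemma:rel_adj}. For the first, I subtract the equations satisfied by $p_{\bar{\mathfrak{u}}_\ell}$ and $p_{\bar y_\ell}$, regroup so that $\tfrac{\partial f}{\partial y}(\cdot,\bar y_\ell)(p_{\bar{\mathfrak{u}}_\ell} - p_{\bar y_\ell})$ remains on the left, and observe that the resulting right-hand side equals $(y_{\bar{\mathfrak{u}}_\ell} - \bar y_\ell) - \bigl[\tfrac{\partial f}{\partial y}(\cdot, y_{\bar{\mathfrak{u}}_\ell}) - \tfrac{\partial f}{\partial y}(\cdot, \bar y_\ell)\bigr]p_{\bar{\mathfrak{u}}_\ell}$. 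The global Lipschitz hypothesis on $\tfrac{\partial f}{\partial y}$, the uniform $L^\infty$-bound on $p_{\bar{\mathfrak{u}}_\ell}$ (a consequence of Theorem \ref{contandregsem} and Lemma \ref{thm:wp_and_stab_linear}), and Lemma \ref{thm:wp_and_stab_linear} itself applied with $r = 2 > d/2$ for $d\in\{2,3\}$ then give $\|p_{\bar{\mathfrak{u}}_\ell} - p_{\bar y_\ell}\|_{L^\infty(\Omega)} \lesssim \|y_{\bar{\mathfrak{u}}_\ell} - \bar y_\ell\|_\Omega$, which Lemma \ref{lemma:rel_state} bounds by $\eta_{st}$. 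Hence $\|p_{\bar{\mathfrak{u}}_\ell} - \bar p_\ell\|_{L^\infty(\Omega)} \lesssim (1+\iota_\ell)\eta_{ocp}$; substituting back into the previous display and dividing by $\|z\|_{L^1(\Omega)}$ produces $\|z\|_{L^1(\Omega)}^{1/\gamma} \lesssim (1+\iota_\ell)\eta_{ocp}$, and raising to the $\gamma$-th power yields \eqref{eq:global_reliab}.

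The state bound in \eqref{eq:global_reliab_st_adj} follows from $\|\bar y - \bar y_\ell\|_\Omega \le \|\bar y - y_{\bar{\mathfrak{u}}_\ell}\|_\Omega + \|y_{\bar{\mathfrak{u}}_\ell} - \bar y_\ell\|_\Omega$: the second summand is $\lesssim \eta_{st}$ by Lemma \ref{lemma:rel_state}, while the first is dispatched by linearizing the semilinear state equation via a mean-value expansion of $f$ and applying Lemma \ref{lemma:stab_Ls} with $s = 2$ (admissible for $d\in\{2,3\}$) to obtain $\|\bar y - y_{\bar{\mathfrak{u}}_\ell}\|_\Omega \lesssim \|\bar u - \bar{\mathfrak{u}}_\ell\|_{L^1(\Omega)}$. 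The adjoint bound is produced by the triple decomposition $\|\bar p - \bar p_\ell\|_{L^\infty(\Omega)} \le \|\bar p - p_{\bar{\mathfrak{u}}_\ell}\|_{L^\infty(\Omega)} + \|p_{\bar{\mathfrak{u}}_\ell} - p_{\bar y_\ell}\|_{L^\infty(\Omega)} + \|p_{\bar y_\ell} - \bar p_\ell\|_{L^\infty(\Omega)}$, where the two rightmost pieces are already controlled and the first is handled by the same Lipschitz linearization just used, now applied to the pair $(\bar y, y_{\bar{\mathfrak{u}}_\ell})$, yielding $\|\bar p - p_{\bar{\mathfrak{u}}_\ell}\|_{L^\infty(\Omega)} \lesssim \|\bar u - \bar{\mathfrak{u}}_\ell\|_{L^1(\Omega)}$. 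The main obstacle I anticipate is the clean passage from $J'(\bar{\mathfrak{u}}_\ell)z$ to a quantity involving only the discrete adjoint $\bar p_\ell$: because Lemma \ref{lemma:rel_adj} targets $p_{\bar y_\ell}$ rather than $p_{\bar{\mathfrak{u}}_\ell}$, one is forced to traverse the auxiliary adjoint, and the coefficient mismatch $\tfrac{\partial f}{\partial y}(\cdot, y_{\bar{\mathfrak{u}}_\ell}) - \tfrac{\partial f}{\partial y}(\cdot, \bar y_\ell)$ can only be absorbed into $\eta_{st}$ thanks to the global Lipschitz hypothesis on $\tfrac{\partial f}{\partial y}$, since $\bar y_\ell$ is not known to be uniformly bounded in $L^\infty(\Omega)$ as $\ell$ grows.
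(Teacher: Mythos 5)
Your proposal is correct and follows essentially the same route as the paper's proof: the growth condition \eqref{eq:assumption_control_growth} plus \eqref{eq:sec_var_est} and the two variational inequalities reduce the control error to $\|p_{y_{\bar{\mathfrak{u}}_\ell}}-\bar p_\ell\|_{L^\infty(\Omega)}$, which is then split through the auxiliary adjoint $p_{\bar y_\ell}$ and bounded via Lemmas \ref{lemma:rel_state}, \ref{lemma:rel_adj}, \ref{thm:wp_and_stab_linear}, and \ref{lemma:stab_Ls}. The only (immaterial) deviations are which linearization coefficient you keep on the left in the adjoint comparison and the use of a three-term rather than two-term triangle inequality in the final adjoint estimate.
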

\begin{proof}
We proceed in three steps.

\underline{Step 1.} (estimation of $\|\bar{u} - \bar{\mathfrak{u}}_{\ell}\|_{L^1(\Omega)}$)
Choosing $u = \bar{u}$ in \eqref{eq:semi_var_ineq} and using the mean value theorem, we obtain
\begin{align*}
  0  \geq & \,  J_{\ell}^{\prime}(\bar{\mathfrak{u}}_{\ell})(\bar{\mathfrak{u}}_{\ell} - \bar{u}) \\
   \geq & \, [J'(\bar{u})(\bar{\mathfrak{u}}_{\ell} - \bar{u}) + J''(\bar{u})(\bar{\mathfrak{u}}_{\ell} - \bar{u})^2] - |[J'(\bar{\mathfrak{u}}_{\ell}) - J'(\bar{u})](\bar{\mathfrak{u}}_{\ell} - \bar{u}) - J''(\bar{u})(\bar{\mathfrak{u}}_{\ell} - \bar{u})^2| + [J'_{\ell}(\bar{\mathfrak{u}}_{\ell}) - J'(\bar{\mathfrak{u}}_{\ell})](\bar{\mathfrak{u}}_{\ell} - \bar{u}) \\
 = & \, [J'(\bar{u})(\bar{\mathfrak{u}}_{\ell} - \bar{u}) + J''(\bar{u})(\bar{\mathfrak{u}}_{\ell} - \bar{u})^2]  - |[J''(u_{\theta})- J''(\bar{u})](\bar{\mathfrak{u}}_{\ell} - \bar{u})^2| + [J'_{\ell}(\bar{\mathfrak{u}}_{\ell}) - J'(\bar{\mathfrak{u}}_{\ell})](\bar{\mathfrak{u}}_{\ell} - \bar{u}),
\end{align*}
with $u_{\theta} = \bar{u} + \theta (\bar{\mathfrak{u}}_{\ell} - \bar{u})$ with $\theta\in (0,1)$.
The use of assumptions $\|\bar{u} - \bar{\mathfrak{u}}_{\ell}\|_{L^1(\Omega)} < \min\{\delta_{\frac{\kappa}{2}},\alpha\}$ and \eqref{eq:assumption_control_growth}, in combination with the local continuity property \eqref{eq:sec_var_est} (with $\varepsilon=\tfrac{\kappa}{2}$), yield
\begin{align}\label{eq:estimate_error_I}
    \frac{\kappa}{2}\|\bar{\mathfrak{u}}_{\ell} - \bar{u}\|_{L^1(\Omega)}^{1+\frac{1}{\gamma}} 
    \leq 
    [J'(\bar{\mathfrak{u}}_{\ell}) - J'_{\ell}(\bar{\mathfrak{u}}_{\ell}) ](\bar{\mathfrak{u}}_{\ell} - \bar{u}).
\end{align}
To estimate the term in the right-hand side of \eqref{eq:estimate_error_I}, we introduce the auxiliary variable $p_{y_{\bar{\mathfrak{u}}_{\ell}}} \in H_0^1(\Omega)$ as the unique solution to
\begin{align*}
(\nabla v,\nabla p_{y_{\bar{\mathfrak{u}}_{\ell}}})_{\Omega} + \left(\frac{\partial f}{\partial y}(\cdot, y_{\bar{\mathfrak{u}}_{\ell}})p_{y_{\bar{\mathfrak{u}}_{\ell}}},v\right)_{\Omega} = (y_{\bar{\mathfrak{u}}_{\ell}} - y_{\Omega},v)_{\Omega} \quad \forall v \in H_0^1(\Omega),
\end{align*}
where $y_{\bar{\mathfrak{u}}_{\ell}}\in H_0^1(\Omega)$ corresponds to the unique solution to \eqref{eq:aux_state_apost}.
We immediately note that $J'(\bar{\mathfrak{u}}_{\ell})(\bar{\mathfrak{u}}_{\ell} - \bar{u}) = (p_{y_{\bar{\mathfrak{u}}_{\ell}}},\bar{\mathfrak{u}}_{\ell} - \bar{u})_{\Omega}$, which implies, in light of \eqref{eq:estimate_error_I}, that 
\begin{align*}
 \frac{\kappa}{2}\|\bar{\mathfrak{u}}_{\ell} - \bar{u}\|_{L^1(\Omega)}^{1+\frac{1}{\gamma}} \leq (p_{y_{\bar{\mathfrak{u}}_{\ell}}} - \bar{p}_{\ell},\bar{\mathfrak{u}}_{\ell} - \bar{u})_{\Omega}.   
\end{align*}
We thus invoke the auxiliary variable $p_{\bar{y}_{\ell}}$, solution to \eqref{eq:aux_adjoint_apost}, and the a posteriori error estimate stated in Lemma \ref{lemma:rel_adj} to obtain
\begin{align}\label{eq:estimate_error_II}
   \|\bar{\mathfrak{u}}_{\ell} - \bar{u}\|_{L^1(\Omega)}^{\frac{1}{\gamma}} 
    \lesssim
    \|p_{y_{\bar{\mathfrak{u}}_{\ell}}} - p_{\bar{y}_{\ell}}\|_{L^{\infty}(\Omega)} + \|p_{\bar{y}_{\ell}} - \bar{p}_{\ell}\|_{L^{\infty}(\Omega)}
    \lesssim 
    \|p_{y_{\bar{\mathfrak{u}}_{\ell}}} - p_{\bar{y}_{\ell}}\|_{L^{\infty}(\Omega)} + \iota_{\ell}\eta_{adj}.
\end{align}
To estimate $\|p_{y_{\bar{\mathfrak{u}}_{\ell}}} - p_{\bar{y}_{\ell}}\|_{L^{\infty}(\Omega)}$, we first note that $p_{y_{\bar{\mathfrak{u}}_{\ell}}} - p_{\bar{y}_{\ell}} \in H_0^{1}(\Omega)\cap H^2(\Omega)$ is the solution to
\begin{align*}
    (\nabla v,\nabla (p_{y_{\bar{\mathfrak{u}}_{\ell}}} - \nabla p_{\bar{y}_{\ell}}))_{\Omega} + \left(\frac{\partial f}{\partial y}(\cdot, y_{\bar{\mathfrak{u}}_{\ell}})(p_{y_{\bar{\mathfrak{u}}_{\ell}}} - p_{\bar{y}_{\ell}}),v\right)_{\Omega} = (y_{\bar{\mathfrak{u}}_{\ell}} - \bar{y}_{\ell},v)_{\Omega} + \left(\left[\frac{\partial f}{\partial y}(\cdot, \bar{y}_{\ell}) - \frac{\partial f}{\partial y}(\cdot, y_{\bar{\mathfrak{u}}_{\ell}})\right]p_{\bar{y}_{\ell}},v\right)_{\Omega}
 \end{align*}
 for all $v \in H_0^1(\Omega)$.
 Then, an application of Theorem \ref{thm:wp_and_stab_linear}, the uniform boundedness of $\|p_{\bar{y}_{\ell}}\|_{L^{\infty}(\Omega)}$, and the Lipschitz property of $\frac{\partial f}{\partial y}(\cdot, y)$ with respect to $y\in\mathbb{R}$, imply that
 \begin{align}\label{eq:py-pyl_infty}
     \|p_{y_{\bar{\mathfrak{u}}_{\ell}}} - p_{\bar{y}_{\ell}}\|_{L^{\infty}(\Omega)}
     \lesssim
     \|y_{\bar{\mathfrak{u}}_{\ell}} - \bar{y}_{\ell}\|_{\Omega} + \|\tfrac{\partial f}{\partial y}(\cdot, \bar{y}_{\ell}) - \tfrac{\partial f}{\partial y}(\cdot, y_{\bar{\mathfrak{u}}_{\ell}})\|_{\Omega}
     \lesssim
     \|y_{\bar{\mathfrak{u}}_{\ell}} - \bar{y}_{\ell}\|_{\Omega}.
 \end{align}
Therefore, using the bound $\|y_{\bar{\mathfrak{u}}_{\ell}} - \bar{y}_{\ell}\|_{\Omega} \lesssim \eta_{st}$ (see Lemma \ref{lemma:rel_state}) in \eqref{eq:py-pyl_infty}, and the resulting estimate in \eqref{eq:estimate_error_II}, we conclude that 
\begin{align}\label{eq:estimate_control_u-ul}
    \|\bar{\mathfrak{u}}_{\ell} - \bar{u}\|_{L^1(\Omega)} 
    \lesssim
    (\eta_{st} + \iota_{\ell}\eta_{adj})^{\gamma}
    \leq
    (1 + \iota_{\ell})^{\gamma}\eta_{ocp}^{\gamma}.
\end{align}

\underline{Step 2.} (estimation of $\|\bar{y} - \bar{y}_{\ell}\|_{\Omega}$) The use of the triangle inequality and Lemma \ref{lemma:rel_state} results in
\begin{align}\label{eq:triangle_y-yl}
    \|\bar{y} - \bar{y}_{\ell}\|_{\Omega}
    \leq
    \|\bar{y} - y_{\bar{\mathfrak{u}}_{\ell}}\|_{\Omega} + \|y_{\bar{\mathfrak{u}}_{\ell}} - \bar{y}_{\ell}\|_{\Omega}
    \lesssim 
    \|\bar{y} - y_{\bar{\mathfrak{u}}_{\ell}}\|_{\Omega} + \eta_{st}.
\end{align}
We note that $\bar{y} - \bar{y}_{\bar{\mathfrak{u}}_{\ell}}\in H_0^{1}(\Omega)\cap H^2(\Omega)$ corresponds to the unique solution to
\begin{align*}
(\nabla v,\nabla (\bar{y} - \bar{y}_{\bar{\mathfrak{u}}_{\ell}}))_{\Omega} + \left(\frac{\partial f}{\partial y}(\cdot, y_{\theta})(\bar{y} - \bar{y}_{\bar{\mathfrak{u}}_{\ell}}),v\right)_{\Omega} = (\bar{u} - \bar{\mathfrak{u}}_{\ell},v)_{\Omega} \quad \forall v \in H_0^1(\Omega),
\end{align*}
with $y_{\theta} = \bar{y}_{\bar{\mathfrak{u}}_{\ell}} + \theta (\bar{y} - \bar{y}_{\bar{\mathfrak{u}}_{\ell}})$ for some $\theta\in (0,1)$.
Hence, from Lemma \ref{lemma:stab_Ls} it follows that $\|\bar{y} - \bar{y}_{\bar{\mathfrak{u}}_{\ell}}\|_{\Omega} \lesssim \| \bar{u} - \bar{\mathfrak{u}}_{\ell}\|_{L^1(\Omega)}$. 
Using the latter in \eqref{eq:triangle_y-yl} and invoking estimate \eqref{eq:estimate_control_u-ul}, we arrive at 
\begin{align*}
    \|\bar{y} - \bar{y}_{\ell}\|_{\Omega} 
    \lesssim 
    (1 + \iota_{\ell})^{\gamma}\eta_{ocp}^{\gamma} + \eta_{st}.
\end{align*}

\underline{Step 3.} (estimation of $\|\bar{p} - \bar{p}_{\ell}\|_{L^{\infty}(\Omega)}$)
The triangle inequality and Lemma \ref{lemma:rel_adj} yield
\begin{align*}
    \|\bar{p} - \bar{p}_{\ell}\|_{L^{\infty}(\Omega)}
    \leq
    \|\bar{p} - \bar{p}_{\bar{y}_{\ell}}\|_{L^{\infty}(\Omega)} + \|\bar{p}_{\bar{y}_{\ell}} - \bar{p}_{\ell}\|_{L^{\infty}(\Omega)}
    \lesssim 
    \|\bar{p} - \bar{p}_{\bar{y}_{\ell}}\|_{L^{\infty}(\Omega)} + \iota_{\ell}\eta_{adj}.
\end{align*}
Using similar arguments to those that lead to \eqref{eq:py-pyl_infty} allows us to obtain $\|\bar{p} - \bar{p}_{\bar{y}_{\ell}}\|_{L^{\infty}(\Omega)} \lesssim \|\bar{y} - \bar{y}_{\ell}\|_{\Omega}$.
This bound, in conjunction with $\|\bar{y} - \bar{y}_{\ell}\|_{\Omega} \lesssim (1 + \iota_{\ell})^{\gamma}\eta_{ocp}^{\gamma} + \eta_{st}$, gives as a result the desired estimate.
\end{proof}

\begin{remark}[Case $\gamma=1$]
    If $\gamma = 1$, then we have the global reliability estimate
    \begin{align*}
    \|\bar{u} - \bar{\mathfrak{u}}_{\ell}\|_{L^1(\Omega)}   +  \|\bar{y}-\bar{y}_{\ell}\|_{\Omega}  + \|\bar{p}-\bar{p}_{\ell}\|_{L^{\infty}(\Omega)}
    \lesssim \iota_{\ell}\eta_{ocp}.
    \end{align*}
    In particular, the error estimator $\eta_{ocp}$ is an upper bound for the total error associated with the optimal control problem \eqref{def:weak_ocp}--\eqref{eq:weak_st_eq}.
\end{remark}


\subsubsection{Comparison with the error of the regularized problem}

Let $\lambda>0$. We consider the Tikhonov regularized problem
\begin{align}\label{ocp:Tik}
\min_{ u \in U_{ad}} \left\{ J_\lambda(u):=\nicefrac{1}{2}\|y_{u} - y_{\Omega}\|_{\Omega}^{2}+\nicefrac{\lambda}{2} \|u\|_{\Omega}^2\right\} \quad \text{subject to} \quad \eqref{eq:weak_st_eq}.
\end{align}
We estimate the degree to which the numerical solutions to the unregularized and regularized problems differ. 
This is of interest since the regularized problem is often chosen as a substitute for the unregularized one due to its easier mathematical structure; see, e.g., \cite{MR3095657,MR3385653}.
The result in this section allows for a comparison of the numerical cost and quality of convergence of the regularized problem as the regularization parameter $\lambda >0$ progressively decreases.

Three main ingredients are needed to achieve this estimation. 
The first ingredient is the stability under perturbations of optimal controls of the unregularized problem, which is guaranteed by condition \eqref{eq:assumption_control_growth}; see \cite{MR4525177}. 
That is, given $\bar{u}_\lambda$ a locally optimal solution to \eqref{ocp:Tik} that is sufficiently close to a locally optimal solution $\bar{u}$, that satisfies condition \eqref{eq:assumption_control_growth}, it holds for a positive constant $C$ independent of $\lambda$, that
\begin{equation*}
    \| \bar u-\bar u_\lambda\|_{L^1(\Omega)}\leq C\| \bar u_\lambda \|_{L^\infty(\Omega)} \lambda.
\end{equation*}
The second ingredient is the global reliability estimate $\|\bar{u} - \bar{\mathfrak{u}}_{\ell}\|_{L^1(\Omega)}  \lesssim (1 + \iota_{\ell})^{\gamma}\eta_{ocp}^{\gamma}$ (cf. \eqref{eq:global_reliab}) proved in Theorem \ref{thm:global_rel_ocp}.
The third and final ingredient is a globally reliable a posteriori error estimator for solutions to the regularized problem \eqref{ocp:Tik}. This can be found, e.g., in \cite[Theorems 5.2 and 7.1]{zbMATH07477246}.
These estimates together, allow us to conclude that 
\begin{align*}
    \|\bar u_{\ell} -\bar u_{\lambda,\ell}\|_{L^1(\Omega)}&\leq \|\bar u_{\ell} -\bar u\|_{L^1(\Omega)}+\|\bar u-\bar u_{\lambda}\|_{L^1(\Omega)}+\|\bar u_{\lambda} -\bar u_{\lambda,\ell}\|_{L^1(\Omega)}\\
    &\lesssim (1 + \iota_{\ell})^{\gamma}\eta_{ocp}^{\gamma} + \|\bar u_\lambda\|_{L^\infty(\Omega)} \lambda +\|\bar u_{\lambda} -\bar u_{\lambda,\ell}\|_{\Omega} \\
    &\lesssim (1 + \iota_{\ell})^{\gamma}\eta_{ocp}^{\gamma} + \lambda + E_{ocp},
\end{align*}
where $\bar{u}_{\lambda,\ell}$ denotes a suitable finite element approximation of $\bar{u}_\lambda$ and $E_{ocp}$ is an error estimator that can be chosen as in paper \cite{zbMATH07477246}.


\subsection{Efficiency}

In what follows, we analyze efficiency properties for the a posteriori error estimators ${\eta}_{st}$ and ${\eta}_{adj}$, introduced in section \ref{sec:reliability}. 

Given $T \in \mathcal{T}_{\ell}$ and $e \in \mathcal{E}_T$, we denote by $\varphi_{T}$ and $\varphi_{e}$ the classical \emph{interior} and \emph{edge} bubble functions, respectively; see, e.g., \cite[Section 2.3.1]{MR1885308}. 
We also introduce the following notation: for an edge, triangle or tetrahedron $\omega$, let $\mathcal{V}(\omega)$ be the set of vertices of $\omega$. 
We recall that $\mathcal{N}_{e}$ denotes the patch composed by the two elements $T^+$ and $T^-$ sharing $e$. 
Hence, we introduce the following edge/face bubble function 
\begin{equation}\label{def:aux_bubble}
\psi_{e}|_{\mathcal{N}_{e}}=d^{4d}
\left(\prod_{\texttt{v}\in\mathcal{V}(e)} \phi_{\texttt{v}}^{T^+} \phi_{\texttt{v}}^{T^-}\right)^{2},
\end{equation}
where, for $\texttt{v} \in \mathcal{V}(e)$, $\phi_{\texttt{v}}^{T^{\pm}}$ denotes the barycentric coordinates of $T^\pm$, which are understood as functions over $\mathcal{N}_{e}$. 
Important properties of this bubble function are: $\psi_{e} \in \mathbb{P}_{4d}(\mathcal{N}_{e})$, $\psi_{e} \in C^2(\mathcal{N}_{e})$, and $\psi_{e} = 0$ on $\partial \mathcal{N}_{e}$. 
In addition, $\nabla \psi_{e} = 0 \textrm{ on } \partial \mathcal{N}_{e}$ and $\llbracket \nabla \psi_e\cdot\mathbf{n}\rrbracket= 0$ on $e$.

Given $T\in\mathcal{T}_{\ell}$, we let $\Pi_{T} : L^2(T) \to \mathbb{P}_{0}(T)$ be the orthogonal projection operator into constant functions over $T$, i.e., $\Pi_{T}v:=\tfrac{1}{|T|}\int_{T}v(x)\dx$ for all $v\in L^{2}(T)$.

\begin{lemma}[local efficiency of ${\eta}_{st}$]\label{lemma:efficiency_est_2}
Let $\bar{u}\in U_{ad}$ be a solution to problem \eqref{def:weak_ocp}--\eqref{eq:weak_st_eq} with $\bar{y}$ being its associated optimal state. 
Let $\bar{\mathfrak{u}}_{\ell}\in U_{ad}$ be a solution to the semidiscrete problem with $\bar{y}_{\ell}$ being the corresponding discrete state variable. 
Assume that $f(\cdot,y)$ is globally Lipschitz with respect to $y\in \mathbb{R}$.
Then, for $T\in\mathcal{T}_{\ell}$, the local error indicator $\eta_{st,T}$, defined as in \eqref{def:state_indicator}, satisfies
\begin{align*}
\eta_{st,T}^2
\lesssim 
(1 + h_T^4)\|\bar{y} - \bar{y}_{\ell}\|_{\mathcal{N}_T}^2
+
h_{T}^{4 - d}\|\bar{u}-\bar{\mathfrak{u}}_{\ell}\|_{L^1(\mathcal{N}_T)}^2
+
\sum_{T'\in\mathcal{N}_{T}}h_{T}^{4}\|(1 - \Pi_{T})(\bar{\mathfrak{u}}_{\ell} - f(\cdot,\bar{y}_{\ell}))\|_{T'}^2,
\end{align*}
where $\mathcal{N}_T$ is defined as in \eqref{def:patch} and the hidden constant is independent of continuous and discrete optimal variables, the size of the elements in the mesh $\mathcal{T}_{\ell}$, and $\#\mathcal{T}_{\ell}$.
\end{lemma}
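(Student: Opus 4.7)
The plan is to bound the two contributions of $\eta_{st,T}^2$ separately via the bubble-function technique. Set $R := \bar{\mathfrak{u}}_\ell - f(\cdot,\bar{y}_\ell)$ and $J_e := \llbracket\nabla\bar{y}_\ell\cdot\mathbf{n}\rrbracket$; note that $J_e$ is constant on each $e$ since $\bar{y}_\ell|_{T^\pm}\in\mathbb{P}_1$. The weights $h_T^4$ and $h_T^3$ place us in the $L^2$-type regime, so a single integration by parts will not suffice: we integrate by parts twice, which forces the use of bubble functions whose normal derivatives also vanish on the appropriate boundary.

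For the interior residual on $T$, set $\bar{R} := \Pi_T R$ and test with $\psi := \bar{R}\varphi_T^2 \in H_0^1(T)$ (extended by zero outside $T$), where $\varphi_T$ is the standard interior bubble, so that both $\psi$ and $\nabla\psi$ vanish on $\partial T$. Polynomial norm equivalence gives $\|\bar{R}\|_T^2 \lesssim (\bar{R},\psi)_T = (R,\psi)_T - ((1-\Pi_T)R,\psi)_T$. Using the continuous state equation together with $\Delta\bar{y}_\ell = 0$ on $T$, rewrite $R = -\Delta(\bar{y}-\bar{y}_\ell) + (f(\cdot,\bar{y})-f(\cdot,\bar{y}_\ell)) + (\bar{\mathfrak{u}}_\ell - \bar{u})$. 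A double integration by parts (both boundary contributions vanishing) reduces the Laplacian term to $-(\bar{y}-\bar{y}_\ell,\Delta\psi)_T$, which the inverse estimate $\|\Delta\psi\|_T\lesssim h_T^{-2}\|\bar{R}\|_T$ bounds by $h_T^{-2}\|\bar{y}-\bar{y}_\ell\|_T\|\bar{R}\|_T$; the Lipschitz term contributes $\lesssim\|\bar{y}-\bar{y}_\ell\|_T\|\bar{R}\|_T$; and the control term is controlled by $L^\infty$--$L^1$ duality, $|(\bar{\mathfrak{u}}_\ell-\bar{u},\psi)_T|\lesssim h_T^{-d/2}\|\bar{R}\|_T\|\bar{u}-\bar{\mathfrak{u}}_\ell\|_{L^1(T)}$, which is precisely the source of the exponent $h_T^{4-d}$. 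Dividing by $\|\bar{R}\|_T$, multiplying by $h_T^2$, squaring, and adding the oscillation $h_T^4\|(1-\Pi_T)R\|_T^2$ yields $h_T^4\|R\|_T^2\lesssim (1+h_T^4)\|\bar{y}-\bar{y}_\ell\|_T^2 + h_T^{4-d}\|\bar{u}-\bar{\mathfrak{u}}_\ell\|_{L^1(T)}^2 + h_T^4\|(1-\Pi_T)R\|_T^2$.

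For the jump contribution, fix $e\in\mathcal{E}_T$ and test with $\phi := J_e\psi_e$, where $\psi_e$ is the edge bubble from \eqref{def:aux_bubble}; then $\phi\in H_0^1(\mathcal{N}_e)$, with $\nabla\phi = 0$ on $\partial\mathcal{N}_e$ and $\llbracket\nabla\phi\cdot\mathbf{n}\rrbracket = 0$ on $e$. Bubble properties give $\|J_e\|_e^2\lesssim (J_e,\phi)_e$, and element-wise integration by parts on $\mathcal{N}_e$, combined with $\Delta\bar{y}_\ell = 0$ on each $T^\pm$ and $\phi = 0$ on $\partial\mathcal{N}_e$, reduces the right-hand side to $(\nabla\bar{y}_\ell,\nabla\phi)_{\mathcal{N}_e}$. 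Splitting $\nabla\bar{y}_\ell = \nabla\bar{y} + \nabla(\bar{y}_\ell-\bar{y})$ and invoking the continuous state equation on the first piece gives, after adding and subtracting $\bar{\mathfrak{u}}_\ell$ and $f(\cdot,\bar{y}_\ell)$, $(J_e,\phi)_e = (\nabla(\bar{y}_\ell-\bar{y}),\nabla\phi)_{\mathcal{N}_e} + (\bar{u}-\bar{\mathfrak{u}}_\ell,\phi)_{\mathcal{N}_e} + (R,\phi)_{\mathcal{N}_e} + (f(\cdot,\bar{y}_\ell)-f(\cdot,\bar{y}),\phi)_{\mathcal{N}_e}$. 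A second integration by parts is then performed on the first summand: the contribution on $\partial\mathcal{N}_e$ vanishes because $\nabla\phi = 0$ there, while the contribution on $e$ equals $(\bar{y}_\ell-\bar{y},\llbracket\nabla\phi\cdot\mathbf{n}\rrbracket)_e = 0$ by the defining property of $\psi_e$, leaving $-(\bar{y}_\ell-\bar{y},\Delta\phi)_{\mathcal{N}_e}$. Inverse estimates together with the scalings $\|\phi\|_{\mathcal{N}_e}\sim h_T^{1/2}\|J_e\|_e$ and $\|\phi\|_{L^\infty(\mathcal{N}_e)}\sim h_T^{-(d-1)/2}\|J_e\|_e$, after multiplying by $h_T^{3/2}$ and squaring, yield $h_T^3\|J_e\|_e^2 \lesssim (1+h_T^4)\|\bar{y}-\bar{y}_\ell\|_{\mathcal{N}_e}^2 + h_T^{4-d}\|\bar{u}-\bar{\mathfrak{u}}_\ell\|_{L^1(\mathcal{N}_e)}^2 + h_T^4\|R\|_{\mathcal{N}_e}^2$. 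The residual term $h_T^4\|R\|_{\mathcal{N}_e}^2$ is absorbed by applying the interior bound on every $T'\in\mathcal{N}_e$, and summation over $e\in\mathcal{E}_T$ closes the estimate.

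The principal obstacle is the second integration by parts in the jump bound: its success depends crucially on the \emph{simultaneous} vanishing of $\nabla\psi_e$ on $\partial\mathcal{N}_e$ and of $\llbracket\nabla\psi_e\cdot\mathbf{n}\rrbracket$ on $e$, which is exactly why \eqref{def:aux_bubble} takes a squared product form. Without either property an uncontrollable $H^1$-seminorm of the state error would appear, and the $L^2$-type bound could not be closed. The remaining work---tracking $h_T$ powers through bubble scalings and inverse/trace inequalities to produce the exponents $4$ and $4-d$---is routine bookkeeping.
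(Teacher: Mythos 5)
Your proof is correct and follows essentially the same route as the paper's: the interior residual is bounded by testing with $\Pi_T(\bar{\mathfrak{u}}_\ell-f(\cdot,\bar y_\ell))\,\varphi_T^2$ and integrating by parts twice (so the $H^2$-type weight $h_T^4$ and the $L^\infty$--$L^1$ duality term $h_T^{4-d}\|\bar u-\bar{\mathfrak{u}}_\ell\|_{L^1}^2$ appear exactly as in the paper), and the jump term is bounded with the $C^2$ face bubble $\psi_e$ of \eqref{def:aux_bubble}, whose vanishing gradient on $\partial\mathcal{N}_e$ and vanishing normal jump on $e$ are used in the same way, with the resulting $h_T^2\|\bar{\mathfrak{u}}_\ell-f(\cdot,\bar y_\ell)\|_{\mathcal{N}_e}$ term absorbed via the interior bound. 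The only (cosmetic) difference is that the paper first assembles a single error identity \eqref{eq:error_eq_st} and then substitutes the two test functions, whereas you argue elementwise/patchwise directly.
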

\begin{proof} Let $v \in H_0^{1}(\Omega)$ be such that $v|_T\in C^2(T)$ for all $T\in \mathcal{T}_{\ell}$. 
Choosing $v$ as a test function in \eqref{eq:weak_st_eq} and applying elementwise integration by parts we obtain
\begin{equation*}
( \nabla (\bar{y} - \bar{y}_{\ell}), \nabla v)_{\Omega} + (f(\cdot,\bar{y}) - f(\cdot,\bar{y}_{\ell}),v)_{\Omega} -  (\bar{u} - \bar{\mathfrak{u}}_{\ell},v)_{\Omega} 
=
\sum_{T\in \mathcal{T}_{\ell}} ( \bar{\mathfrak{u}}_{\ell} - f(\cdot,\bar{y}_{\ell}), v )_{T}  - \sum_{e\in\mathcal{E}_{\ell}}(\llbracket \nabla \bar{y}_{\ell}\cdot \mathbf{n} \rrbracket,v)_{e}.
\end{equation*}
Elementwise integration by parts also yields
\begin{equation*}
 (\nabla (\bar{y} - \bar{y}_{\ell}), \nabla v)_{\Omega} = \sum_{e\in\mathcal{E}_{\ell}} (\llbracket \nabla v\cdot \mathbf{n} \rrbracket, \bar{y}-\bar{y}_{\ell})_{e} - \sum_{T\in \mathcal{T}_{\ell}} (\bar{y}-\bar{y}_\ell, \Delta v)_{T}.
\end{equation*}
Combining both identities we obtain, for any $v \in H_0^{1}(\Omega)$ such that $v|_T\in C^2(T)$ for all $T\in \mathcal{T}_{\ell}$, the identity
\begin{align}\label{eq:error_eq_st}
&\sum_{e\in\mathcal{E}_{\ell}} (\llbracket \nabla v\cdot \mathbf{n} \rrbracket, \bar{y}-\bar{y}_\ell)_{e}  - \sum_{T\in \mathcal{T}_{\ell}} (\bar{y}-\bar{y}_\ell, \Delta v)_{T}  + (f(\cdot,\bar{y}) - f(\cdot,\bar{y}_{\ell}),v)_{\Omega} -  (\bar{u} - \bar{\mathfrak{u}}_{\ell},v)_{\Omega} \\
& =
\sum_{T\in \mathcal{T}_{\ell}}\left[ (\Pi_{T}(\bar{\mathfrak{u}}_{\ell} - f(\cdot,\bar{y}_{\ell})), v )_{T} + ((1 - \Pi_{T})(\bar{\mathfrak{u}}_{\ell} - f(\cdot,\bar{y}_{\ell})), v )_{T} \right]  - \sum_{e\in\mathcal{E}_{\ell}} ( \llbracket \nabla \bar{y}_\ell\cdot \mathbf{n} \rrbracket, v)_{e}. \nonumber
\end{align}

We now proceed in two steps.

\underline{Step 1.} (estimation of $h_{T}^{4}\|\bar{\mathfrak{u}}_{\ell} - f(\cdot,\bar{y}_{\ell})\|_{T}^{2}$) Let $T\in \mathcal{T}_{\ell}$. 
An application of the triangle inequality gives
\begin{align}\label{eq:triangle_res_st}
h_{T}^{4}\|\bar{\mathfrak{u}}_{\ell} - f(\cdot,\bar{y}_{\ell})\|_{T}^{2}
\lesssim
h_{T}^{4}\|\Pi_{T}(\bar{\mathfrak{u}}_{\ell} - f(\cdot,\bar{y}_{\ell}))\|_{T}^{2} + h_{T}^{4}\|(1 - \Pi_{T})(\bar{\mathfrak{u}}_{\ell} - f(\cdot,\bar{y}_{\ell}))\|_{T}^2.
\end{align}
We concentrate on the first term on the right-hand side of \eqref{eq:triangle_res_st}.
Choose $v = \varphi_{T}^{2}\Pi_{T}(\bar{\mathfrak{u}}_{\ell} - f(\cdot,\bar{y}_{\ell}))$ in \eqref{eq:error_eq_st}. 
Utilizing that $\nabla (\varphi^{2}\Pi_{T}(\bar{\mathfrak{u}}_{\ell} - f(\cdot,\bar{y}_{\ell})))=0$ on $\partial T$, the inverse estimate $\|\varphi_{T}^{2}\Pi_{T}(\bar{\mathfrak{u}}_{\ell} - f(\cdot,\bar{y}_{\ell}))\|_{L^{\infty}(T)} \lesssim h_{T}^{-\frac{d}{2}}\|\varphi_{T}^{2}\Pi_{T}(\bar{\mathfrak{u}}_{\ell} - f(\cdot,\bar{y}_{\ell}))\|_T$, and standard properties of the interior bubble function $\varphi_{T}$ we arrive at
\begin{align*}
\|\Pi_T(\bar{\mathfrak{u}}_{\ell} - f(\cdot,\bar{y}_{\ell}))\|_{T}^{2}
\lesssim ~ & 
\|\bar{y}-\bar{y}_\ell\|_{T}\|\Delta (\varphi_{T}^{2}\Pi_{T}(\bar{\mathfrak{u}}_{\ell} - f(\cdot,\bar{y}_{\ell})))\|_T  + \big(\|f(\cdot,\bar{y}) - f(\cdot,\bar{y}_{\ell})\|_{T} \\
& + h_{T}^{-\frac{d}{2}}\|\bar{u} - \bar{\mathfrak{u}}_{\ell}\|_{L^{1}(T)} + \|(1 - \Pi_{T})(\bar{\mathfrak{u}}_{\ell} - f(\cdot,\bar{y}_{\ell}))\|_{T}\big)\|\Pi_T(\bar{\mathfrak{u}}_{\ell} - f(\cdot,\bar{y}_{\ell}))\|_T.
\end{align*}
The fact that $\Delta (\varphi_{T}^{2} \Pi_{T}(\bar{\mathfrak{u}}_{\ell} - f(\cdot,\bar{y}_{\ell}))) = \Pi_{T}(\bar{\mathfrak{u}}_{\ell} - f(\cdot,\bar{y}_{\ell}))\Delta \varphi_{T}^{2}$ combined with properties of $\varphi_{T}$ implies that $\|\Delta (\varphi_{T}^{2}\Pi_{T}(\bar{\mathfrak{u}}_{\ell} - f(\cdot,\bar{y}_{\ell})))\|_{T} \lesssim h_{T}^{-2}\|\Pi_{T}(\bar{\mathfrak{u}}_{\ell} - f(\cdot,\bar{y}_{\ell}))\|_{T}$. 
Moreover, since $f(\cdot,y)$ is globally Lipschitz, we have $\|f(\cdot,\bar{y}) - f(\cdot,\bar{y}_{\ell})\|_T \lesssim \|\bar{y} - \bar{y}_{\ell}\|_T$.
Consequently, it follows that
\begin{equation}\label{eq:res_T_st}
h_{T}^{4}\|\Pi_{T}(\bar{\mathfrak{u}}_{\ell} - f(\cdot,\bar{y}_{\ell}))\|_{T}^{2}
\lesssim (1 + h_T^4)\|\bar{y}-\bar{y}_\ell\|_{T}^2 + h_{T}^{4-d}\|\bar{u} - \bar{\mathfrak{u}}_{\ell}\|_{L^{1}(T)}^2 
 + h_T^4\|(1- \Pi_{T})(\bar{\mathfrak{u}}_{\ell} - f(\cdot,\bar{y}_{\ell}))\|_{T}^2.
\end{equation}
The use of estimate \eqref{eq:res_T_st} in \eqref{eq:triangle_res_st} yields the desired bound.

\underline{Step 2.} (estimation of $h_{T}^{3}\|\llbracket \nabla \bar{y}_\ell\cdot \mathbf{n} \rrbracket\|_{e}^{2}$) 
Let $T\in \mathcal{T}_{\ell}$ and $e\in \mathcal{E}_{T}$.  
We note that $\llbracket \nabla \bar{y}_\ell\cdot \mathbf{n} \rrbracket$ is only defined on $e$. 
Since this jump term is constant, we can easily extend it to the patch $\mathcal{N}_{e}$ by using its value.
From now on, we shall make no distinction between the jump term and its extension.

We consider the bubble function $\psi_{e}$, defined in \eqref{def:aux_bubble}, and choose $v = \llbracket\nabla \bar{y}_\ell\cdot\mathbf{n}\rrbracket\psi_{e}$ in \eqref{eq:error_eq_st}. 
Then, using that $\llbracket\nabla \bar{y}_\ell\cdot\mathbf{n}\rrbracket\in \mathbb{R}$, properties of $\psi_{e}$, and the basic estimate $\|\llbracket\nabla \bar{y}_\ell\cdot\mathbf{n}\rrbracket\psi_{e}\|_{T} \lesssim h_{T}^{\frac{1}{2}}\| \llbracket\nabla \bar{y}_\ell\cdot\mathbf{n}\rrbracket \|_{e}$ we arrive at
\begin{align*}
 \|\llbracket \nabla \bar{y}_\ell\cdot \mathbf{n}\rrbracket \psi_e^{\frac{1}{2}}\|_{e}^{2}
\lesssim &
\sum_{T' \in \mathcal{N}_{e}} \big( h_{T'}^{-2}\|\bar{y}-\bar{y}_{\ell}\|_{T'} + \|\bar{\mathfrak{u}}_{\ell}  - f(\cdot,\bar{y}_{\ell})\|_{T'} \\
& \,  +  \|f(\cdot,\bar{y}) - f(\cdot,\bar{y}_{\ell})\|_{T} + h_{T'}^{-\frac{d}{2}}\|\bar{u} - \bar{\mathfrak{u}}_{\ell}\|_{L^1(T')} \big) h_{T}^{\frac{1}{2}}\| \llbracket\nabla \bar{y}_\ell\cdot\mathbf{n}\rrbracket \|_{e}.
\end{align*}
We then apply standard bubble functions arguments, the shape regularity property of the family $\{ \mathcal{T}_\ell \}$, and $\|f(\cdot,\bar{y}) - f(\cdot,\bar{y}_{\ell})\|_T \lesssim \|\bar{y} - \bar{y}_{\ell}\|_T$ to obtain
\begin{align*}
h_T^{\frac{3}{2}}\| \llbracket \nabla \bar{y}_\ell \cdot\mathbf{n}\rrbracket\|_{e}
\lesssim
\sum_{T'\in\mathcal{N}_e}  
\big(
(1+h_T^2)\|\bar{y}-\bar{y}_{\ell}\|_{T'} + h_T^2\|\bar{\mathfrak{u}}_{\ell} - f(\cdot,\bar{y}_{\ell})\|_{T'} + h_{T}^{2-\frac{d}{2}}\|\bar{u} - \bar{\mathfrak{u}}_{\ell}\|_{L^{1}(T')}
\big).
\end{align*}
A combination of this bound with the estimate proved for $h_T^4\|\bar{\mathfrak{u}}_{\ell} - f(\cdot,\bar{y}_{\ell})\|_{T}^2$, in Step 1, ends the proof.
\end{proof}

We now continue with the study of local efficiency properties of the estimator ${\eta}_{adj}$, defined in \eqref{def:adjoint_indicator}.

\begin{lemma}[local efficiency of $\eta_{adj}$]\label{lemma:efficiency_adj}
Let $\bar{u}\in U_{ad}$ be a solution to problem \eqref{def:weak_ocp}--\eqref{eq:weak_st_eq} with $\bar{y}$ and $\bar{p}$ being the corresponding optimal state and adjoint state, respectively. 
Let $\bar{\mathfrak{u}}_{\ell}\in \mathbb{U}_{ad}$ be a solution to the semidiscrete problem with $\bar{y}_{\ell}$ and $\bar{p}_{\ell}$ being the corresponding discrete state and adjoint state variables, respectively.  
If $\frac{\partial f}{\partial y}(\cdot, y)\in L^{2}(\Omega)$ for all $y\in H_0^1(\Omega)$ and $\frac{\partial f}{\partial y}(\cdot, y)$ is globally Lipschitz with respect to $y\in \mathbb{R}$,
then the local error indicator $\eta_{adj,T}$ \textnormal{(}$T\in\mathcal{T}_{\ell}$\textnormal{)}, defined as in \eqref{def:adjoint_indicator}, satisfies 
\begin{equation*}
\eta_{adj,T}^2
\lesssim
    (1 + h_{T}^{4 - d})\|\bar{p}-\bar{p}_{\ell}\|_{L^\infty(\mathcal{N}_T)}^2 + h_{T}^{4 - d}\|\bar{y}-\bar{y}_{\ell}\|_{\mathcal{N}_T}^2 + \sum_{T'\in\mathcal{N}_{T}}h_{T}^{4-d}\|(1 - \Pi_{T})(y_{\Omega} + \tfrac{\partial f}{\partial y}(\cdot, \bar{y}_{\ell})\bar{p}_{\ell})\|_{T'}^{2},
\end{equation*}
where $\mathcal{N}_T$ is defined as in \eqref{def:patch} and the hidden constant is independent of continuous and discrete optimal variables, the size of the elements in the mesh $\mathcal{T}_{\ell}$, and $\#\mathcal{T}_{\ell}$.
\end{lemma}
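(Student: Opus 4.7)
The plan is to adapt the residual--bubble function strategy of Lemma \ref{lemma:efficiency_est_2} to the adjoint equation, noting one crucial structural difference: because the reliability estimate of Lemma \ref{lemma:rel_adj} provides only $L^{\infty}$ control of $\bar p - \bar p_{\ell}$, every integration by parts must be organized so that all derivatives of the error $\bar p - \bar p_{\ell}$ are transferred onto the test function. Subtracting the continuous adjoint equation from the element identity and using that $\Delta \bar p_{\ell}\equiv 0$ on every $T\in\mathcal T_{\ell}$, I obtain the following error identity valid for every $v\in H_0^1(\Omega)$ such that $v$ and $\nabla v\cdot\mathbf{n}$ vanish on the boundary of each mesh element that $\mathrm{supp}(v)$ meets:
\begin{equation*}
\int_{\omega}R_\Omega\, v
= \int_{\omega}(\bar{y}_\ell - \bar{y})\,v
- \int_{\omega} (\bar{p} - \bar{p}_\ell)\,\Delta v
+ \int_{\omega}\!\bigl[\tfrac{\partial f}{\partial y}(\cdot,\bar{y})\bar{p} - \tfrac{\partial f}{\partial y}(\cdot,\bar{y}_\ell)\bar{p}_\ell\bigr]v
+ \sum_{e}\!\int_e \llbracket \nabla \bar{p}_\ell \cdot \mathbf{n}\rrbracket\, v,
\end{equation*}
where $R_\Omega := \bar{y}_\ell - y_\Omega - \tfrac{\partial f}{\partial y}(\cdot,\bar{y}_\ell)\bar{p}_\ell$ and $\omega = \mathrm{supp}(v)$.

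To estimate the element residual, I decompose $R_\Omega|_T = \tilde{R}_T - (1 - \Pi_T)\bigl(y_\Omega + \tfrac{\partial f}{\partial y}(\cdot,\bar{y}_\ell)\bar{p}_\ell\bigr)$, where $\tilde{R}_T := \bar{y}_\ell - \Pi_T\bigl(y_\Omega + \tfrac{\partial f}{\partial y}(\cdot,\bar{y}_\ell)\bar{p}_\ell\bigr)\in \mathbb{P}_1(T)$; the second piece is exactly the data oscillation appearing in the claimed bound. Testing the identity above on $\omega = T$ with $v = \varphi_T^{\,2}\tilde{R}_T$ kills every jump contribution (since $\varphi_T^{\,2}$ and its normal derivative both vanish on $\partial T$); bubble equivalence gives $\|\tilde R_T\|_T^2 \lesssim \int_T \tilde R_T^{\,2}\,\varphi_T^{\,2}$, while inverse estimates yield $\|\Delta v\|_{L^1(T)}\lesssim h_T^{d/2-2}\|\tilde R_T\|_T$. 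Combining these with the Lipschitz hypothesis on $\tfrac{\partial f}{\partial y}$, the uniform $L^{\infty}$ bound on $\bar p$, and the embedding $\|\cdot\|_T\lesssim h_T^{d/2}\|\cdot\|_{L^\infty(T)}$ to trade the $L^2$ adjoint error for its $L^{\infty}$ counterpart, I obtain $h_T^{4-d}\|\tilde R_T\|_T^2\lesssim (1+h_T^{4-d})\|\bar p-\bar p_\ell\|_{L^{\infty}(T)}^2 + h_T^{4-d}\|\bar y-\bar y_\ell\|_T^2$, and the triangle inequality closes the bound for $h_T^{4-d}\|R_\Omega\|_T^2$.

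For the jump contribution I rely on the patch bubble $\psi_e$ in \eqref{def:aux_bubble}, which was engineered precisely so that $\psi_e$ and $\nabla \psi_e$ vanish on $\partial\mathcal N_e$ and $\llbracket \nabla \psi_e \cdot \mathbf n\rrbracket = 0$ on $e$. Setting $j_e:=\llbracket \nabla \bar p_\ell\cdot\mathbf n\rrbracket$ (constant along $e$, extended constantly to $\mathcal N_e$) and testing the identity on $\omega=\mathcal N_e$ with $v = j_e\,\psi_e$, the special properties of $\psi_e$ eliminate every boundary term in the elementwise integration by parts and only the edge $e$ contributes to the jump sum, giving
\begin{equation*}
j_e^{\,2}\!\int_e\psi_e \;\lesssim\; |j_e|\Bigl[\bigl(\|\bar y-\bar y_\ell\|_{\mathcal N_e}+\|R_\Omega\|_{\mathcal N_e}+\|\bar p - \bar p_\ell\|_{\mathcal N_e}\bigr)\|\psi_e\|_{\mathcal N_e} +\|\bar p - \bar p_\ell\|_{L^{\infty}(\mathcal N_e)}\|\Delta\psi_e\|_{L^1(\mathcal N_e)}\Bigr].
\end{equation*}
Using the scalings $\int_e\psi_e \sim h_T^{d-1}$, $\|\psi_e\|_{\mathcal N_e}\lesssim h_T^{d/2}$, $\|\Delta\psi_e\|_{L^1(\mathcal N_e)}\lesssim h_T^{d-2}$, dividing by $|j_e|$, squaring and multiplying by $h_T^2$, then feeding the bound on $\|R_\Omega\|_{T'}^2$ already proved for each $T'\in\mathcal N_e$, gives $h_T^{\,2} j_e^{\,2} \lesssim (1+h_T^{4-d})\|\bar p - \bar p_\ell\|_{L^{\infty}(\mathcal N_e)}^2 + h_T^{4-d}\|\bar y - \bar y_\ell\|_{\mathcal N_e}^2 + h_T^{4-d}\sum_{T'\in\mathcal N_e}\|(1-\Pi_{T'})(y_\Omega+\tfrac{\partial f}{\partial y}(\cdot,\bar y_\ell)\bar p_\ell)\|_{T'}^{\,2}$. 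Summing over $e\in\mathcal E_T$ yields the claim.

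The main obstacle will be the careful bookkeeping of the $L^{\infty}$--versus--$L^{2}$ exchange that produces the unusual weight $h_T^{4-d}$: the $h_T^{-2}$ generated by $\Delta v$ after twice integrating by parts must be compensated by the volume factor $h_T^{\,d}$ coming from $\|v\|_{L^1}$, and it is exactly this balance that makes the engineered patch bubble $\psi_e$—rather than a standard $L^{2}$-type edge bubble—indispensable.
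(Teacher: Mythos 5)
Your proposal is correct and follows essentially the same route as the paper: the same error identity with all derivatives moved onto the test function (so that only $L^\infty$ control of $\bar p-\bar p_\ell$ is needed), the same splitting of the element residual via $\Pi_T$ and testing with $\varphi_T^2\tilde R_T$, and the same use of the $C^2$ patch bubble $\psi_e$ from \eqref{def:aux_bubble} with the scalings $\|\Delta(\varphi_T^2\tilde R_T)\|_{L^1(T)}\lesssim h_T^{d/2-2}\|\tilde R_T\|_T$ and $\|\Delta\psi_e\|_{L^1(\mathcal N_e)}\lesssim h_T^{d-2}$. One minor bookkeeping point: in your displayed jump estimate the factor $\|\bar p-\bar p_\ell\|_{\mathcal N_e}\|\psi_e\|_{\mathcal N_e}$ for the term $(\tfrac{\partial f}{\partial y}(\cdot,\bar y_\ell)(\bar p-\bar p_\ell),j_e\psi_e)$ would require $\tfrac{\partial f}{\partial y}(\cdot,\bar y_\ell)\in L^\infty$; under the stated $L^2$ hypothesis one should instead use the generalized H\"older inequality to get $\|\tfrac{\partial f}{\partial y}(\cdot,\bar y_\ell)\|_{L^2(\mathcal N_e)}\|\bar p-\bar p_\ell\|_{L^\infty(\mathcal N_e)}\|\psi_e\|_{\mathcal N_e}$, exactly as in the paper, which yields the same final bound.
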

\begin{proof}
Let $v \in H_0^{1}(\Omega)$ such that $v|_T\in C^2(T)$ ($T\in \mathcal{T}_{\ell}$).
Similar arguments to those that lead to \eqref{eq:error_eq_st} yield
\begin{align}\label{eq:error_eq_adj}
\sum_{e\in\mathcal{E}_{\ell}} (\llbracket \nabla v\cdot \mathbf{n} \rrbracket, \bar{p}-\bar{p}_{\ell})_{e} - \sum_{T\in \mathcal{T}_{\ell}} (\bar{p}-\bar{p}_{\ell}, & \, \Delta v)_{T} +\left(\left[\tfrac{\partial f}{\partial y}(\cdot, \bar{y}) - \tfrac{\partial f}{\partial y}(\cdot, \bar{y}_{\ell})\right]\bar{p},v\right)_{\Omega} \\
 + \left( \tfrac{\partial f}{\partial y}(\cdot, \bar{y}_{\ell})(\bar{p} - \bar{p}_{\ell}), v\right)_{\Omega} -  (\bar{y} - \bar{y}_{\ell},v)_{\Omega} =  &
\sum_{T\in \mathcal{T}_{\ell}} \left( \bar{y}_{\ell} - \Pi_{T}\left(y_{\Omega} + \tfrac{\partial f}{\partial y}(\cdot, \bar{y}_{\ell})\bar{p}_{\ell}\right), v \right)_{T}  - \sum_{e\in\mathcal{E}_{\ell}} ( \llbracket \nabla \bar{p}_{\ell}\cdot \mathbf{n} \rrbracket, v)_{e} \nonumber\\
& + \sum_{T\in \mathcal{T}_{\ell}}( (\Pi_{T} - 1)(y_{\Omega} + \tfrac{\partial f}{\partial y}(\cdot, \bar{y}_{\ell})\bar{p}_{\ell}),v)_{T}. \nonumber
\end{align}
We proceed on the basis of two steps.

\underline{Step 1.} (estimation of $h_T^{4-d}\| \bar{y}_{\ell} - y_{\Omega} - \tfrac{\partial f}{\partial y}(\cdot, \bar{y}_{\ell})\bar{p}_{\ell}\|_{T}^{2}$) Let $T\in\mathcal{T}_{\ell}$. 
To simplify the presentation of the material, we define $R_{T}:= \left.\left(\bar{y}_{\ell} - \Pi_{T}\left(y_{\Omega} + \tfrac{\partial f}{\partial y}(\cdot, \bar{y}_{\ell})\bar{p}_{\ell}\right)\right)\right|_{T}^{}$.
We recall that $\Pi_{T}$ denotes the orthogonal projection operator into constant functions over $T$. 
The triangle inequality implies 
\begin{equation}\label{eq:triangle_st_2}
h_{T}^{4-d}\|\bar{y}_{\ell} - y_{\Omega} - \tfrac{\partial f}{\partial y}(\cdot, \bar{y}_{\ell})\bar{p}_{\ell}\|_{T}^{2} 
\lesssim
h_{T}^{4-d}\|R_{T}\|_{T}^{2}  
+
h_{T}^{4-d}\| (1 - \Pi_T)(y_{\Omega} + \tfrac{\partial f}{\partial y}(\cdot, \bar{y}_{\ell})\bar{p}_{\ell})\|_{T}^{2}.
\end{equation}
Let us concentrate on $h_{T}^{4-d}\|R_{T}\|_{T}^{2}$.
Invoke the interior bubble function $\varphi_{T}$ and take $v=\varphi_{T}^{2}R_{T}$ in \eqref{eq:error_eq_adj}.
Then, the fact that $\tfrac{\partial f}{\partial y}(\cdot,\bar{y}_{\ell})\in L^2(\Omega)$ and $\bar{p}\in L^{\infty}(\Omega)$, and the use of the inequality $\|\Delta(\varphi_{T}^{2}R_{T})\|_{L^{1}(T)} 
\lesssim 
h_{T}^{\frac{d}{2}-2}\|R_{T}\|_{T}
$, which stems from \cite[Lemma 4.5.3]{MR2373954}, give
\begin{equation*}
\|R_{T}\|_{T} 
\lesssim
\|(1 - \Pi_{T})(y_{\Omega} + \tfrac{\partial f}{\partial y}(\cdot, \bar{y}_{\ell})\bar{p}_{\ell})\|_{T} + \|\bar{y} - \bar{y}_{\ell}\|_{T} + \|\tfrac{\partial f}{\partial y}(\cdot, \bar{y}) - \tfrac{\partial f}{\partial y}(\cdot, \bar{y}_{\ell})\|_{L^{2}(T)} + \big(1 + h_{T}^{\frac{d}{2}-2}\big)\|\bar{p} - \bar{p}_{\ell}\|_{L^{\infty}(T)}.
\end{equation*}
Now, using that $\frac{\partial f}{\partial y}(\cdot, y)$ is globally Lipschitz we obtain that 
\begin{align*}
h_{T}^{4-d}\|R_T\|_{T}^{2}
\lesssim 
h_{T}^{4-d}\|(1 - \Pi_{T})(y_{\Omega} + \tfrac{\partial f}{\partial y}(\cdot, \bar{y}_{\ell})\bar{p}_{\ell})\|_{T}^2 + h_{T}^{4-d}\|\bar{y} - \bar{y}_{\ell}\|_{T}^2 + (h_{T}^{4 - d} + 1)\|\bar{p} - \bar{p}_{\ell}\|_{L^{\infty}(T)}^{2}. \nonumber
\end{align*}
This estimate, together with \eqref{eq:triangle_st_2}, leads to the desired bound for the element residual term.

\underline{Step 2.} (estimation of $h_{T}^{2}\|\llbracket \nabla \bar{p}_{\ell}\cdot \mathbf{n} \rrbracket\|_{L^{\infty}(e)}^{2}$)  
Let $T\in \mathcal{T}_{\ell}$ and $e\in \mathcal{E}_{T}$. 
The fact that $\llbracket \nabla \bar{p}_{\ell}\cdot \mathbf{n} \rrbracket\in \mathbb{R}$ implies 
\begin{equation*}
\|\llbracket \nabla \bar{p}_{\ell}\cdot \mathbf{n} \rrbracket\|_{L^{\infty}(e)}^{2} 
=
|\llbracket \nabla \bar{p}_{\ell}\cdot \mathbf{n} \rrbracket|^{2}
=
|e|^{-1}
\|\llbracket \nabla \bar{p}_{\ell}\cdot \mathbf{n} \rrbracket\|_{e}^{2},
\end{equation*}
where $|e|$ denotes the measure of $e$. 
In view of the shape regularity of the mesh $\mathcal{T}_{\ell}$ we have that $|e|\approx h_T^{d-1}$ and consequently $h_T^{2}\|\llbracket \nabla \bar{p}_{\ell}\cdot \mathbf{n} \rrbracket\|_{L^{\infty}(e)}^{2}  \approx
h_T^{3-d} \|\llbracket \nabla \bar{p}_{\ell}\cdot \mathbf{n} \rrbracket\|_{e}^{2}$. 
In what follows, we estimate $h_T^{3-d} \|\llbracket \nabla \bar{p}_{\ell}\cdot \mathbf{n} \rrbracket\|_{e}^{2}$.

We proceed as in Lemma \ref{lemma:efficiency_est_2} and extend the jump term $\llbracket \nabla \bar{p}_{\ell}\cdot \mathbf{n} \rrbracket$ to the patch $\mathcal{N}_{e}$, making no distinction between the jump term and its extension.
Then, we choose $v = \llbracket \nabla \bar{p}_{\ell}\cdot \mathbf{n} \rrbracket\psi_{e}$ in \eqref{eq:error_eq_adj} and use that $\llbracket \nabla \bar{p}_{\ell}\cdot \mathbf{n} \rrbracket\in \mathbb{R}$, $\psi_e \in H^2_0(\mathcal{N}_e)$, and $\llbracket\nabla \psi_{e} \cdot \mathbf{n}\rrbracket = 0$. 
From these arguments we derive 
\begin{align*}
\|\llbracket \nabla \bar{p}_{\ell}\cdot \mathbf{n} \rrbracket\|_{e}^{2}
\lesssim &
\sum_{T'\in \mathcal{N}_{e}} \big(\|\bar{y}_{\ell} - y_{\Omega} - \tfrac{\partial f}{\partial y}(\cdot, \bar{y}_{\ell})\bar{p}_{\ell}\|_{T} + \|\bar{y} - \bar{y}_{\ell}\|_{T} \\
& \, + \|\tfrac{\partial f}{\partial y}(\cdot, \bar{y}) - \tfrac{\partial f}{\partial y}(\cdot, \bar{y}_{\ell})\|_{L^{2}(T)} + \big(1 + h_{T}^{\frac{d}{2}-2}\big)\|\bar{p} - \bar{p}_{\ell}\|_{L^{\infty}(T)}\big)\|\psi_e\llbracket \nabla \bar{p}_{\ell}\cdot \mathbf{n} \rrbracket\|_{T'}.
\end{align*}
Using the latter, the estimate $\|\psi_e\llbracket \nabla \bar{p}_{\ell}\cdot \mathbf{n} \rrbracket\|_{T'}\lesssim h_{T}^{\frac{1}{2}} \|\llbracket \nabla \bar{p}_{\ell}\cdot \mathbf{n} \rrbracket\|_{e}$, and the Lipschitz property of $\frac{\partial f}{\partial y}(\cdot, y)$, we arrive at
\begin{align*}
h_T^{3-d}\|\llbracket \nabla \bar{p}_{\ell}\cdot \mathbf{n} \rrbracket\|_{e}^{2}
\lesssim 
\sum_{T'\in \mathcal{N}_{e}} \big(h_{T}^{4-d}\|\bar{y}_{\ell} - y_{\Omega} - \tfrac{\partial f}{\partial y}(\cdot, \bar{y}_{\ell})\bar{p}_{\ell}\|_{T}^{2} + h_{T}^{4-d}\|\bar{y} - \bar{y}_{\ell}\|_{T}^{2} + \big(h_{T}^{4-d} + 1\big)\|\bar{p} - \bar{p}_{\ell}\|_{L^{\infty}(T)}^{2}\big).
\end{align*}
We conclude the proof by using the bound obtained for $h_{T}^{4-d}\|\bar{y}_{\ell} - y_{\Omega} - \tfrac{\partial f}{\partial y}(\cdot, \bar{y}_{\ell})\bar{p}_{\ell}\|_{T}^{2}$ in the previous step.
\end{proof}

The next result is an immediate consequence of Lemmas \ref{lemma:efficiency_est_2} and \ref{lemma:efficiency_adj}.

\begin{theorem}[local efficiency]\label{thm:efficiency_est_ocp}
In the framework of Lemma \ref{lemma:efficiency_adj} we have, for $T\in\mathcal{T}_{\ell}$, that
\begin{align*}
\eta_{st,T}^2 + \eta_{adj,T}^2
\lesssim  & ~
\|\bar{y} - \bar{y}_{\ell}\|_{\mathcal{N}_T}^2
+ \|\bar{p}-\bar{p}_{\ell}\|_{L^\infty(\mathcal{N}_T)}^2 + h_{T}^{4 - d}\|\bar{u}-\bar{\mathfrak{u}}_{\ell}\|_{L^1(\mathcal{N}_T)}^2 \\
& \, + 
\sum_{T'\in\mathcal{N}_{T}}\left(h_{T}^{4}\|(1 - \Pi_{T})(\bar{\mathfrak{u}}_{\ell} - f(\cdot,\bar{y}_{\ell}))\|_{T'}^2 + h_{T}^{4-d}\|(1 - \Pi_{T})(y_{\Omega} + \tfrac{\partial f}{\partial y}(\cdot, \bar{y}_{\ell})\bar{p}_{\ell})\|_{T'}^{2}\right),
\end{align*}
where $\mathcal{N}_T$ is defined as in \eqref{def:patch} and the hidden constant is independent of continuous and discrete optimal variables, the size of the elements in the mesh $\mathcal{T}_{\ell}$, and $\#\mathcal{T}_{\ell}$.
\end{theorem}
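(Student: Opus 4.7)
The plan is to deduce the combined local efficiency bound as a direct addition of the two component estimates proved in Lemmas \ref{lemma:efficiency_est_2} and \ref{lemma:efficiency_adj}. There is essentially no additional technical work beyond bookkeeping of the dependence on $h_T$.

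First, I would apply Lemma \ref{lemma:efficiency_est_2} verbatim, giving
\begin{equation*}
\eta_{st,T}^2
\lesssim
(1 + h_T^4)\|\bar{y} - \bar{y}_{\ell}\|_{\mathcal{N}_T}^2
+
h_{T}^{4 - d}\|\bar{u}-\bar{\mathfrak{u}}_{\ell}\|_{L^1(\mathcal{N}_T)}^2
+
\sum_{T'\in\mathcal{N}_{T}}h_{T}^{4}\|(1 - \Pi_{T})(\bar{\mathfrak{u}}_{\ell} - f(\cdot,\bar{y}_{\ell}))\|_{T'}^2,
\end{equation*}
and then Lemma \ref{lemma:efficiency_adj} in the same way, yielding
\begin{equation*}
\eta_{adj,T}^2
\lesssim
    (1 + h_{T}^{4 - d})\|\bar{p}-\bar{p}_{\ell}\|_{L^\infty(\mathcal{N}_T)}^2 + h_{T}^{4 - d}\|\bar{y}-\bar{y}_{\ell}\|_{\mathcal{N}_T}^2 + \sum_{T'\in\mathcal{N}_{T}}h_{T}^{4-d}\|(1 - \Pi_{T})(y_{\Omega} + \tfrac{\partial f}{\partial y}(\cdot, \bar{y}_{\ell})\bar{p}_{\ell})\|_{T'}^{2}.
\end{equation*}

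Second, I would add these two inequalities and simplify the powers of $h_T$ appearing in front of $\|\bar{y}-\bar{y}_{\ell}\|_{\mathcal{N}_T}^2$ and $\|\bar{p}-\bar{p}_{\ell}\|_{L^\infty(\mathcal{N}_T)}^2$. Since every mesh $\mathcal{T}_{\ell}\in\mathbb{T}$ is obtained by refinement of the fixed initial mesh $\mathcal{T}_0$ and $\Omega$ is bounded, we have $h_T\leq h_{\mathcal{T}_0} =: h_{\max}$ for every $T\in \mathcal{T}_{\ell}$. Consequently, the factors $(1+h_T^4)$ and $(1+h_T^{4-d})$ are uniformly bounded (in dimensions $d\in\{2,3\}$ the exponent $4-d$ is nonnegative) and can be absorbed into the hidden constant. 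This produces the claimed estimate.

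There is no real obstacle here; the entire statement is a corollary, and the only care required is to track that the hidden constant in the final bound depends on $h_{\max}$ but not on the local mesh size, the optimal variables, or $\#\mathcal{T}_\ell$, consistently with the independence already granted by Lemmas \ref{lemma:efficiency_est_2} and \ref{lemma:efficiency_adj}.
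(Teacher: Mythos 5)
Your proposal is correct and coincides with the paper's own treatment: the paper states that Theorem \ref{thm:efficiency_est_ocp} is an immediate consequence of Lemmas \ref{lemma:efficiency_est_2} and \ref{lemma:efficiency_adj}, which is precisely the summation-and-absorption argument you give. The only detail worth noting, which you already handle, is that absorbing $(1+h_T^4)$ and $(1+h_T^{4-d})$ into the hidden constant uses $h_T\le h_{\mathcal{T}_0}$ and $4-d\ge 1$ for $d\in\{2,3\}$.
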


\section{Numerical examples}\label{sec:num_ex}
In the present section we perform three numerical experiments in two-dimensional domains. 
The results support our theoretical results and show the performance of the error estimator $\eta_{ocp}$, defined in \eqref{def:total_estimator}.

The numerical examples were performed using a code that we developed in \texttt{MATLAB$^\copyright$ (R2024a)}. 
All system matrices as well as the term $(\bar{\mathfrak{u}}_{\ell},v_{\ell})_{\Omega}$ are computed exactly, whereas approximation errors, error indicators, and the remaining right-hand sides are computed using a quadrature formula that is exact for polynomials of degree $19$.
We also have incorporated an extra forcing term $\mathfrak{f}\in L^2(\Omega)$ in the state equation, with the aim of simplifying the construction of exact optimal solutions. 
With this modification, the right-hand side of the state equation now reads: $(\mathfrak{f} + \bar{u}, v)_{\Omega}$.
Additionally, in section \ref{sec:ex_2} below, we go beyond the presented theory and perform numerical experiments with a non-convex domain. 

For a given partition $\mathcal{T}_{\ell}$, we seek $\bar{y}_\ell \in \mathbb{V}_{\ell}$,  $\bar{p}_\ell \in \mathbb{V}_{\ell}$, and $\bar{\mathfrak{u}}_\ell \in U_{ad}$ that solve \eqref{eq:discrete_pde_semi}, \eqref{eq:discrete_adj_eq}, and \eqref{eq:pointwise_charac_discrete}.
We solve such a nonlinear system of equations using the fixed-point solution technique devised in \cite[Section 4]{MR2891922}. 
Once a discrete solution is obtained, we compute the error indicator 
\begin{align}\label{def:total_indicator}
\eta_{ocp,T} := \left(\eta_{st,T}^{2\gamma} + \eta_{adj,T}^{2\gamma}\right)^{\frac{1}{2}} \qquad (\gamma \in (0,1])
\end{align}
to drive the adaptive procedure described in Algorithm \ref{Algorithm}.
We define the total number of degrees of freedom $\rm{Ndofs} = 2\:\rm{dim}\mathbb{V}_{\ell}$ and the effectivity index 
\begin{equation*}
\mathcal{I}_{eff}:= \frac{\eta_{ocp}}{(\|\bar{u} - \bar{\mathfrak{u}}_{\ell}\|_{L^1(\Omega)}^{2} + \|\bar{y} - \bar{y}_{\ell}\|_{\Omega}^{2} + \|\bar{p} - \bar{p}_{\ell}\|_{L^{\infty}(\Omega)}^{2})^{\frac{1}{2}}}.
\end{equation*}

\begin{algorithm}[ht]
\caption{\textbf{Adaptive fixed-point algorithm.}}
\label{Algorithm}
\SetKwInput{set}{Set}
\SetKwInput{ase}{Fixed-point iteration}
\SetKwInput{al}{A posteriori error estimation}
\SetKwInput{Input}{Input}
\SetAlgoLined
\Input{Initial mesh $\mathcal{T}_0$, initial control $\mathfrak{u}_{0}$, constraints $a$ and $b$, desired state $y_\Omega$, and right-hand side $\mathfrak{f}$.}
\set{$\ell=0$.}
\ase{}
Compute $[\bar{y}_{\ell},\bar{p}_{\ell},\bar{\mathfrak{u}}_{\ell}]=\textbf{Fixed-point iteration}[{\mathcal{T}_{\ell}},\mathfrak{u}_{0},a,b,y_\Omega, \mathfrak{f}]$, which implements a similar fixed-point iteration to \cite[Section 4]{MR2891922}; in our case, we solve the state equation using the Newton method.
\\
\al 
\\
For each $T\in{\mathcal{T}_\ell}$ compute the local error indicator $\eta_{ocp,T}$ given in \eqref{def:total_indicator}.
\\
Mark an element $T$ for refinement if $\eta_{ocp,T}> \displaystyle\frac{1}{2}\max_{T'\in{\mathcal{T}_\ell}} \eta_{ocp,T'}$.
\\
From step $\mathbf{3}$, construct a new mesh, using a longest edge bisection algorithm. Set $\ell \leftarrow \ell + 1$, and go to step $\mathbf{1}$.
\end{algorithm}
\normalsize

\subsection{Exact solution on convex domain}\label{sec:ex_1}
We take the example from \cite[Section 3.3]{MR3095657} (see also \cite[Section 5.3]{MR3385653}) and set $\Omega:=(0,1)^{2}$, $a=-1$, $b=1$, $\gamma=1$, and take $\mathfrak{f}$ and $y_{\Omega}$ such that 
\begin{align*}
  &\bar{y}(x_{1},x_{2}) = 16x_{1}x_{2}(1-x_1)(1-x_2), \\ 
  & \bar{p}(x_{1},x_{2}) = -\sin(2\pi x_{1})\sin(2\pi x_{2}), \quad \bar{u}(x_{1},x_{2}) = -\mathrm{sign}(\bar{p}(x_{1},x_{2}))
\end{align*}
for $(x_{1},x_{2})\in \Omega$.
We consider two different choices of the nonlinear function, namely $f(\cdot,y)=y^3$ and $f(\cdot,y)=\arctan(y)$.

Figures \ref{fig:ex_1_1}, \ref{fig:ex_1_2}, and \ref{fig:ex_1_3} show the results obtained for this example.
In Fig. \ref{fig:ex_1_1} we display experimental convergence rates for each contribution of the total error when uniform and adaptive refinements are considered, choosing $f(\cdot,y)=\arctan(y)$ as the nonlinear term.
The same information is shown in Fig. \ref{fig:ex_1_2}, but choosing the nonlinear term $f(\cdot,y)=y^3$ instead.
We observe that all the approximation errors obtained for both cases exhibit optimal convergence rates.
In Fig. \ref{fig:ex_1_3}, we present an approximate optimal control $\bar{\mathfrak{u}}_{\ell}$ and its associated adaptively refined meshes obtained after 5 and 10 iterations. 
We observe that, even when the adaptive refinement is not necessarily concentrated near the discrete switching set, this set seems to converge to the continuous one when the total number of degrees of freedom increases. 
We observe the classical bang-bang structure of the control.

\begin{figure}
  \begin{tikzpicture}
  \begin{groupplot}[group style={group size= 3 by 1},width=0.4\textwidth,cycle list/Dark2-6,
                      cycle multiindex* list={
                          mark list*\nextlist
                          Dark2-6\nextlist},
                      every axis plot/.append style={ultra thick},
                      grid=major,
                      xlabel={Ndofs},]
         \nextgroupplot[title={Error contributions},ymode=log,xmode=log,
           legend entries={\hspace{-0.5cm}\tiny{$\|\bar{y}-\bar{y}_{\ell}\|_{\Omega}$},\tiny{$\|\bar{p}-\bar{p}_{\ell}\|_{L^{\infty}(\Omega)}$},\tiny{$\|\bar{u}-\bar{\mathfrak{u}}_{\ell}\|_{L^{1}(\Omega)}$}},
                      legend pos=north east]
                \addplot table [x=dofs,y=error_y] {data_for_plots/Ex_1/Errors_ex1_atan_unif.dat};
                \addplot table [x=dofs,y=error_p] {data_for_plots/Ex_1/Errors_ex1_atan_unif.dat};
                 \addplot table [x=dofs,y=error_u] {data_for_plots/Ex_1/Errors_ex1_atan_unif.dat};
                
             \logLogSlopeTriangleBelow{0.7}{0.25}{0.15}{1}{black}{{\small $1$}}
              \nextgroupplot[title={Error contributions},ymode=log,xmode=log,
           legend entries={\hspace{-0.5cm}\tiny{$\|\bar{y}-\bar{y}_{\ell}\|_{\Omega}$},\tiny{$\|\bar{p}-\bar{p}_{\ell}\|_{L^{\infty}(\Omega)}$},\tiny{$\|\bar{u}-\bar{\mathfrak{u}}_{\ell}\|_{L^{1}(\Omega)}$}},
                      legend pos=north east]
                \addplot table [x=dofs,y=error_y] {data_for_plots/Ex_1/Errors_ex1_atan_adap.dat};
                \addplot table [x=dofs,y=error_p] {data_for_plots/Ex_1/Errors_ex1_atan_adap.dat};
                \addplot table [x=dofs,y=error_u] {data_for_plots/Ex_1/Errors_ex1_atan_adap.dat};
                
                \logLogSlopeTriangle{0.8}{0.2}{0.45}{1}{black}{{\small $1$}};
                \logLogSlopeTriangleBelow{0.7}{0.2}{0.15}{1}{black}{{\small $1$}}
    \end{groupplot}
\end{tikzpicture}
  \caption{Experimental convergence rates for individual contributions of the total error with uniform (left) and adaptive (right) refinements for the problem from section \ref{sec:ex_1} with $f(\cdot,y)=\arctan(y)$.}
\label{fig:ex_1_1}
\end{figure}


\begin{figure}
  \begin{tikzpicture}
  \begin{groupplot}[group style={group size= 2 by 1},width=0.4\textwidth,cycle list/Dark2-6,
                      cycle multiindex* list={
                          mark list*\nextlist
                          Dark2-6\nextlist},
                      every axis plot/.append style={ultra thick},
                      grid=major,
                      xlabel={Ndofs},]
         \nextgroupplot[title={Error contributions},ymode=log,xmode=log,
           legend entries={\hspace{-0.5cm}\tiny{$\|\bar{y}-\bar{y}_{\ell}\|_{\Omega}$},\tiny{$\|\bar{p}-\bar{p}_{\ell}\|_{L^{\infty}(\Omega)}$},\tiny{$\|\bar{u}-\bar{\mathfrak{u}}_{\ell}\|_{L^{1}(\Omega)}$}},
                      legend pos=north east]
                \addplot table [x=dofs,y=error_y] {data_for_plots/Ex_1/Errors_ex1_y3_unif.dat};
                \addplot table [x=dofs,y=error_p] {data_for_plots/Ex_1/Errors_ex1_y3_unif.dat};
                \addplot table [x=dofs,y=error_u] {data_for_plots/Ex_1/Errors_ex1_y3_unif.dat};

                \logLogSlopeTriangleBelow{0.7}{0.25}{0.15}{1}{black}{{\small $1$}}
              \nextgroupplot[title={Error contributions},ymode=log,xmode=log,
           legend entries={\hspace{-0.5cm}\tiny{$\|\bar{y}-\bar{y}_{\ell}\|_{\Omega}$},\tiny{$\|\bar{p}-\bar{p}_{\ell}\|_{L^{\infty}(\Omega)}$},\tiny{$\|\bar{u}-\bar{\mathfrak{u}}_{\ell}\|_{L^{1}(\Omega)}$}},
                      legend pos=north east]
                \addplot table [x=dofs,y=error_y] {data_for_plots/Ex_1/Errors_ex1_y3_adap.dat};
                \addplot table [x=dofs,y=error_p] {data_for_plots/Ex_1/Errors_ex1_y3_adap.dat};
                \addplot table [x=dofs,y=error_u] {data_for_plots/Ex_1/Errors_ex1_y3_adap.dat};
                
                \logLogSlopeTriangle{0.8}{0.2}{0.45}{1}{black}{{\small $1$}};
                \logLogSlopeTriangleBelow{0.7}{0.2}{0.15}{1}{black}{{\small $1$}}
    \end{groupplot}
\end{tikzpicture}
  \caption{Experimental convergence rates for individual contributions of the total error with uniform (left) and adaptive (right) refinements for the problem from section \ref{sec:ex_1} with $f(\cdot,y)=y^3$.}
\label{fig:ex_1_2}
\end{figure}


\begin{figure}[!ht]
\begin{minipage}[c]{0.27\textwidth}\centering
\includegraphics[trim={0 0 0 0},clip,width=4.4cm,height=4.4cm,scale=0.30]{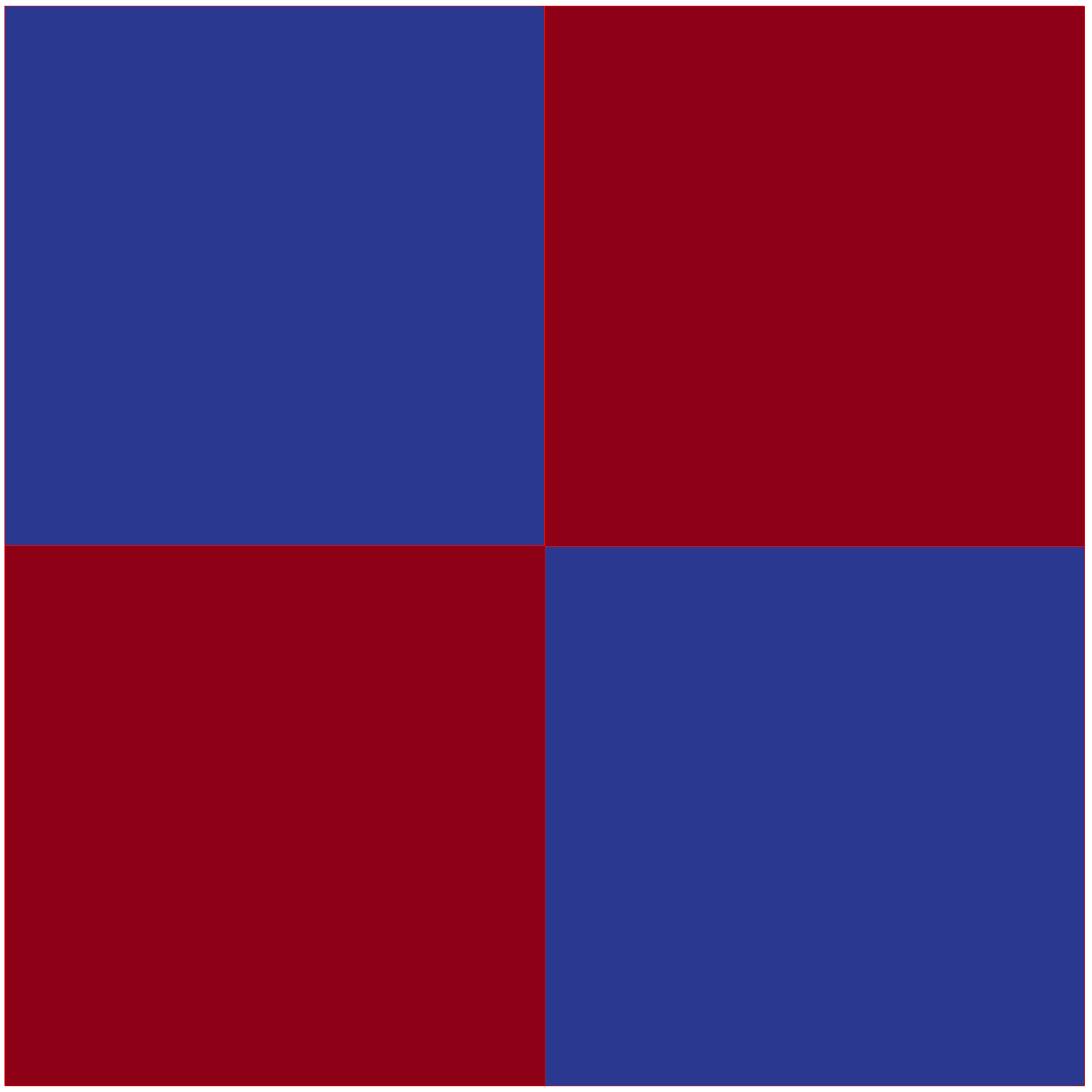}\\
\end{minipage}
\begin{minipage}[c]{0.27\textwidth}\centering
\includegraphics[trim={0 0 0 0},clip,width=4.4cm,height=4.4cm,scale=0.30]{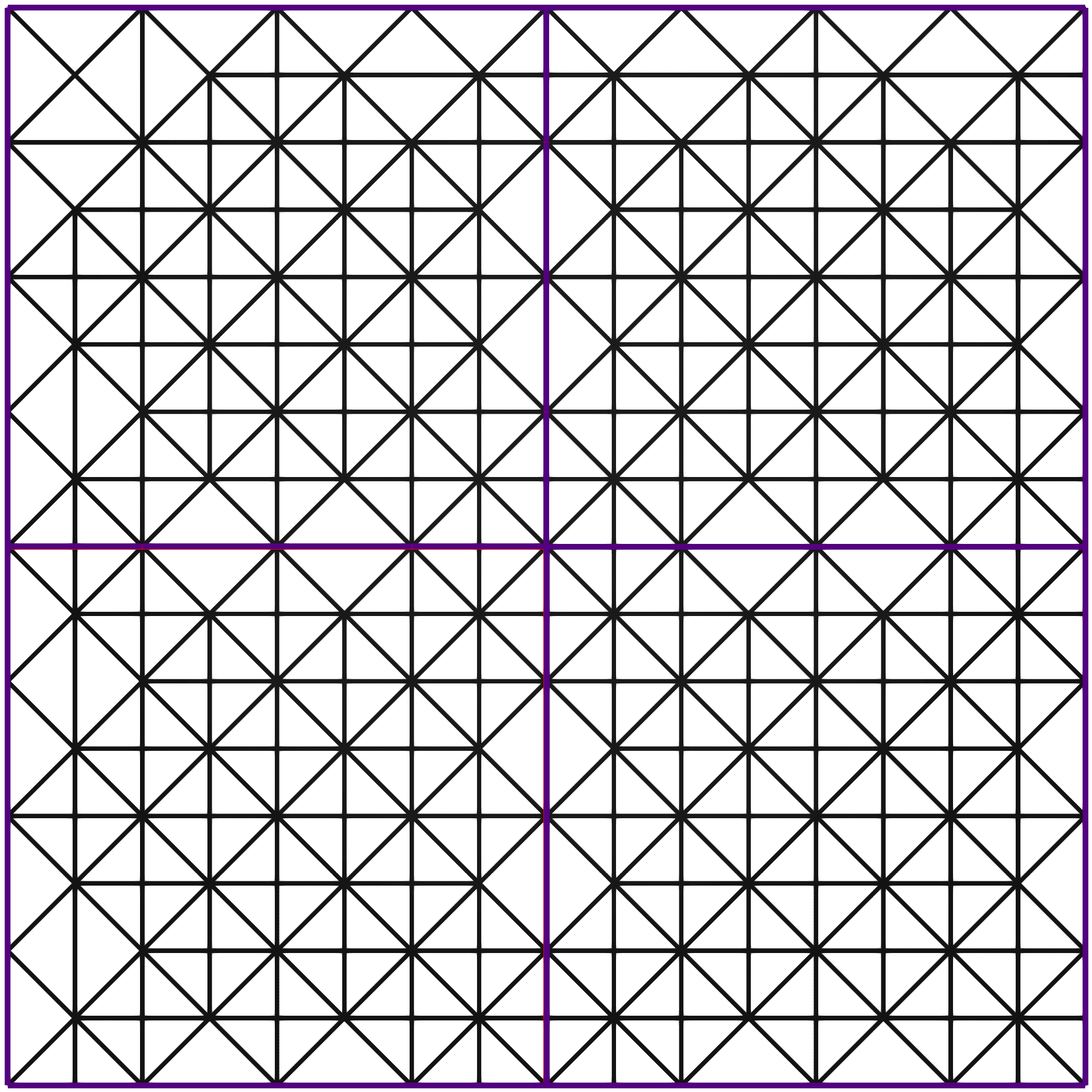}\\
\end{minipage}
\begin{minipage}[c]{0.27\textwidth}\centering
\includegraphics[trim={0 0 0 0},clip,width=4.4cm,height=4.4cm,scale=0.30]{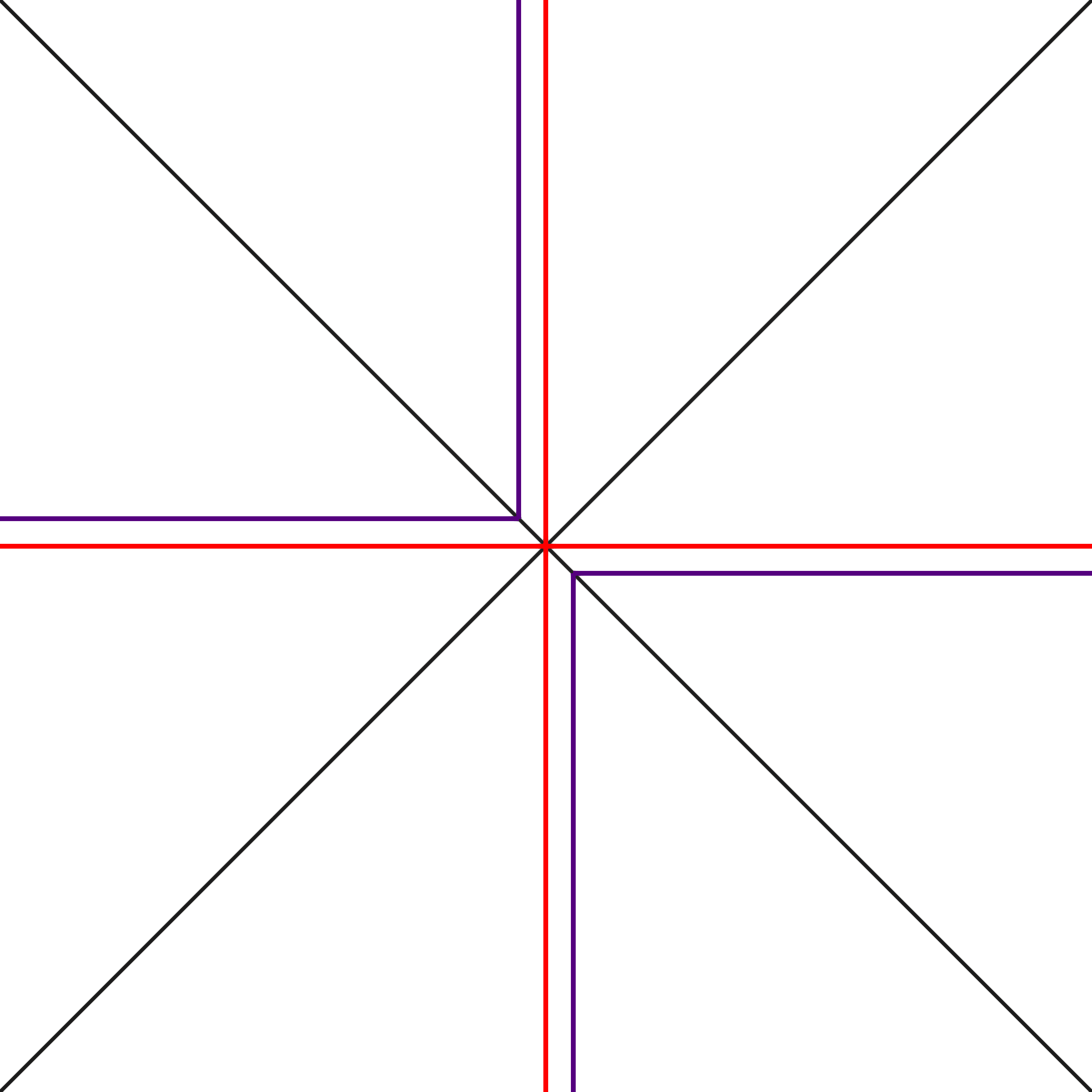}\\
\end{minipage}
\\
\begin{minipage}[c]{0.27\textwidth}\centering
\includegraphics[trim={0 0 0 0},clip,width=4.4cm,height=4.4cm,scale=0.30]{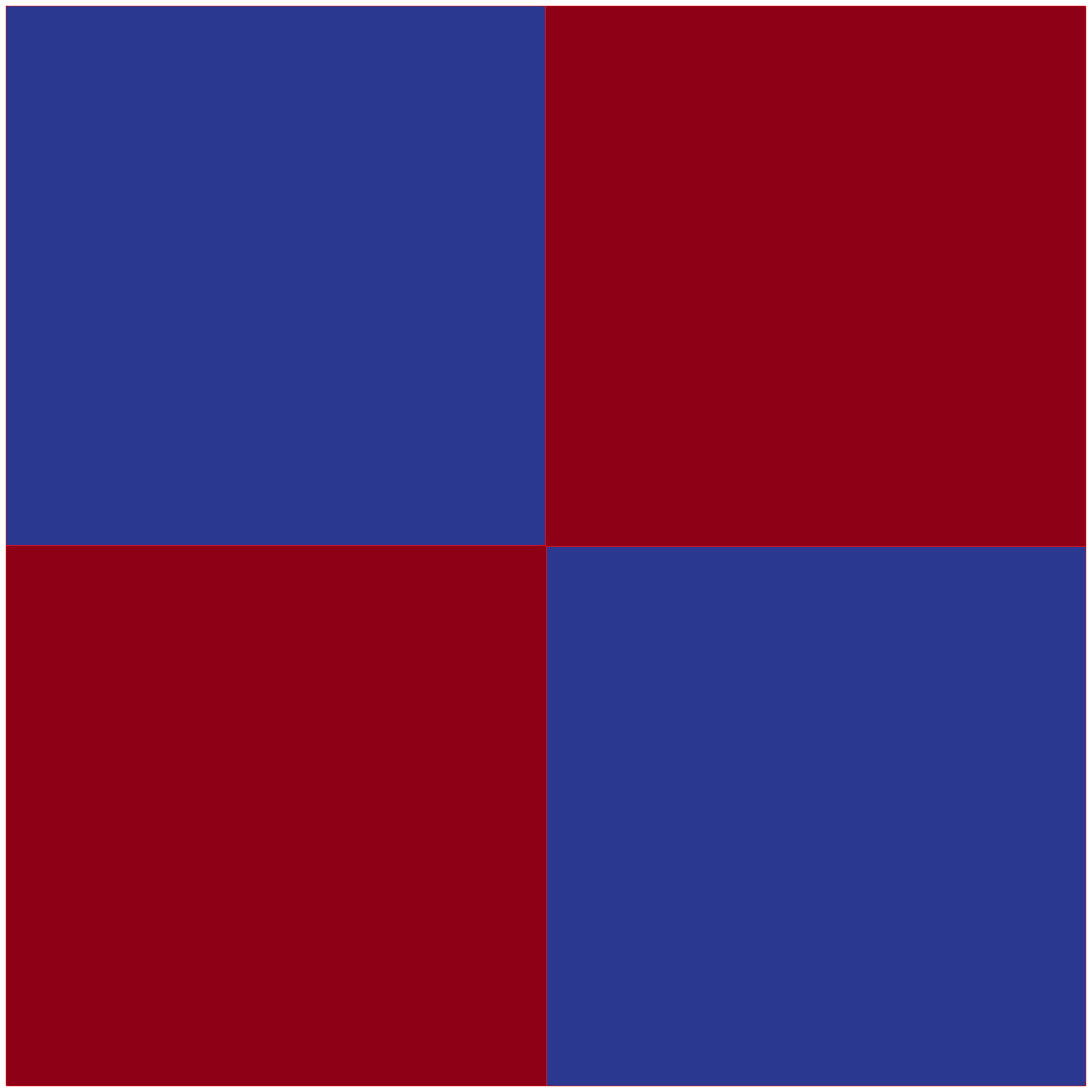}\\
\end{minipage}
\begin{minipage}[c]{0.27\textwidth}\centering
\includegraphics[trim={0 0 0 0},clip,width=4.4cm,height=4.4cm,scale=0.30]{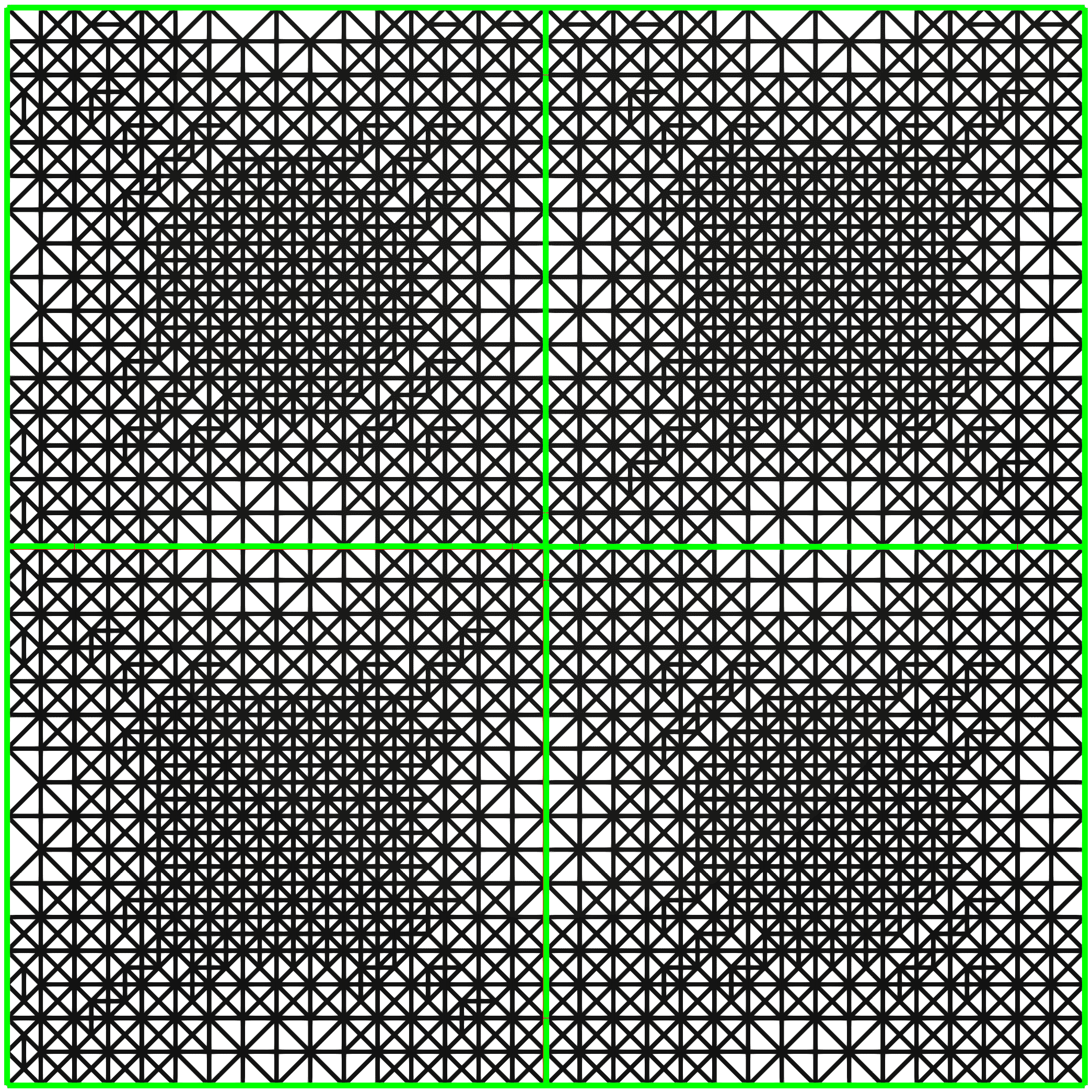}\\
\end{minipage}
\begin{minipage}[c]{0.27\textwidth}\centering
\includegraphics[trim={0 0 0 0},clip,width=4.4cm,height=4.4cm,scale=0.30]{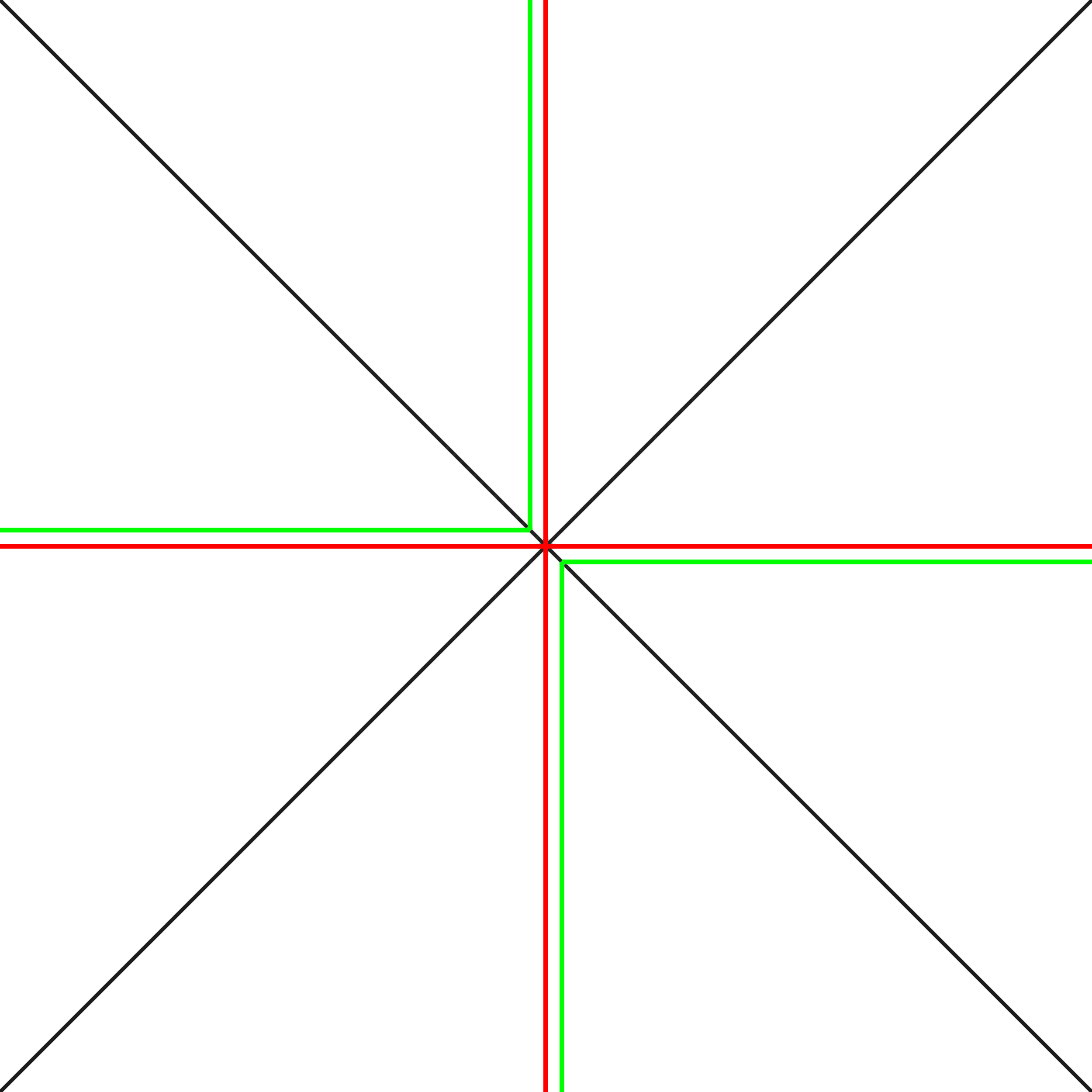}\\
\end{minipage}
\caption{Approximate control $\bar{\mathfrak{u}}_{\ell}$ and comparison of the continuous (red) and discrete switching sets on the adaptively refined meshes obtained after $5$ (upper row) and $10$ (lower row) iterations for the problem from section \ref{sec:ex_1} with $f(\cdot,y)=\arctan(y)$; in the red region $\bar{\mathfrak{u}}_{\ell} = 1$  whereas in the blue region $\bar{\mathfrak{u}}_{\ell} = -1$.}
\label{fig:ex_1_3}
\end{figure}


\subsection{Exact solution on non-convex domain}\label{sec:ex_2}
We consider $\Omega=(-1,1)^2\setminus[0,1)\times(-1,0]$, $a=-1$, $b=1$, and $\gamma=1$.
The functions $\mathfrak{f}$ and $y_{\Omega}$ are such that the exact optimal state and adjoint state are given, in polar coordinates $(\rho,\omega)$ with $\omega\in[0,3\pi/2]$, by
\begin{align*}
\bar{y}(\rho,\omega)&=\sin(\pi(\rho\sin(\omega)+1)/2)\sin(\pi(\rho \cos(\omega)+1)/2)\rho^{2/3}\sin(2\omega/3), 
\\
\bar{p}(\rho,\omega)&= (0.5-\rho)\bar{y}(\rho,\omega).
\end{align*}
The nonlinear function in this example is $f(\cdot,y)=y^3$.
In this examples, we investigate the performance of the error estimator $\eta_{ocp}$ when we violate the convexity assumption on the domain, considered in our analysis.

The numerical results for this example are shown in Figures \ref{fig:ex_2_1}, \ref{fig:ex_2_2}, and \ref{fig:ex_2_3}.
In Fig. \ref{fig:ex_2_1}, we display experimental convergence rates for each contribution of the total error when uniform and adaptive refinements are considered.
We observe that the proposed adaptive procedure outperforms uniform refinement.
In particular, it exhibits optimal convergence rates for each contribution of the total error.
In Fig. \ref{fig:ex_2_2}, we show experimental convergence rates for all the individual contributions of the error estimator $\eta_{ocp}$ and the effectivity index, when adaptive refinement is considered.
We observe that the effectivity index stabilizes around the value $2$ when the total number of degrees of freedom increases.
In Fig. \ref{fig:ex_2_3}, we present an approximate optimal control $\bar{\mathfrak{u}}_{\ell}$ and its associated adaptively refined meshes obtained after 5 and 10 iterations. 
It can be observed that the refinement is being concentrated close to the re-entrant corner (0,0).
Moreover, even when the adaptive refinement is not necessarily concentrated near the discrete switching set, this set seems to converge to the continuous one as the total number of degrees of freedom increases. 
The classical bang-bang structure of the control is also observed.


\begin{figure}
  \begin{tikzpicture}
  \begin{groupplot}[group style={group size= 2 by 1},width=0.4\textwidth,cycle list/Dark2-6,
                      cycle multiindex* list={
                          mark list*\nextlist
                          Dark2-6\nextlist},
                      every axis plot/.append style={ultra thick},
                      grid=major,
                      xlabel={Ndofs},]
         \nextgroupplot[title={Error contributions},ymode=log,xmode=log,
           legend entries={\hspace{-0.5cm}\tiny{$\|\bar{y}-\bar{y}_{\ell}\|_{\Omega}$},\tiny{$\|\bar{p}-\bar{p}_{\ell}\|_{L^{\infty}(\Omega)}$},\tiny{$\|\bar{u}-\bar{\mathfrak{u}}_{\ell}\|_{L^{1}(\Omega)}$}},
                      legend pos=north east]
                \addplot table [x=dofs,y=error_y] {data_for_plots/Ex_2/Errors_ex2_unif.dat};
                \addplot table [x=dofs,y=error_p] {data_for_plots/Ex_2/Errors_ex2_unif.dat};
                 \addplot table [x=dofs,y=error_u] {data_for_plots/Ex_2/Errors_ex2_unif.dat};
                
                \logLogSlopeTriangle{0.9}{0.25}{0.42}{1/3}{black}{{\small $\frac{1}{3}$}};
                \logLogSlopeTriangleBelow{0.8}{0.25}{0.1}{2/3}{black}{{\small $\frac{2}{3}$}}
              \nextgroupplot[title={Error contributions},ymode=log,xmode=log,
           legend entries={\hspace{-0.5cm}\tiny{$\|\bar{y}-\bar{y}_{\ell}\|_{\Omega}$},\tiny{$\|\bar{p}-\bar{p}_{\ell}\|_{L^{\infty}(\Omega)}$},\tiny{$\|\bar{u}-\bar{\mathfrak{u}}_{\ell}\|_{L^{1}(\Omega)}$}},
                      legend pos=north east]
                \addplot table [x=dofs,y=error_y] {data_for_plots/Ex_2/Errors_ex2_adap.dat};
                \addplot table [x=dofs,y=error_p] {data_for_plots/Ex_2/Errors_ex2_adap.dat};
                \addplot table [x=dofs,y=error_u] {data_for_plots/Ex_2/Errors_ex2_adap.dat};
                
                \logLogSlopeTriangle{0.9}{0.25}{0.35}{1}{black}{{\small $1$}};
                \logLogSlopeTriangleBelow{0.7}{0.25}{0.15}{1}{black}{{\small $1$}}
    \end{groupplot}
\end{tikzpicture}
  \caption{Experimental convergence rates for individual contributions of the total error with uniform (left) and adaptive (right) refinements for the problem from section \ref{sec:ex_2}.}
\label{fig:ex_2_1}
\end{figure}


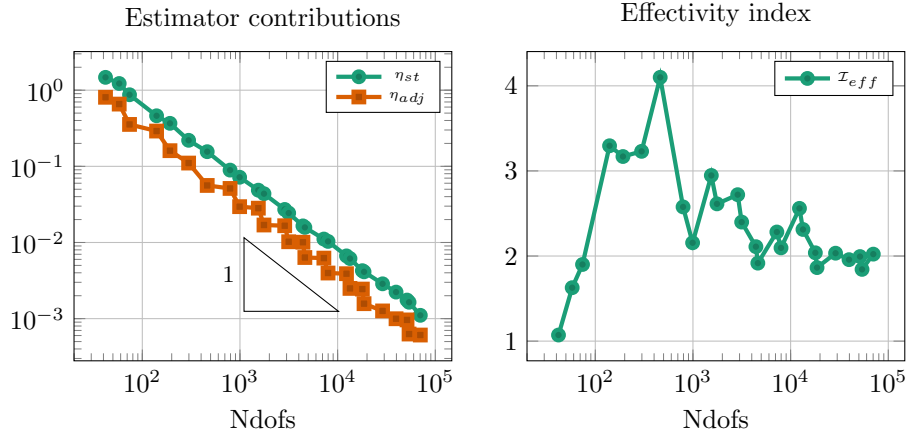
\begin{figure}
  \begin{tikzpicture}
  \begin{groupplot}[group style={group size= 2 by 1},width=0.4\textwidth,cycle list/Dark2-6,
                      cycle multiindex* list={
                          mark list*\nextlist
                          Dark2-6\nextlist},
                      every axis plot/.append style={ultra thick},
                      grid=major,
                      xlabel={Ndofs},]
         \nextgroupplot[title={Estimator contributions},ymode=log,xmode=log,
           legend entries={\tiny{$\eta_{st}$},\tiny{$\eta_{adj}$}},
                      legend pos=north east]
                \addplot table [x=dofs,y=est_y] {data_for_plots/Ex_2/Errors_ex2_adap.dat};
                \addplot table [x=dofs,y=est_p_inf] {data_for_plots/Ex_2/Errors_ex2_adap.dat};
                          
                \logLogSlopeTriangleBelow{0.7}{0.25}{0.16}{1}{black}{{\small $1$}}
                
              \nextgroupplot[title={Effectivity index}, xmode=log,         
           legend entries={\tiny{$\mathcal{I}_{eff}$}},
                      legend pos=north east]
                \addplot table [x=dofs,y=eff_index] {data_for_plots/Ex_2/Errors_ex2_adap.dat};
    \end{groupplot}
\end{tikzpicture}
  \caption{Experimental convergence rates for individual contributions of the estimator $\eta_{ocp}$ (left) and effectivity index (right) with adaptive refinement for the problem from section \ref{sec:ex_2}.}
\label{fig:ex_2_2}
\end{figure}


\begin{figure}[!ht]
\begin{minipage}[c]{0.27\textwidth}\centering
\includegraphics[trim={0 0 0 0},clip,width=4.4cm,height=4.4cm,scale=0.30]{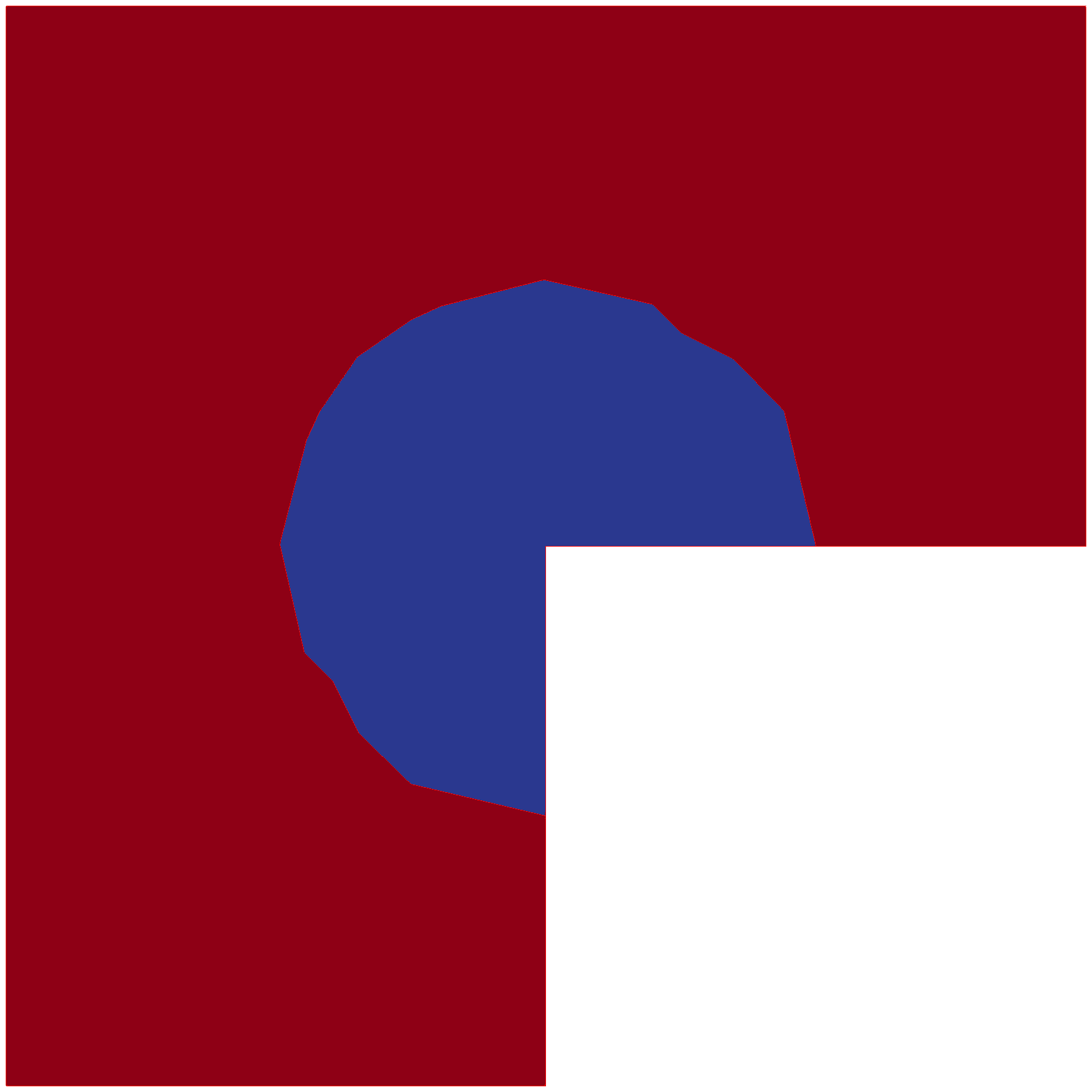}\\
\end{minipage}
\begin{minipage}[c]{0.27\textwidth}\centering
\includegraphics[trim={0 0 0 0},clip,width=4.4cm,height=4.4cm,scale=0.30]{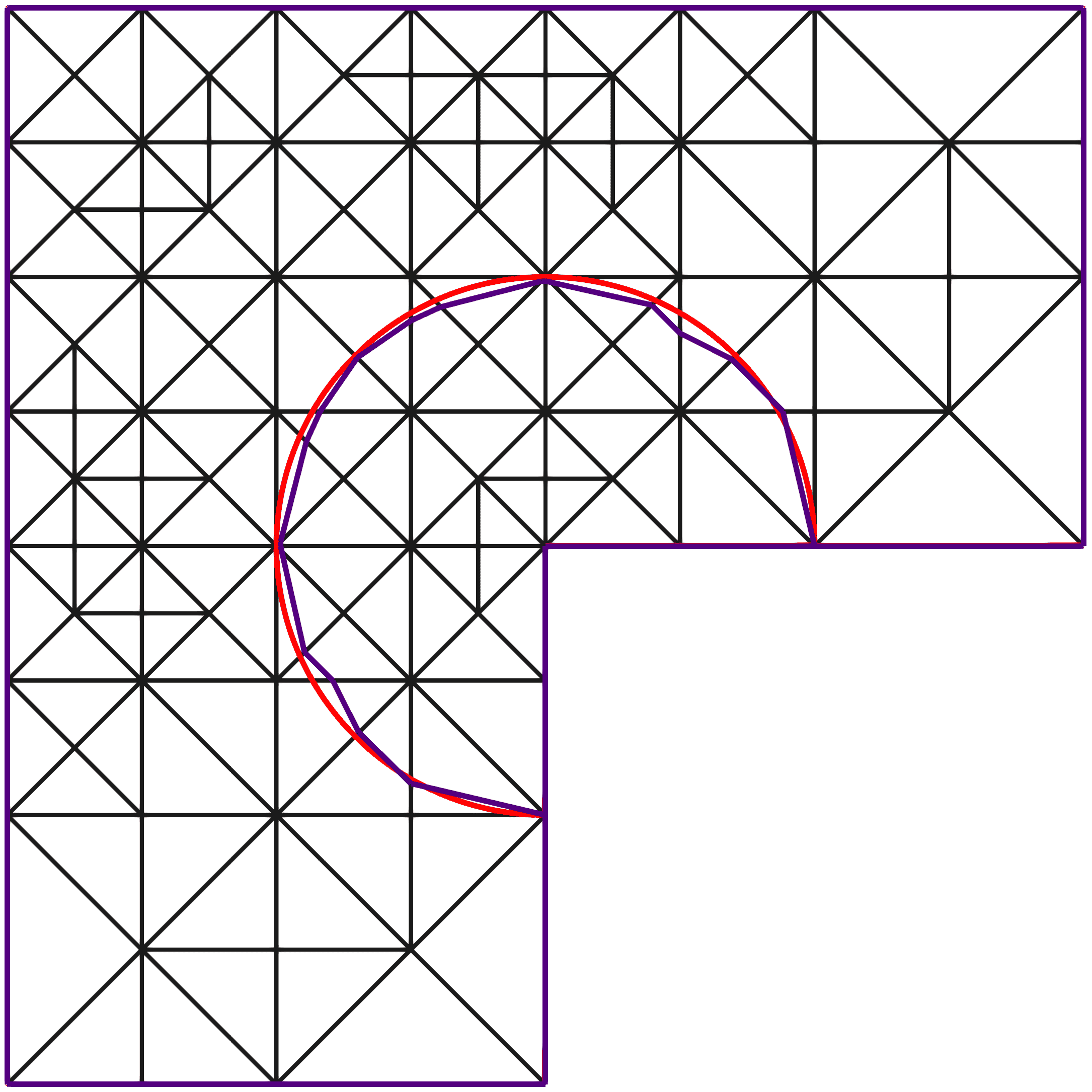}\\
\end{minipage}
\begin{minipage}[c]{0.27\textwidth}\centering
\includegraphics[trim={0 0 0 0},clip,width=4.4cm,height=4.4cm,scale=0.30]{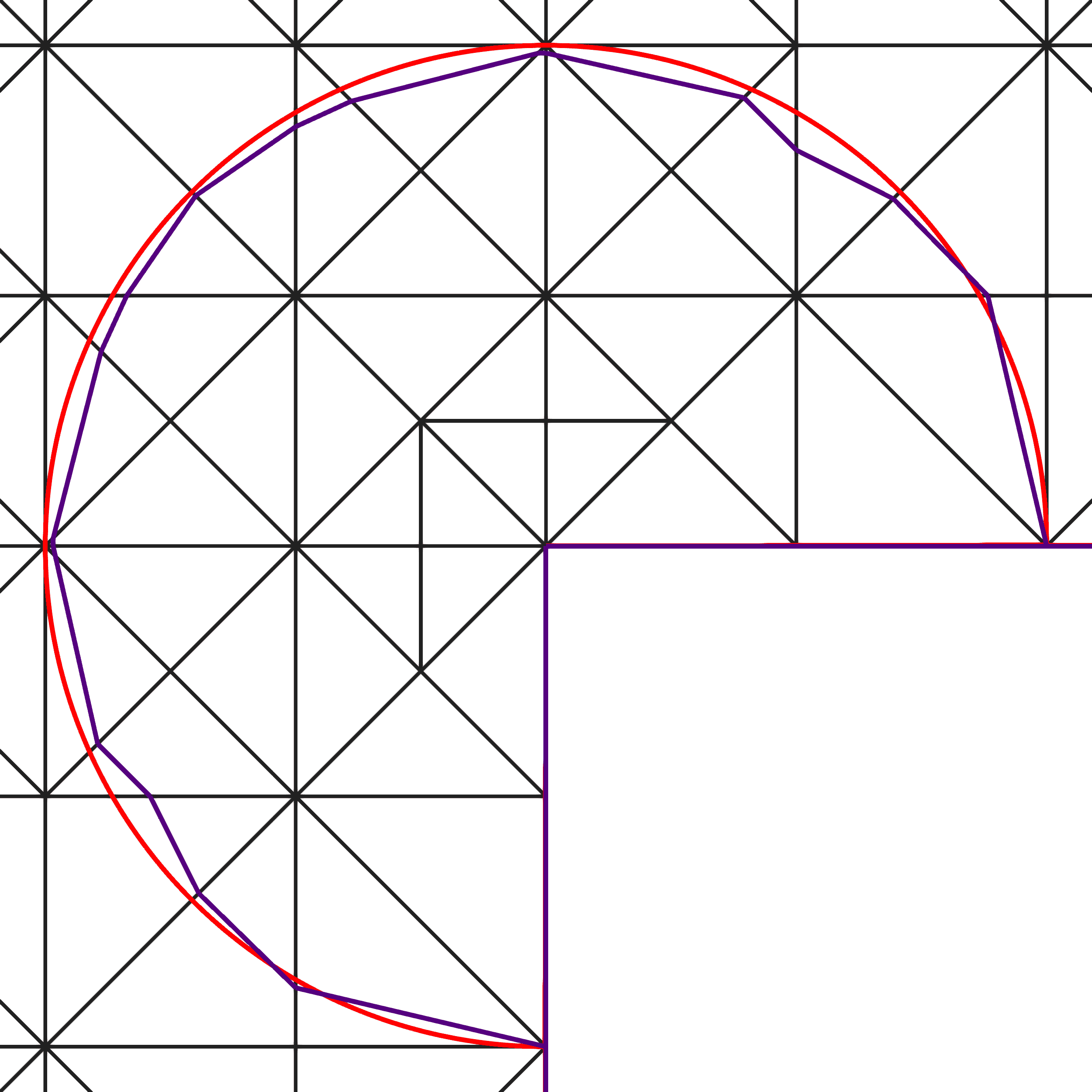}\\
\end{minipage}
\\
\begin{minipage}[c]{0.27\textwidth}\centering
\includegraphics[trim={0 0 0 0},clip,width=4.4cm,height=4.4cm,scale=0.30]{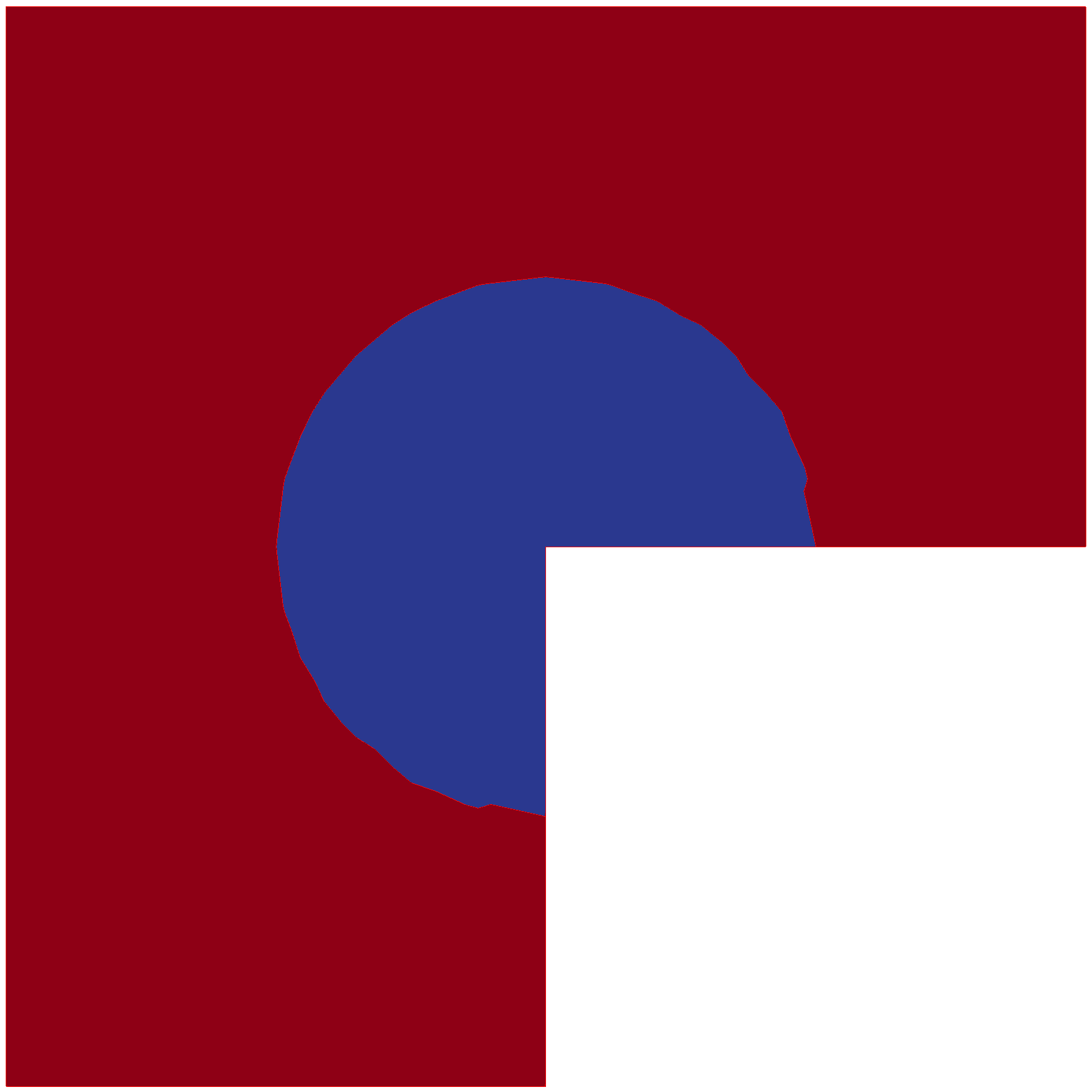}\\
\end{minipage}
\begin{minipage}[c]{0.27\textwidth}\centering
\includegraphics[trim={0 0 0 0},clip,width=4.4cm,height=4.4cm,scale=0.30]{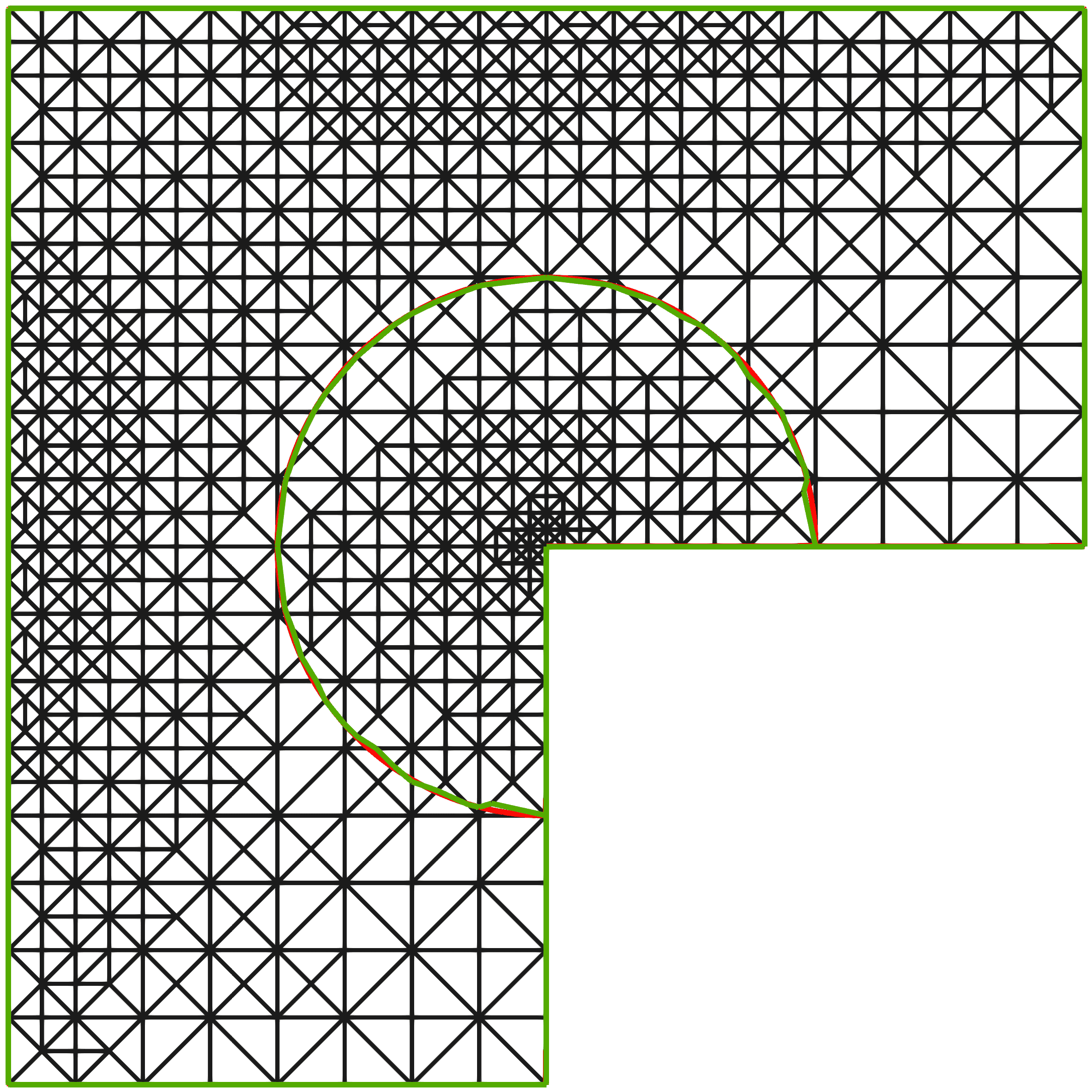}\\
\end{minipage}
\begin{minipage}[c]{0.27\textwidth}\centering
\includegraphics[trim={0 0 0 0},clip,width=4.4cm,height=4.4cm,scale=0.30]{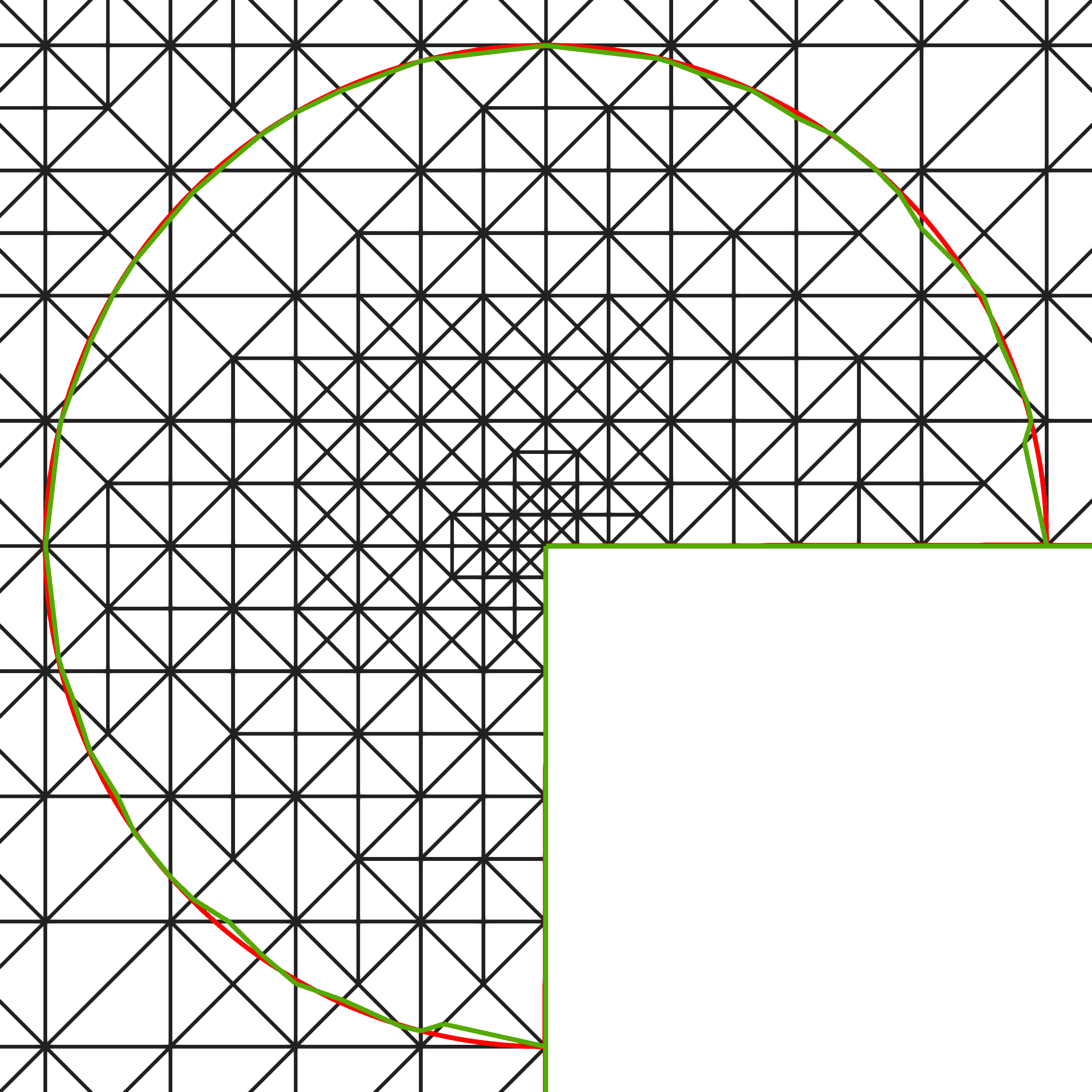}\\
\end{minipage}
\caption{Approximate control $\bar{\mathfrak{u}}_{\ell}$ and comparison of the continuous (red) and discrete switching sets on the adaptively refined meshes obtained after $5$ (upper row) and $10$ (lower row) iterations for the problem from section \ref{sec:ex_2}; in the red region $\bar{\mathfrak{u}}_{\ell} = 1$  whereas in the blue region $\bar{\mathfrak{u}}_{\ell} = -1$.}
\label{fig:ex_2_3}
\end{figure}


\subsection{Exact solution on a disk}\label{sec:ex_3}

Inspired by the numerical experiment provided in \cite[Section 7]{CM2021}, we set $\Omega:=\{(x_1,x_2) ~ : \ x_1^2+x_2^2 \leq 1\}$, that is, the unit two-dimensional disk. 
We also choose $a=-1$, $b=1$, $f(\cdot,y)=y^3$, and $\mathfrak{f}$ and $y_{\Omega}$ such that 
\begin{align*}
  & \bar{y}(x_{1},x_{2}) = 0.5(1 - x_1^2 - x_2^2), \\ 
  & \bar{p}(x_{1},x_{2}) = 2\bar{y}(x_{1},x_{2})\left((1/12-x_1)|1/12-x_1| + (1/12-x_2)|1/12-x_2|\right), \\
  & \bar{u}(x_{1},x_{2}) = -\mathrm{sign}(\bar{p}(x_{1},x_{2}))
\end{align*}
for $(x_{1},x_{2})\in \Omega$.
We note that $\bar{p}$ corresponds to a polynomial of order $2$.
It can be proved that $\bar{u}$ satisfies the growth condition \eqref{eq:assumption_control_growth} with $\gamma = 0.5$. 
In this example, we study the influence of $\gamma$ on the adaptive refinement steered by the error indicator \eqref{def:total_indicator} (see Algorithm \ref{Algorithm}) by choosing $\gamma=0.5$ and $\gamma = 1$.

The numerical results for this example are shown in Figures \ref{fig:ex_3_1}, \ref{fig:ex_3_2},  \ref{fig:ex_3_3}, and \ref{fig:ex_3_4}.
In Fig. \ref{fig:ex_3_1}, we display experimental convergence rates for each contribution of the total error when uniform and adaptive (with $\gamma = 0.5$) refinements are considered. 
We observe that the error associated with the control variable does not exhibit an optimal convergence rate, even when considering adaptive refinement; see Remark \ref{rmk:convergences}.
In contrast, in Fig. \ref{fig:ex_3_2}--which is related to the case $\gamma = 1$ in the error indicator \eqref{def:total_indicator}--we observe optimal convergence rates for each contribution of the total error. 
In Figs. \ref{fig:ex_3_3} ($\gamma = 0.5$) and \ref{fig:ex_3_4} ($\gamma=1$), we present an approximate optimal control $\bar{\mathfrak{u}}_{\ell}$ and its associated adaptively refined meshes obtained after 5 and 10 iterations.
In both figures, we observe the classical bang-bang structure of the control and that the discrete switching set seems to converge to the continuous one as the total number of degrees of freedom increases. 

\begin{remark}[reduced convergences rates when $\gamma=0.5$]\label{rmk:convergences}
    In Fig. \ref{fig:ex_3_1} we observed that $\|\bar{u}-\bar{\mathfrak{u}}_{\ell}\|_{L^{1}(\Omega)}$ does not converge with an optimal rate when using $\gamma = 0.5$ in the adaptive refinement. 
    This may be due to the fact that a large amount of elements are being refined in this case (see Fig. \ref{fig:ex_3_3}), which implies that the refinement is to a great extent uniform. 
    In contrast, when using $\gamma = 1$ in the adaptive refinement, we are able to recover optimal convergence rates for $\|\bar{u}-\bar{\mathfrak{u}}_{\ell}\|_{L^{1}(\Omega)}$.
\end{remark}


\begin{figure}
  \begin{tikzpicture}
  \begin{groupplot}[group style={group size= 2 by 1},width=0.4\textwidth,cycle list/Dark2-6,
                      cycle multiindex* list={
                          mark list*\nextlist
                          Dark2-6\nextlist},
                      every axis plot/.append style={ultra thick},
                      grid=major,
                      xlabel={Ndofs},]
         \nextgroupplot[title={Error contributions},ymode=log,xmode=log,
           legend entries={\hspace{-0.5cm}\tiny{$\|\bar{y}-\bar{y}_{\ell}\|_{\Omega}$},\tiny{$\|\bar{p}-\bar{p}_{\ell}\|_{L^{\infty}(\Omega)}$},\tiny{$\|\bar{u}-\bar{\mathfrak{u}}_{\ell}\|_{L^{1}(\Omega)}$}},
                      legend pos=north east]
                \addplot table [x=dofs,y=error_y] {data_for_plots/Ex_3/Errors_ex3_unif.dat};
                \addplot table [x=dofs,y=error_p] {data_for_plots/Ex_3/Errors_ex3_unif.dat};
                 \addplot table [x=dofs,y=error_u] {data_for_plots/Ex_3/Errors_ex3_unif.dat};

                \logLogSlopeTriangle{0.9}{0.25}{0.40}{2/3}{black}{{\small $\frac{2}{3}$}};
                \logLogSlopeTriangleBelow{0.8}{0.25}{0.1}{1}{black}{{\small $1$}}
              \nextgroupplot[title={Error contributions},ymode=log,xmode=log,
           legend entries={\hspace{-0.5cm}\tiny{$\|\bar{y}-\bar{y}_{\ell}\|_{\Omega}$},\tiny{$\|\bar{p}-\bar{p}_{\ell}\|_{L^{\infty}(\Omega)}$},\tiny{$\|\bar{u}-\bar{\mathfrak{u}}_{\ell}\|_{L^{1}(\Omega)}$}},
                      legend pos=north east]
                \addplot table [x=dofs,y=error_y] {data_for_plots/Ex_3/Errors_ex3_adap05.dat};
                \addplot table [x=dofs,y=error_p] {data_for_plots/Ex_3/Errors_ex3_adap05.dat};
                 \addplot table [x=dofs,y=error_u] {data_for_plots/Ex_3/Errors_ex3_adap05.dat};
                
                \logLogSlopeTriangle{0.9}{0.25}{0.45}{2/3}{black}{{\small $\frac{2}{3}$}};
                \logLogSlopeTriangleBelow{0.7}{0.25}{0.15}{1}{black}{{\small $1$}}
    \end{groupplot}
\end{tikzpicture}
  \caption{Experimental convergence rates for individual contributions of the total error with uniform (left) and adaptive (right) refinements for the problem from section \ref{sec:ex_3} with $\gamma=0.5$.}
\label{fig:ex_3_1}
\end{figure}


\begin{figure}
  \begin{tikzpicture}
  \begin{groupplot}[group style={group size= 2 by 1},width=0.4\textwidth,cycle list/Dark2-6,
                      cycle multiindex* list={
                          mark list*\nextlist
                          Dark2-6\nextlist},
                      every axis plot/.append style={ultra thick},
                      grid=major,
                      xlabel={Ndofs},]
         \nextgroupplot[title={Error contributions},ymode=log,xmode=log,
           legend entries={\hspace{-0.5cm}\tiny{$\|\bar{y}-\bar{y}_{\ell}\|_{\Omega}$},\tiny{$\|\bar{p}-\bar{p}_{\ell}\|_{L^{\infty}(\Omega)}$},\tiny{$\|\bar{u}-\bar{\mathfrak{u}}_{\ell}\|_{L^{1}(\Omega)}$}},
                      legend pos=north east]
                \addplot table [x=dofs,y=error_y] {data_for_plots/Ex_3/Errors_ex3_unif.dat};
                \addplot table [x=dofs,y=error_p] {data_for_plots/Ex_3/Errors_ex3_unif.dat};
                 \addplot table [x=dofs,y=error_u] {data_for_plots/Ex_3/Errors_ex3_unif.dat};

                \logLogSlopeTriangle{0.9}{0.25}{0.40}{2/3}{black}{{\small $\frac{2}{3}$}};
                \logLogSlopeTriangleBelow{0.8}{0.25}{0.1}{1}{black}{{\small $1$}}
              \nextgroupplot[title={Error contributions},ymode=log,xmode=log,
           legend entries={\hspace{-0.5cm}\tiny{$\|\bar{y}-\bar{y}_{\ell}\|_{\Omega}$},\tiny{$\|\bar{p}-\bar{p}_{\ell}\|_{L^{\infty}(\Omega)}$},\tiny{$\|\bar{u}-\bar{\mathfrak{u}}_{\ell}\|_{L^{1}(\Omega)}$}},
                      legend pos=north east]
                \addplot table [x=dofs,y=error_y] {data_for_plots/Ex_3/Errors_ex3_adap10.dat};
                \addplot table [x=dofs,y=error_p] {data_for_plots/Ex_3/Errors_ex3_adap10.dat};
                \addplot table [x=dofs,y=error_u] {data_for_plots/Ex_3/Errors_ex3_adap10.dat};
                
               \logLogSlopeTriangle{0.9}{0.25}{0.41}{1}{black}{{\small $1$}};
                \logLogSlopeTriangleBelow{0.7}{0.25}{0.15}{1}{black}{{\small $1$}}
    \end{groupplot}
\end{tikzpicture}
  \caption{Experimental convergence rates for individual contributions of the total error with uniform (left) and adaptive (right) refinements for the problem from section \ref{sec:ex_3} with $\gamma=1$.}
\label{fig:ex_3_2}
\end{figure}


\begin{figure}[!ht]
\begin{minipage}[c]{0.27\textwidth}\centering
\includegraphics[trim={0 0 0 0},clip,width=4.4cm,height=4.4cm,scale=0.30]{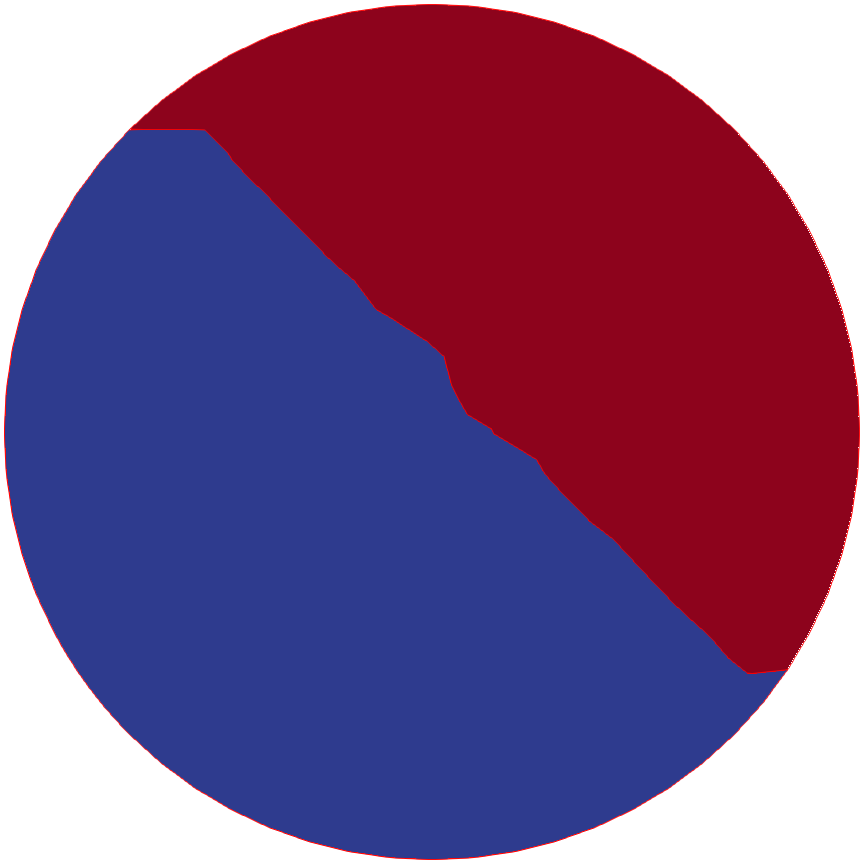}\\
\end{minipage}
\begin{minipage}[c]{0.27\textwidth}\centering
\includegraphics[trim={0 0 0 0},clip,width=4.4cm,height=4.4cm,scale=0.30]{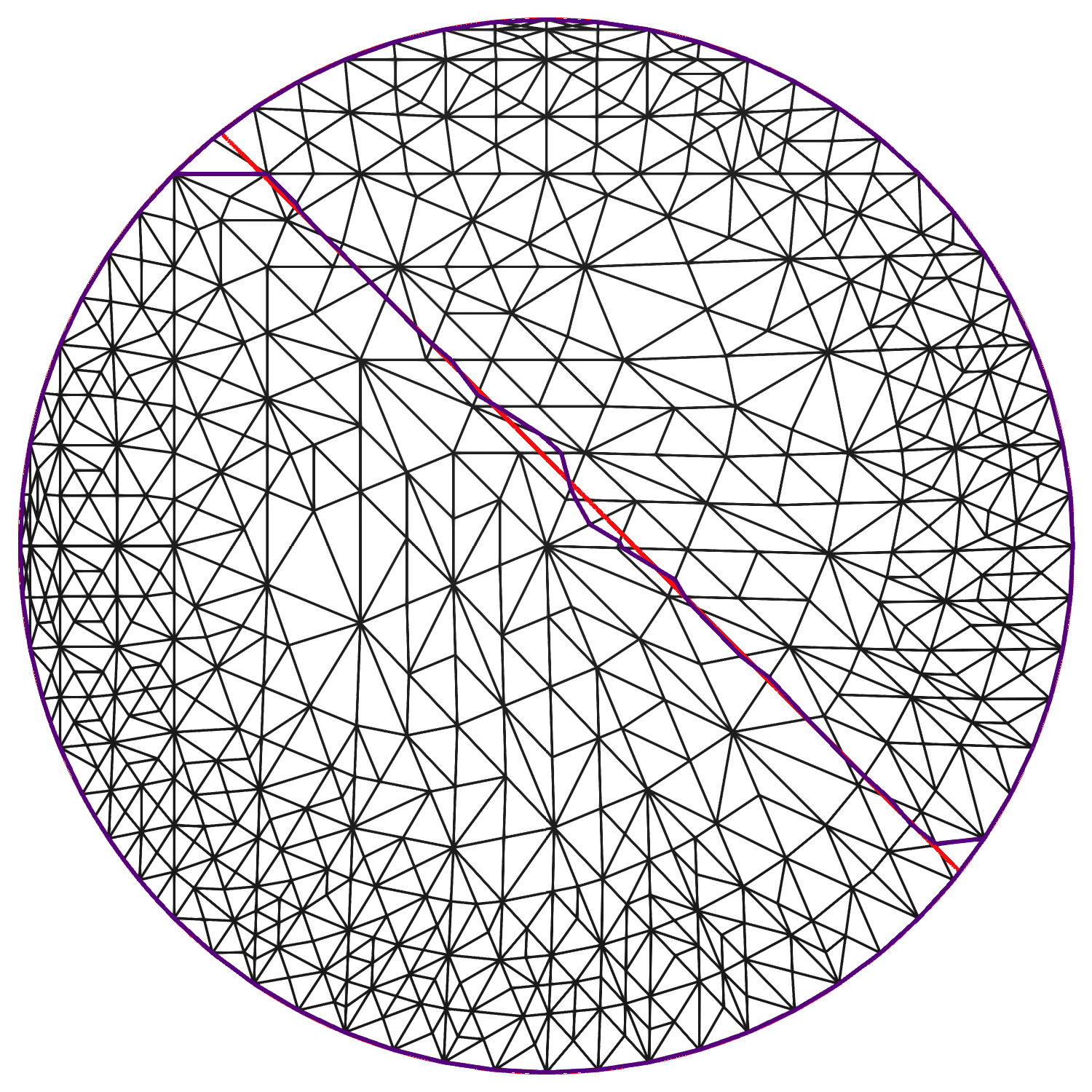}\\
\end{minipage}
\begin{minipage}[c]{0.27\textwidth}\centering
\includegraphics[trim={0 0 0 0},clip,width=4.4cm,height=4.4cm,scale=0.30]{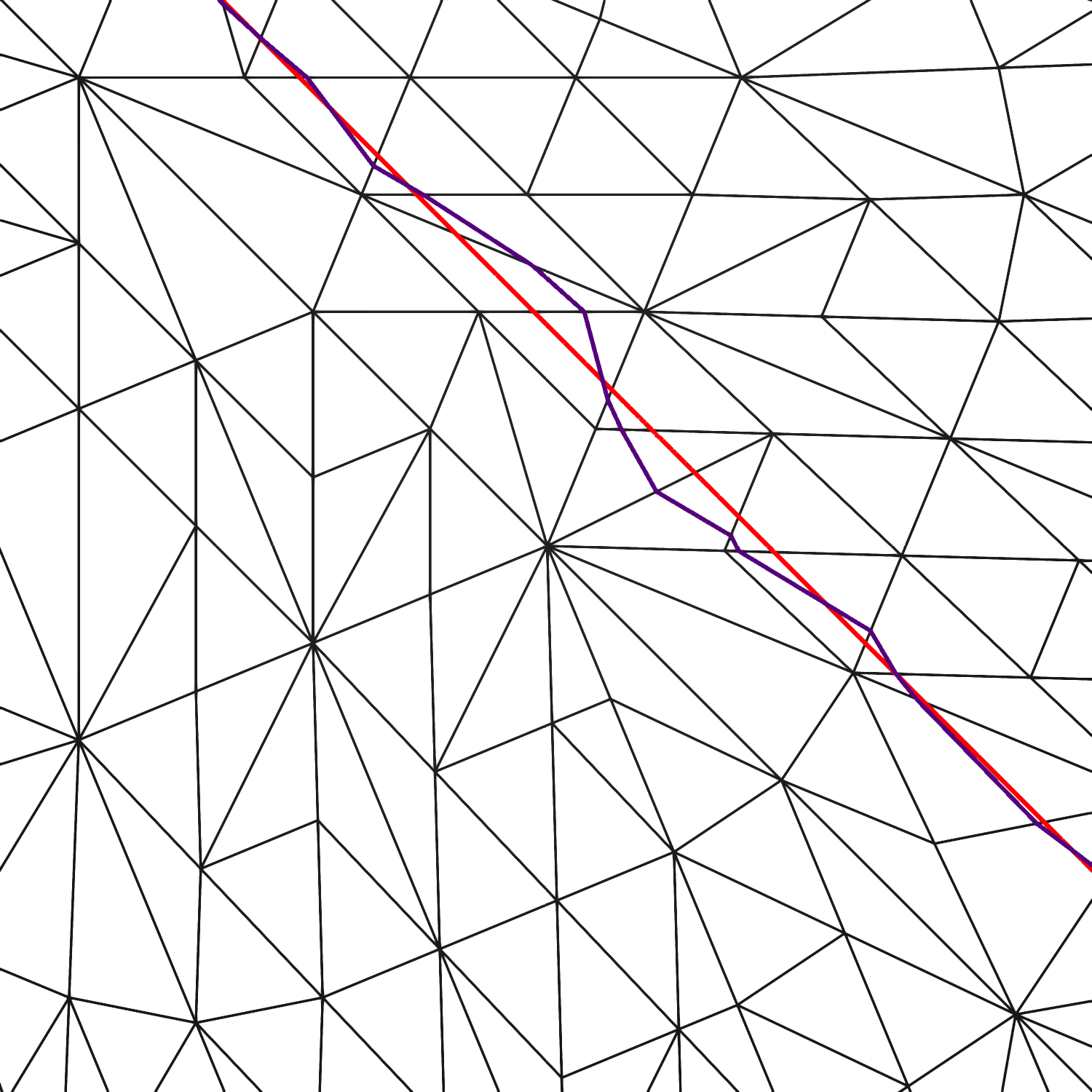}\\
\end{minipage}
\\
\begin{minipage}[c]{0.27\textwidth}\centering
\includegraphics[trim={0 0 0 0},clip,width=4.4cm,height=4.4cm,scale=0.30]{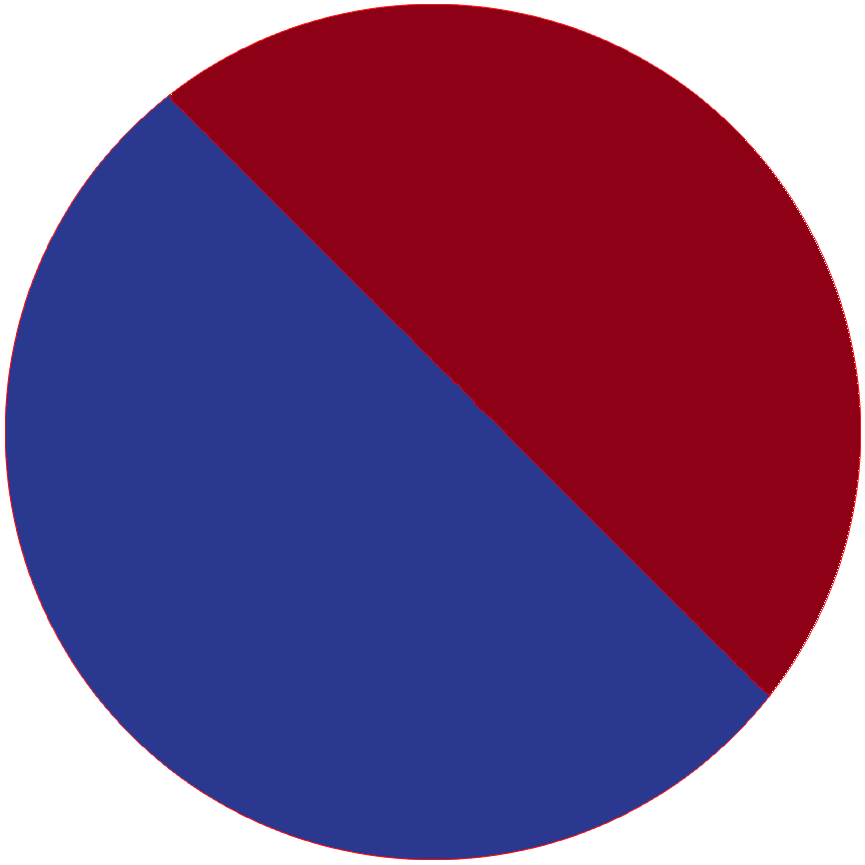}\\
\end{minipage}
\begin{minipage}[c]{0.27\textwidth}\centering
\includegraphics[trim={0 0 0 0},clip,width=4.4cm,height=4.4cm,scale=0.30]{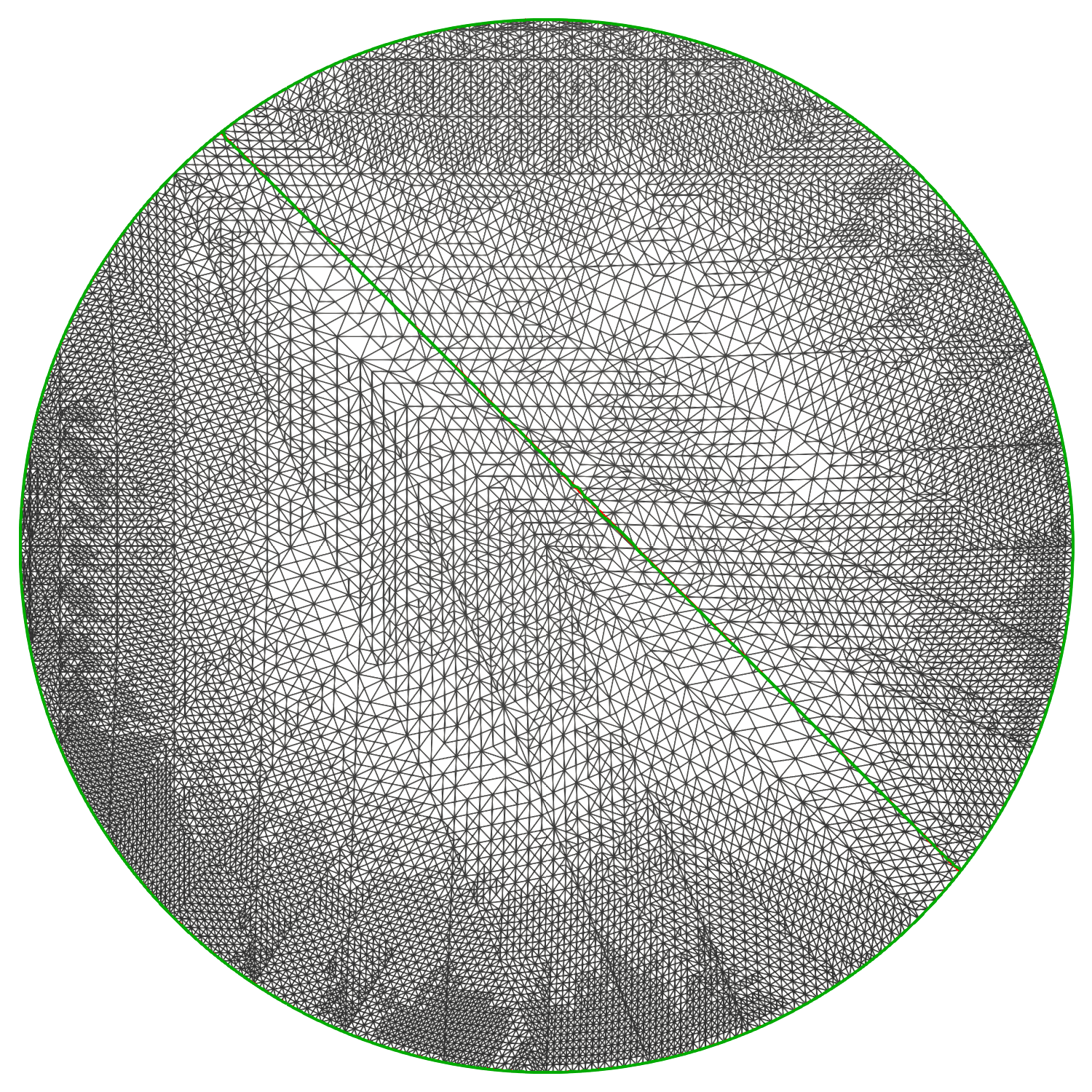}\\
\end{minipage}
\begin{minipage}[c]{0.27\textwidth}\centering
\includegraphics[trim={0 0 0 0},clip,width=4.4cm,height=4.4cm,scale=0.30]{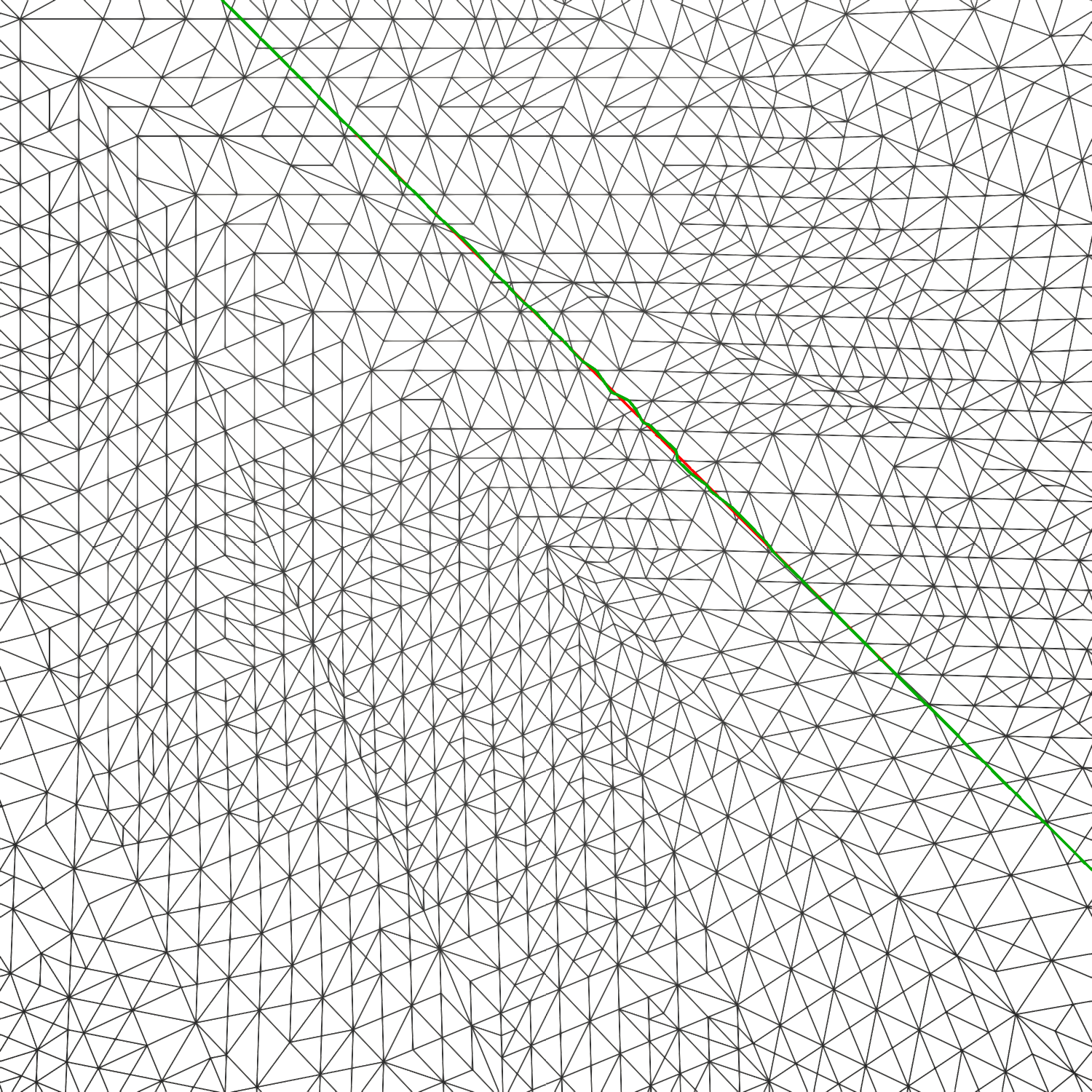}\\
\end{minipage}
\caption{Approximate control $\bar{\mathfrak{u}}_{\ell}$ and comparison of the continuous (red) and discrete switching sets on the adaptively refined meshes obtained after $5$ (upper row) and $10$ (lower row) iterations for the problem from section \ref{sec:ex_3} with $\gamma = 0.5$; in the red region $\bar{\mathfrak{u}}_{\ell} = 1$  whereas in the blue region $\bar{\mathfrak{u}}_{\ell} = -1$.}
\label{fig:ex_3_3}
\end{figure}


\begin{figure}[!ht]
\begin{minipage}[c]{0.27\textwidth}\centering
\includegraphics[trim={0 0 0 0},clip,width=4.4cm,height=4.4cm,scale=0.30]{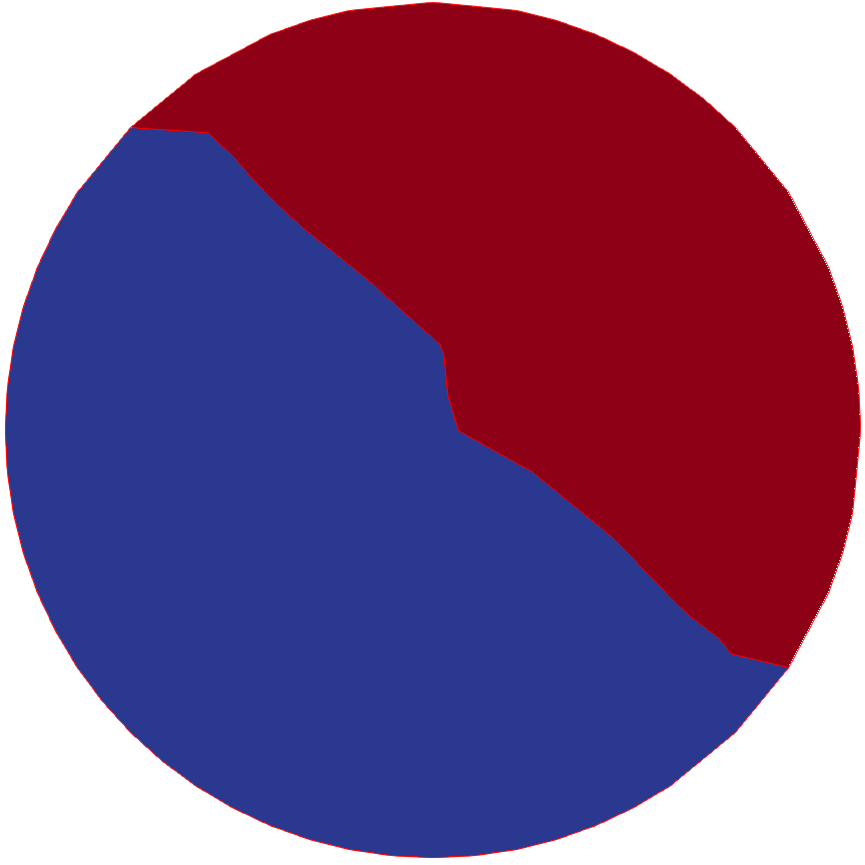}\\
\end{minipage}
\begin{minipage}[c]{0.27\textwidth}\centering
\includegraphics[trim={0 0 0 0},clip,width=4.4cm,height=4.4cm,scale=0.30]{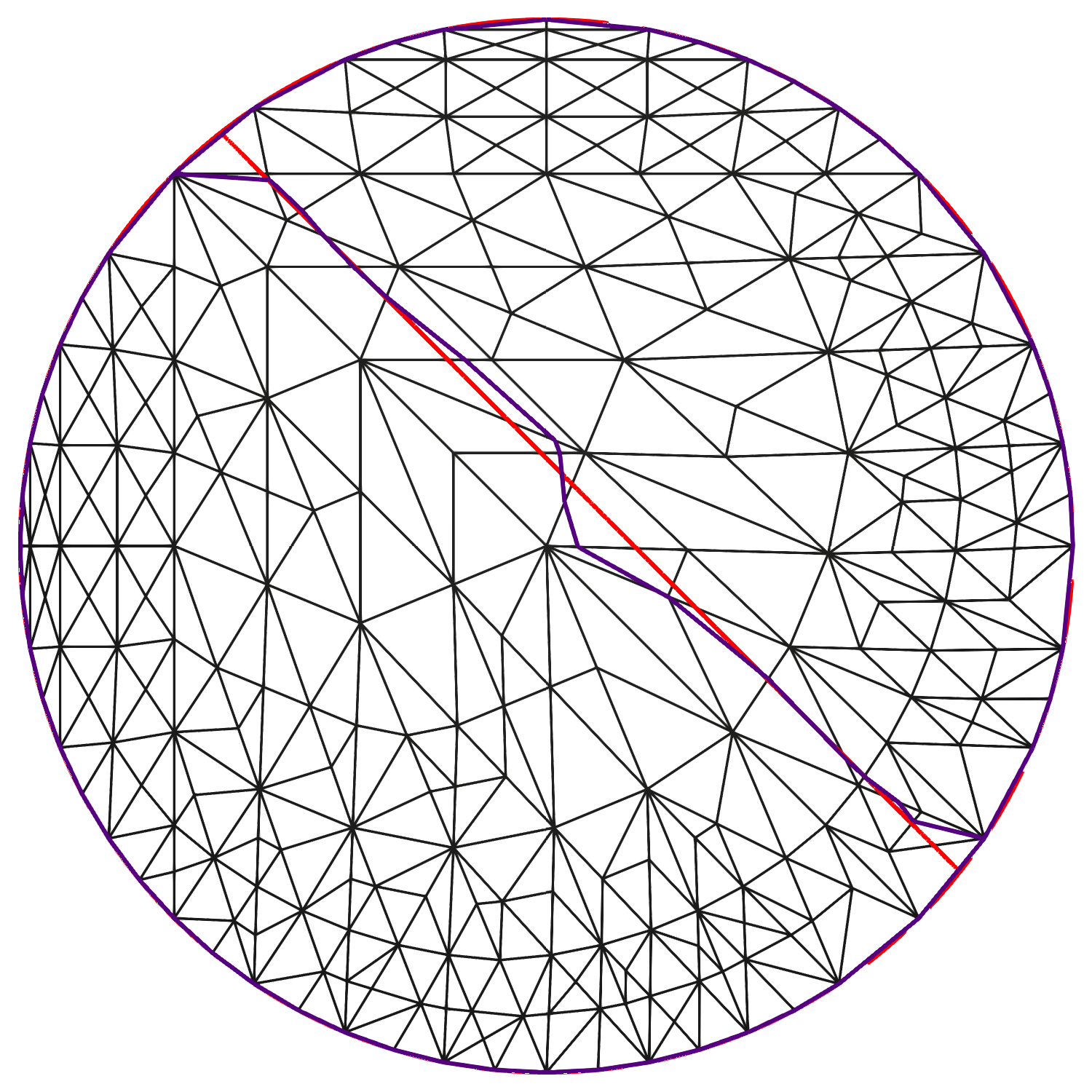}\\
\end{minipage}
\begin{minipage}[c]{0.27\textwidth}\centering
\includegraphics[trim={0 0 0 0},clip,width=4.4cm,height=4.4cm,scale=0.30]{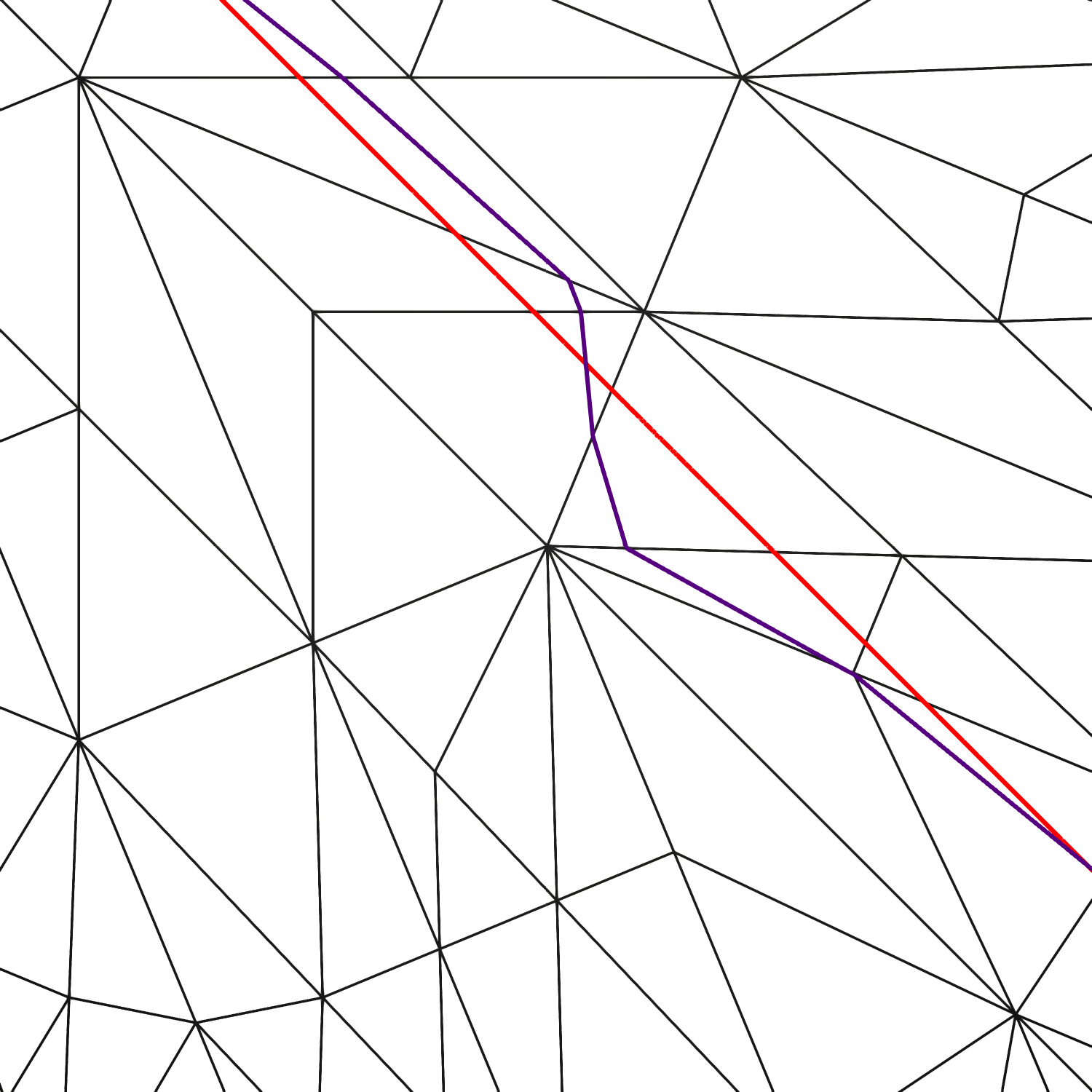}\\
\end{minipage}
\\
\begin{minipage}[c]{0.27\textwidth}\centering
\includegraphics[trim={0 0 0 0},clip,width=4.4cm,height=4.4cm,scale=0.30]{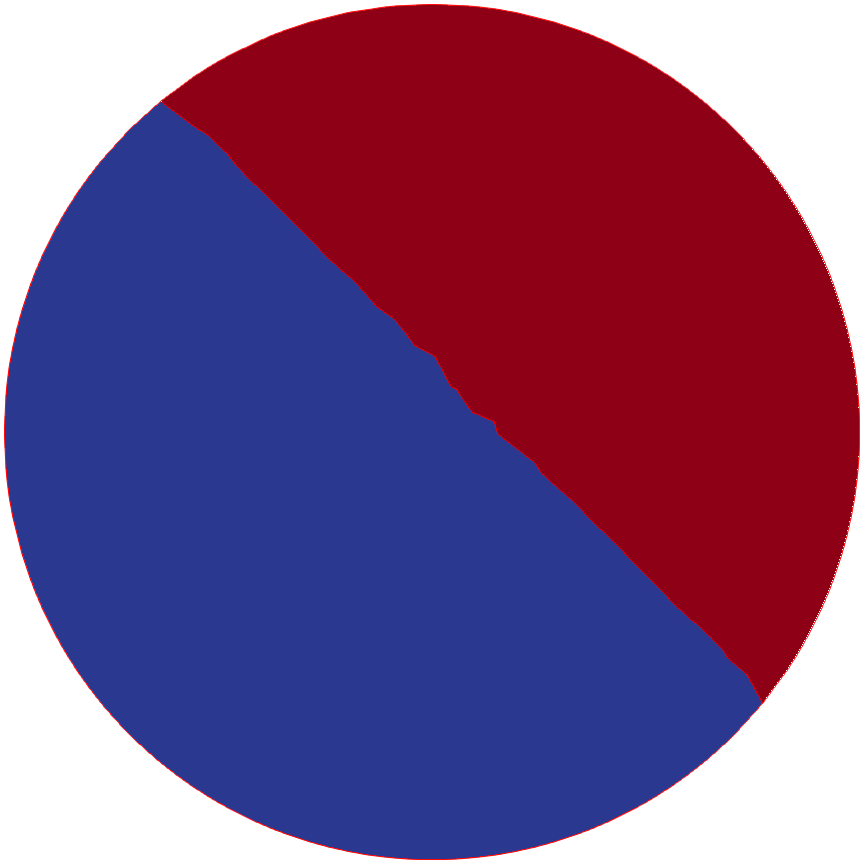}\\
\end{minipage}
\begin{minipage}[c]{0.27\textwidth}\centering
\includegraphics[trim={0 0 0 0},clip,width=4.4cm,height=4.4cm,scale=0.30]{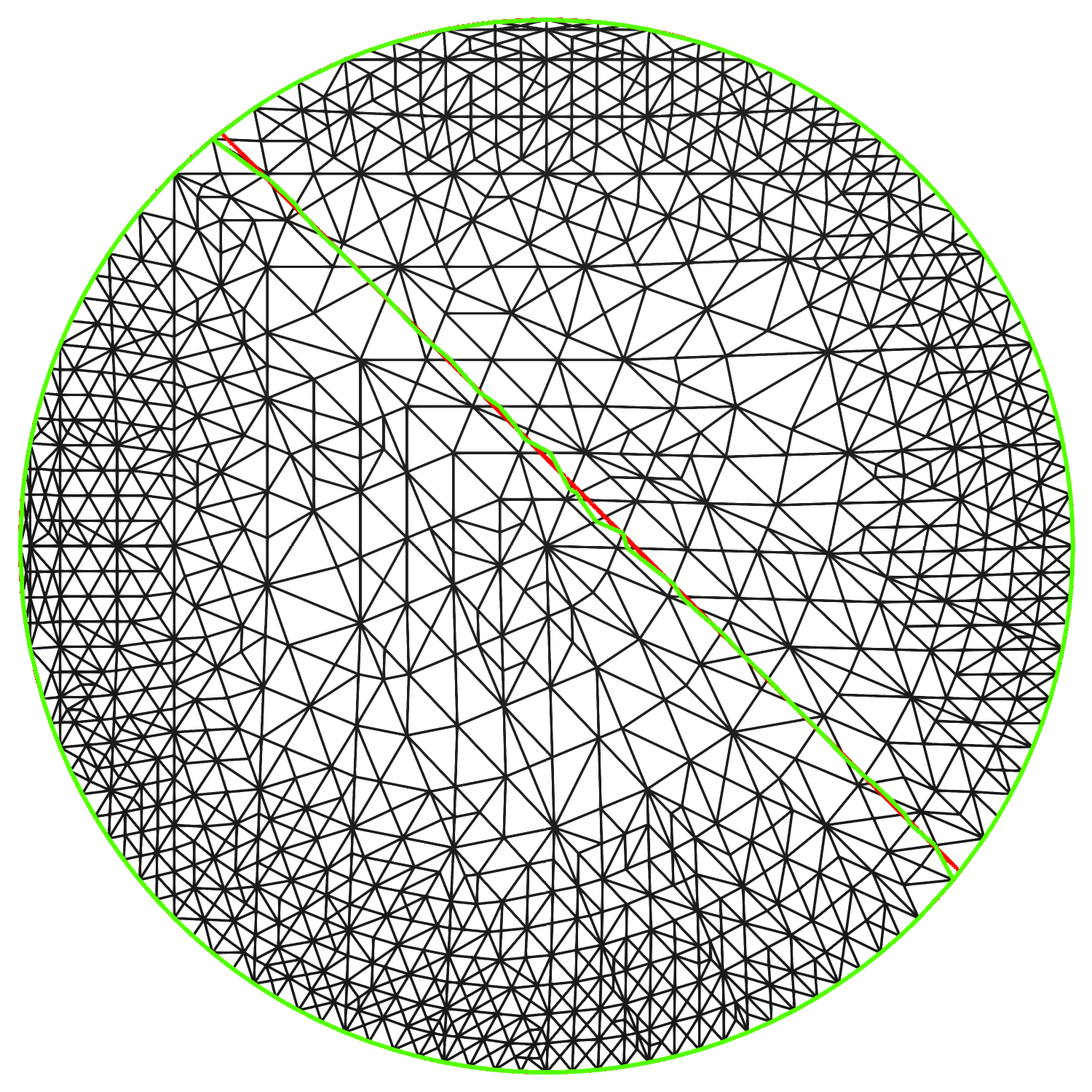}\\
\end{minipage}
\begin{minipage}[c]{0.27\textwidth}\centering
\includegraphics[trim={0 0 0 0},clip,width=4.4cm,height=4.4cm,scale=0.30]{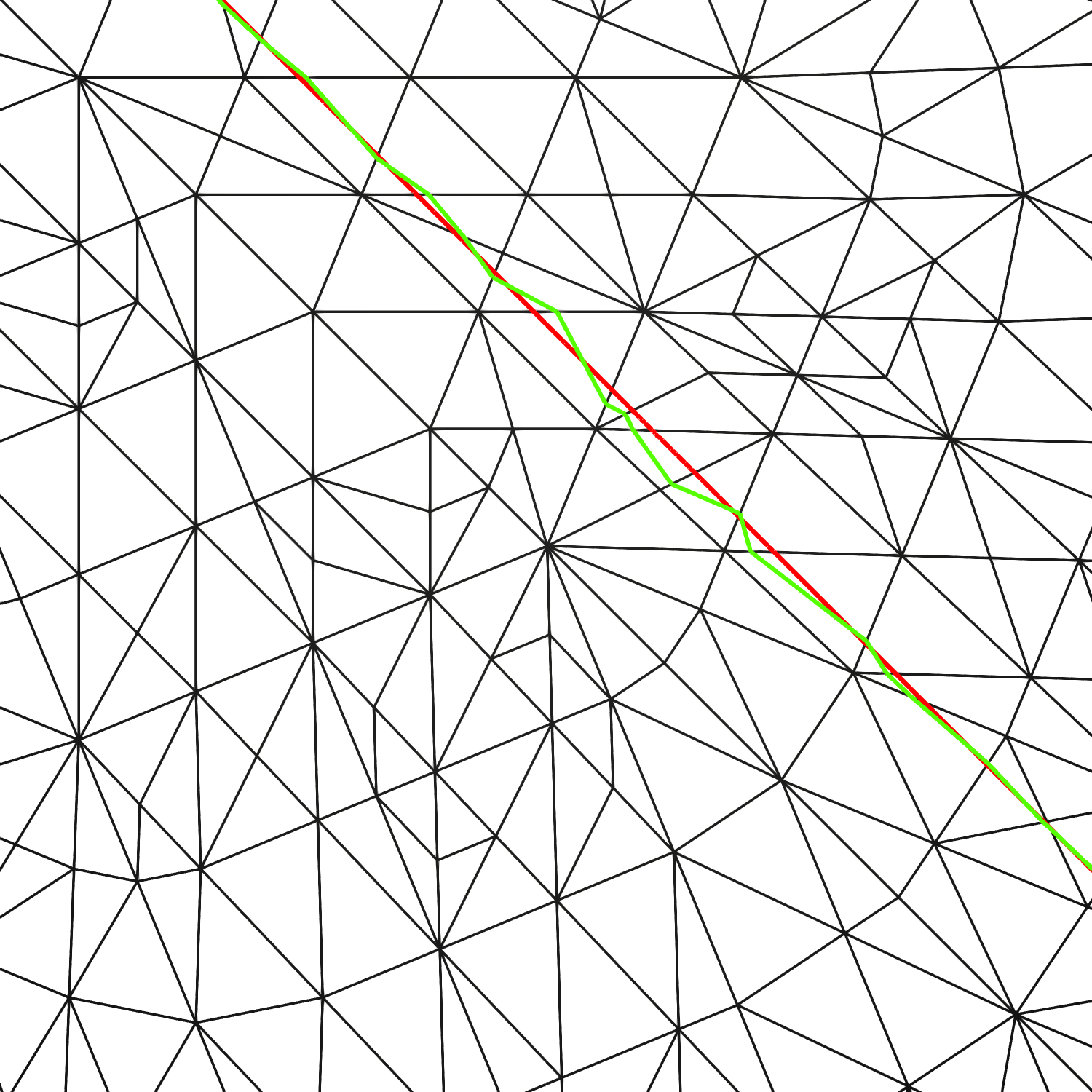}\\
\end{minipage}
\caption{Approximate control $\bar{\mathfrak{u}}_{\ell}$ and comparison of the continuous (red) and discrete switching sets on the adaptively refined meshes obtained after $5$ (upper row) and $10$ (lower row) iterations for the problem from section \ref{sec:ex_3} with $\gamma = 1$; in the red region $\bar{\mathfrak{u}}_{\ell} = 1$  whereas in the blue region $\bar{\mathfrak{u}}_{\ell} = -1$.}
\label{fig:ex_3_4}
\end{figure}


\appendix
\section{Results on the Green function}\label{sec:appendix}

A common strategy for performing error analysis for finite element approximations in the maximum norm is to represent the pointwise error by using a Green’s function.
For this reason, we include some results regarding such a function.

For each $x\in \Omega$, Green’s function $G(x, \xi) : \Omega \times \Omega \to \mathbb{R}$ is defined as the solution (in the sense of distributions) to
\begin{align}\label{def:Green_function}
    -\Delta_{\xi}G(x, \xi) = \delta(x - \xi) \quad \xi \in \Omega, \qquad G(x, \xi) = 0 \quad \xi\in \Omega.
\end{align}
Here $\delta(\cdot)$ is the $d$-dimensional Dirac $\delta$-distribution.
The following pointwise representation is derived from this definition:
\begin{align}\label{eq:Green_rep}
    w(x) = (\nabla G(x, \cdot), \nabla w)_{\Omega} \quad \text{ for any } w\in H_0^{1}(\Omega)\cap W^{1,q}(\Omega) \text{ with } q > d.
\end{align}

In what follows, we summarize some properties of Green’s function that are important for our analysis. 
For a proof, we refer to \cite[Proposition 4.1]{MR3878607} (see also \cite[Theorem 1]{MR3520007}).

\begin{theorem}[properties of $G$]\label{thm:prop_G}
Let $G$ be the Green function defined in \eqref{def:Green_function}.
If $\omega_\rho(x)$ denotes a ball of radius $\rho$ centered at $x\in \Omega$, then 
\begin{itemize}
    \item[(i)] $\nabla G\in L^{\frac{d}{d-1},\infty}(\Omega)$, which implies for every $\mathsf{p}\in [1,\nicefrac{d}{(d-1)})$ that
    \begin{align}\label{eq:DG_est_rho}
        \|\nabla G\|_{L^{\mathsf{p}}(\omega_\rho(x))} \lesssim \rho^{1-d+\frac{d}{\mathsf{p}}},
    \end{align}
    where the hidden constant depends on $\mathsf{p}$ and $d$ and blows up as $\mathsf{p}\uparrow \nicefrac{d}{(d-1)}$, and

    \item[(ii)] $G\in W^{2,1}(\Omega\setminus \omega_\rho(x))$ and satisfies
    \begin{align}\label{eq:D2G_est_rho}
        \|D^{2}G\|_{L^{1}(\Omega\setminus \omega_\rho(x))} \lesssim |\log \rho^{-1}|.
    \end{align}
\end{itemize}
\end{theorem}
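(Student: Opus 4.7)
The plan is to reduce everything to pointwise estimates for the fundamental solution of $-\Delta$ plus a lower order correction. Since $\Omega$ is convex polygonal/polyhedral, Green's function $G(x,\cdot)$ admits a decomposition $G(x,\xi) = \Phi(x-\xi) + R(x,\xi)$, where $\Phi$ is the fundamental solution of $-\Delta$ in $\mathbb R^d$ (so $\Phi(z)=c_d|z|^{2-d}$ for $d=3$ and $\Phi(z)=-c_2\log|z|$ for $d=2$), and the remainder $R(x,\cdot)\in H^1_0(\Omega)$ solves the regular Dirichlet problem
\begin{equation*}
-\Delta_\xi R(x,\xi) = 0 \text{ in } \Omega, \qquad R(x,\xi) = -\Phi(x-\xi) \text{ on } \partial\Omega.
\end{equation*}
Because $x\in\Omega$ stays a positive distance from $\partial\Omega$ only inside the bulk, one actually invokes the known maximum-principle and De~Giorgi--Nash--Moser/convex-domain elliptic regularity to conclude that $R(x,\cdot)$ is uniformly Lipschitz on compact subsets of $\Omega$ with bounds depending only on $\Omega$ and $\mathrm{dist}(x,\partial\Omega)$, together with a uniform $L^\infty$ bound. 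This is the content of the cited results \cite[Proposition 4.1]{MR3878607}, \cite[Theorem 1]{MR3520007}, where the convexity of $\Omega$ is what guarantees the absence of corner singularities degrading the decay of $\nabla G$ and $D^2 G$.

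For part (i), once the decomposition is in place, I would prove the estimate separately for $\Phi$ and $R$. Since $|\nabla \Phi(z)| \lesssim |z|^{1-d}$, a direct computation in polar coordinates yields, for any $\mathsf p\in[1,d/(d-1))$,
\begin{equation*}
\|\nabla \Phi(x-\cdot)\|_{L^{\mathsf p}(\omega_\rho(x))}^{\mathsf p} \lesssim \int_0^\rho r^{(1-d)\mathsf p} r^{d-1}\,\mathrm dr \lesssim \rho^{(1-d)\mathsf p + d},
\end{equation*}
so that $\|\nabla \Phi(x-\cdot)\|_{L^{\mathsf p}(\omega_\rho(x))} \lesssim \rho^{1-d+d/\mathsf p}$. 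The same calculation shows that the weak-$L^{d/(d-1)}$ norm of $\nabla\Phi$ is finite, which gives the claimed membership $\nabla G\in L^{d/(d-1),\infty}(\Omega)$. The contribution of $\nabla R$ is bounded on $\omega_\rho(x)$ by $\|\nabla R(x,\cdot)\|_{L^\infty(\omega_\rho(x))}|\omega_\rho(x)|^{1/\mathsf p} \lesssim \rho^{d/\mathsf p}$, which is dominated by $\rho^{1-d+d/\mathsf p}$ for small $\rho$ since $1-d\le 0$.

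For part (ii), the bound $|D^2\Phi(z)| \lesssim |z|^{-d}$ gives, in polar coordinates,
\begin{equation*}
\|D^2\Phi(x-\cdot)\|_{L^1(\Omega\setminus \omega_\rho(x))} \lesssim \int_{\rho}^{\mathrm{diam}(\Omega)} r^{-d} r^{d-1}\,\mathrm dr = \log\!\bigl(\mathrm{diam}(\Omega)/\rho\bigr) \lesssim |\log\rho^{-1}|,
\end{equation*}
while $\|D^2 R(x,\cdot)\|_{L^1(\Omega)}$ is controlled by $\|D^2 R(x,\cdot)\|_{L^2(\Omega)}|\Omega|^{1/2}$, which is finite and independent of $\rho$ by the $H^2$-regularity of the harmonic extension of a smooth boundary datum on a convex polygonal domain. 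Summing the two contributions concludes (ii).

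The main obstacle is not the explicit kernel computation, which is routine, but establishing uniform control on the regular part $R$ in the convex polygonal/polyhedral setting. On a smooth domain this is classical, but on a corner domain one has to invoke the weighted $L^p$ and Besov regularity results for the Poisson problem on convex polyhedra (e.g., Grisvard, or the more refined kernel estimates collected in \cite{MR3520007}). Since these estimates are already established in the literature and used as the starting point in \cite[Proposition 4.1]{MR3878607}, the cleanest presentation is to derive the two bounds from the decomposition above while citing these works for the needed regularity of $R$.
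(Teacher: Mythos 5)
The paper does not actually prove Theorem \ref{thm:prop_G}; it defers entirely to \cite[Proposition 4.1]{MR3878607} and \cite[Theorem 1]{MR3520007}. Your attempt to supply a self-contained argument via the decomposition $G(x,\xi)=\Phi(x-\xi)+R(x,\xi)$ is therefore a genuinely different (and more explicit) route, and the computations for the singular part are correct: the polar-coordinate integrals give exactly $\rho^{1-d+d/\mathsf p}$ in (i) and the logarithm in (ii), and the weak-$L^{d/(d-1)}$ membership of $\nabla\Phi$ is standard.

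The gap is in the treatment of the regular part $R$, and it is not cosmetic: the hidden constants in \eqref{eq:DG_est_rho} and \eqref{eq:D2G_est_rho} must be uniform in $x\in\Omega$, because in the proof of Lemma \ref{lemma:rel_adj} the pole is $x_{\mathrm M}$, the maximizer of $|\mathfrak e|$, which can sit arbitrarily close to $\partial\Omega$, and $\rho\approx\min_T h_T$ there. Your bounds on $R(x,\cdot)$ explicitly depend on $\mathrm{dist}(x,\partial\Omega)$, and this dependence is unavoidable in the naive decomposition: the boundary datum $-\Phi(x-\cdot)$ satisfies $\sup_{\xi\in\partial\Omega}|\Phi(x-\xi)|\approx \mathrm{dist}(x,\partial\Omega)^{2-d}$ (logarithmic for $d=2$), so even the maximum-principle $L^\infty$ bound on $R$, let alone the Lipschitz and $H^2$ bounds you invoke, blows up as $x\to\partial\Omega$. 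Likewise $\|D^2R(x,\cdot)\|_{L^2(\Omega)}$ is controlled by the $H^{3/2}(\partial\Omega)$ norm of $\Phi(x-\cdot)$, which is not uniform in $x$. The content of the cited results is precisely this uniformity: one proves the pointwise bounds $|G(x,\xi)|\lesssim|x-\xi|^{2-d}$ and $|\nabla_\xi G(x,\xi)|\lesssim|x-\xi|^{1-d}$ (and the $W^{2,1}$ bound via a dyadic annulus decomposition with local Caccioppoli/$H^2$ estimates, using convexity near $\partial\Omega$) with constants depending only on $\Omega$ and $d$, absorbing the boundary interaction rather than splitting it off. As written, your argument establishes the theorem only for $x$ in a fixed compact subset of $\Omega$, which is insufficient for the a~posteriori analysis; to close the gap you should either cite the uniform Gr\"uter--Widman-type kernel estimates directly (as the paper does) or carry out the annulus argument for $R$ with the boundary barrier estimates that convexity provides.
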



\bibliographystyle{siam}
\bibliography{biblio}

\end{document}